
\documentclass[10pt,reqno,a4paper]{amsart}

\oddsidemargin0.3cm
\evensidemargin0.3cm
\textwidth15.7cm
\textheight 22.5 cm
\topmargin=1cm

\usepackage{amsmath}
\usepackage{enumerate}
\usepackage{amssymb}
\usepackage{amscd}
\usepackage{amsthm}
\usepackage{amsfonts}
\usepackage{graphicx}
\usepackage{hyperref}

\usepackage{ccfonts,eulervm} 
\usepackage[T1]{fontenc}

\usepackage[utf8]{inputenc}
\setcounter{tocdepth}{1}

\bibliographystyle{amsplain}

\numberwithin{equation}{section}

\newtheorem{theorem}{Theorem}[section]

\newtheorem{corollary}{Corollary}[section]
\newtheorem{proposition}{Proposition}[section]
\newtheorem{lemma}{Lemma}[section]

\theoremstyle{definition}

\newtheorem{remark}[theorem]{Remark}

\newtheorem{problem}{Problem}




\newcommand{\cB}{\mathcal{B}}

\newcommand{\cH}{\mathcal{H}}

\newcommand{\cK}{\mathcal{K}}

\newcommand{\cM}{\mathcal{M}}

\newcommand{\cP}{\mathcal{P}}

\newcommand{\cR}{\mathcal{R}}



\newcommand{\bF}{\mathbb{F}}

\newcommand{\bP}{\mathbb{P}}

\newcommand{\bR}{\mathbb{R}}

\newcommand{\bZ}{\mathbb{Z}}





\newcommand{\ra}{\rightarrow}

\newcommand{\qand}{\quad \textrm{and} \quad}


\newcommand\subsetsim{\mathrel{%
\ooalign{\raise0.2ex\hbox{$\subset$}\cr\hidewidth\raise-0.8ex\hbox{\scalebox{0.9}{$\sim$}}\hidewidth\cr}}}
\newcommand{\eps}{\varepsilon}
\newcommand{\cmu}{\check{\mu}}


\DeclareMathOperator{\linspan}{span}
\DeclareMathOperator{\Aff}{Aff}

\DeclareMathOperator{\supp}{supp}

\DeclareMathOperator{\esssup}{ess \, sup}

\begin{document}

\title{Five remarks about random walks on groups}

\author{Michael Bj\"orklund}
\address{Department of Mathematics, Chalmers, Gothenburg, Sweden}
\email{micbjo@chalmers.se}
\thanks{}

\subjclass[2010]{Primary: 22D40; Secondary: 05C81, 11B13}

\keywords{Random walks on groups and their Poisson boundaries, product sets in groups}

\date{}

\dedicatory{}

\begin{abstract}
The main aim of the present set of notes is to give new, short and essentially
self-contained proofs of some classical, as well as more recent, 
results about random walks on groups. For instance, we shall 
see that the drift characterization of Liouville groups, due to Kaimanovich-Vershik and Karlsson-Ledrappier (and to Varopoulos in some important special cases) admits a very short and quite elementary proof. Furthermore, we give a new, and rather short proof of (a weak version of) an observation of Kaimanovich (as well as a small strengthening thereof) that the Poisson boundary of any symmetric measured group $(G,\mu)$, is doubly ergodic, and the diagonal $G$-action on its product is ergodic with unitary coefficients. We also offer a characterization of weak mixing for 
ergodic $(G,\mu)$-spaces parallel to the measure-preserving case.

We shed some new light on Nagaev's classical technique to prove central limit
theorems for random walks on groups. In the interesting special
case when the measured group admits a product current, we  
define a Besov space structure on the space of bounded harmonic 
functions with respect to which the the associated convolution operator 
is quasicompact without any assumptions on finite exponential moments. 
For Gromov hyperbolic measured groups, this gives an alternative proof of the fact that every H\"older continuous function with zero integral with respect to the unique stationary probability measure on the Gromov boundary is a co-boundary.

Finally, we give a new and almost self-contained proof of a 
special case of a recent combinatorial result about piecewise
syndeticity of product sets in groups by the author and A. Fish.
\end{abstract}

\maketitle

\section{Drifts of random walks and the Liouville property}
The study of random walks on countable groups is to a large extent concerned with the asymptotic behavior of convolution powers of 
some fixed probability measures on the groups. One is particularly
interested in the growth of the integrals of certain geometrically 
defined functions against these convolution powers. For instance, 
let $G$ be a countable group and let $\mu$ be a probability 
measure on $G$ with the property that the support of $\mu$ 
generates $G$ as a semigroup. We shall refer to $(G,\mu)$ as a
\emph{measured group}. Given a left $G$-invariant and $\mu$-integrable 
metric $d$ on $G$, we define the \emph{drift} by
\[
\ell_d(\mu) = \lim_n \frac{1}{n} \int_G d(g,e) \, d\mu^{*n}(g),
\]
where $\mu^{*n}$ denotes the $n$-th convolution power of $\mu$. An
elementary sub-additivity argument (Fekete's Lemma) guarantees that the
limit exists and is finite. From a probabilistic point of view, the drift 
$\ell_d(\mu)$ measures the asymptotic linear speed (with respect to $d$) of a sequence of products of independent and $\mu$-distributed elements in $G$. Since metrics are symmetric functions on $G \times G$, 
we always have $\ell_d(\cmu) = \ell_d(\mu)$, where $\cmu(g) = \mu(g^{-1})$ for all $g$ in $G$. We say that $\mu$ is a \emph{symmetric} probability measure (and $(G,\mu)$ is a \emph{symmetric measured group}) if $\cmu = \mu$.

We note that if $G$ is generated by a finite (symmetric) set $S$, then the word metric $d_S$ with respect to $S$ has the property that for every left $G$-invariant metric $d$, there exists a constant $C$ such that 
the inequality $d(g,e) \leq C \cdot d_S(g,e)$ holds for all $g \in G$, so in particular, if $\ell_{d_S}(\mu) = 0$ for some probability measure $\mu$, 
then $\ell_d(\mu) = 0$ as well. 

Our aim in this section is to give a characterization of those finitely 
generated and symmetric measured groups $(G,\mu)$ with 
$\ell_d(\mu) = 0$ for \emph{some} (and hence any) word-metric 
on $G$ in terms of bounded left $\mu$-harmonic functions. Recall
that a $\mu$-integrable complex-valued function $f$ on $G$ is 
\emph{left $\mu$-harmonic} if it satisfies
\begin{equation*}
\label{defharmonic}
(\cmu * f)(g) = \int_G f(sg) \, d\cmu(s), \quad \forall \, g \in G,
\end{equation*}
and it is \emph{right $\mu$-harmonic} if 
\begin{equation*}
\label{defharmonicright}
(f * \mu)(g) = \int_G f(gs) \, d\mu(s), \quad \forall \, g \in G.
\end{equation*}
If $f$ is both right \emph{and} left $\mu$-harmonic, then we say that 
$f$ is \emph{bi-$\mu$-harmonic}. Of special interest to us are the 
\emph{bounded} left $\mu$-harmonic functions on $G$. Clearly, 
constant functions on $G$ are left $\mu$-harmonic for every choice
of $\mu$, and we say that $(G,\mu)$ is (left) \emph{Liouville} if there 
are no non-constant left $\mu$-harmonic functions. Since the function 
$\check{f}(g) = f(g^{-1})$ is right $\mu$-harmonic if and 
only if $f$ is left-$\mu$-harmonic, we see that the notions of left and 
right Liouville coincide. 

The original proof of the following theorem combined a series of fundamental observations of Avez \cite{A72}, Derriennic \cite{D80}, Kaimanovich-Vershik \cite{KV83} and Karlsson-Ledrappier \cite{KL07} respectively. In the special case when $\mu$ is finitely supported, Varopoulos established this theorem in \cite{V85}. We shall give a short proof of the general theorem below.

\begin{theorem}[A geometric characterization of measured Liouville groups]
\label{drift}
Let $(G,\mu)$ be a finitely generated and symmetric measured group 
and suppose $d$ is a word metric with respect to a finite symmetric 
generating set for $G$. Then $\ell_d(\mu) = 0$ if and only if $(G,\mu)$ is 
Liouville. 
\end{theorem}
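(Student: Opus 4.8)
The plan is to route both implications through the Avez entropy $h(\mu)=\lim_n\tfrac1n H(\mu^{*n})$ (the limit existing by subadditivity of $n\mapsto H(\mu^{*n})$) and the exponential growth rate $v=\lim_n\tfrac1n\log|B_n|<\infty$ of $(G,d)$, using the Poisson boundary of $(G,\mu)$. I will take for granted the standard facts that $\tfrac1n\,d(Z_n,e)\to\ell_d(\mu)$ almost surely, where $Z_n=g_1\cdots g_n$ is the right random walk with i.i.d.\ $\mu$-distributed increments $g_i$ (Kingman's subadditive ergodic theorem, applied to the subadditive cocycle $n\mapsto d(Z_n,e)$, with the first moment finite since the drift is defined); that every $\mu$-boundary is a $G$-equivariant measurable quotient of the Poisson boundary; and that a bounded harmonic function is the Poisson integral of its boundary value $\hat f=\lim_n f(Z_n)$ (working, without loss of generality, with right-harmonic functions, since left and right Liouville coincide), this limit being measurable with respect to the tail $\sigma$-algebra $\bigcap_n\sigma(Z_n,Z_{n+1},\dots)$.

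For $\ell_d(\mu)=0\Rightarrow$ Liouville, I would first establish the fundamental inequality $h(\mu)\le \ell_d(\mu)\cdot v$ by an elementary entropy-splitting: from $H(\mu^{*n})\le H\big(d(Z_n,e)\big)+\sum_r\Pr[d(Z_n,e)=r]\log|B_r|$, using $\log|B_r|=vr+o(r)$, the bound $\tfrac1n\mathbb E[d(Z_n,e)]\to\ell_d(\mu)$, and the crude estimate $H\big(d(Z_n,e)\big)=O(\log n)$, one gets $h(\mu)\le v\,\ell_d(\mu)$, so $\ell_d(\mu)=0$ forces $h(\mu)=0$. I would then deduce Liouville from $h(\mu)=0$ using the symmetry of $\mu$: the identity $H(\mu^{*(n+1)})-H(\mu^{*n})=H(\mu)-H(g_{n+1}\mid Z_{n+1})$ gives, after telescoping, $I\big((g_{n-k+2},\dots,g_{n+1});Z_{n+1}\big)\to k\,h(\mu)=0$ for each fixed $k$; reversing the walk (for symmetric $\mu$ the reversed increments are again i.i.d.\ $\mu$, with $Z_{n+1}\mapsto Z_{n+1}^{-1}$) turns this into $I\big((g_1,\dots,g_k);Z_{n+1}\big)\to 0$, and since $I\big((g_1,\dots,g_k);(Z_n,Z_{n+1},\dots)\big)=I\big((g_1,\dots,g_k);Z_n\big)$ by the Markov property, the tail $\sigma$-algebra is independent of $(g_1,g_2,\dots)$ and hence trivial; therefore every bounded harmonic function equals its a.s.\ constant boundary value.

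For the converse, Liouville $\Rightarrow\ell_d(\mu)=0$, I would run a horofunction argument in the style of Karlsson--Ledrappier. From the trajectory one extracts, by compactness of the horofunction compactification of $(G,d)$, a measurable boundary map $\omega\mapsto b_\omega$ into normalized horofunctions ($1$-Lipschitz, vanishing at $e$); the key point --- and the main obstacle --- is to show, via the subadditive ergodic theorem, that $\tfrac1n\,b_\omega(Z_n(\omega))\to-\ell_d(\mu)$ along the \emph{full} sequence and not merely a subsequence, which forces one to build $b_\omega$ from a carefully chosen ``record'' subsequence along which $d(e,Z_n)$ grows essentially linearly. The pushforward of the trajectory measure is then a $\mu$-stationary measure on the horofunction boundary; if $(G,\mu)$ is Liouville its Poisson boundary is a point, so this measure is a Dirac mass $\delta_b$, and stationarity forces $g\cdot b=b$ for every $g$ in the semigroup generated by $\supp\mu$, i.e.\ for every $g\in G$ --- which says exactly that $b\colon G\to\mathbb R$ is a group homomorphism. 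Then $b(Z_n)=\sum_{i\le n}b(g_i)$, so by the strong law $\tfrac1n b(Z_n)\to\int_G b\,d\mu$, whence $\int_G b\,d\mu=-\ell_d(\mu)$; but $b$ is a homomorphism and $\mu$ is symmetric, so $\int_G b\,d\mu=\int_G b(g^{-1})\,d\mu(g)=-\int_G b\,d\mu=0$, forcing $\ell_d(\mu)=0$.

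The symmetry of $\mu$ is used decisively in both directions --- in the reversal step and in killing the homomorphism term --- as it must be: the conclusion fails for non-symmetric $\mu$, e.g.\ the biased nearest-neighbour walk on $\mathbb Z$, which is Liouville yet has positive drift. I expect the entropy-splitting and the reversal bookkeeping to be routine; the genuine difficulty is the ``full sequence'' strengthening in the horofunction construction, which is the technical heart of the argument.
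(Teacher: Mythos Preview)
Your proposal is correct but follows precisely the ``classical route'' that the paper explicitly describes in its remarks and then deliberately bypasses; the paper's point is to give a shorter, more elementary argument in both directions.

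For Liouville $\Rightarrow \ell_d(\mu)=0$, you invoke the horofunction compactification, a boundary map $\omega\mapsto b_\omega$, and the delicate Karlsson--Ledrappier ``full sequence'' strengthening of the subadditive ergodic theorem (which you yourself flag as the main obstacle). The paper sidesteps all of this: it simply sets
\[
u(g)=\lim_j \frac{1}{n_j}\sum_{k=0}^{n_j-1}\int_G \bigl(d(g,x)-d(x,e)\bigr)\,d\mu^{*k}(x)
\]
along a subsequence chosen by a diagonal argument, checks in two lines that $u$ is left Lipschitz and left $\mu$-quasiharmonic with $\int u\,d\check\mu=\ell_d(\mu)$, and then observes that for any such $u$ the function $x\mapsto u(xg)-u(x)$ is bounded and left $\mu$-harmonic, hence constant under the Liouville hypothesis, forcing $u$ to be a homomorphism. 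Symmetry then kills $\ell_d(\mu)$ exactly as in your last step. No compactification, no boundary map, no record-time subsequence.

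For $\ell_d(\mu)=0 \Rightarrow$ Liouville, you run the entropy argument: the fundamental inequality $h(\mu)\le v\cdot\ell_d(\mu)$ followed by a Kaimanovich--Vershik style proof (via walk reversal) that $h(\mu)=0$ implies triviality of the tail. The paper instead works directly on the Poisson boundary $(B,m)$: it shows that $c_m(g)=-\log\sigma_m(g,\cdot)$ is a cocycle $G\to L^\infty(B,m)$, that $\|c_m(g)\|_\infty$ is dominated by a constant times $d(g,e)$, and that for any cocycle $c$ and any $\mu$-harmonic mean $\lambda$ one has $\int_G\lambda(c(g))\,d\mu^{*n}(g)=n\int_G\lambda(c(g))\,d\mu(g)$; dividing by $n$ and using $\ell_d(\mu)=0$ forces $\int\!\!\int \log\frac{dg^{-1}m}{dm}\,dm\,d\mu=0$, hence $m$ is $G$-invariant and $(B,m)$ is trivial. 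Entropy never appears.

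What each buys: your route is the well-trodden one and connects to the broader entropy/growth picture (e.g.\ subexponential growth $\Rightarrow$ Liouville comes for free), but carries the horofunction technicality and the information-theoretic bookkeeping. The paper's route is strictly lighter --- Fekete, dominated convergence, Jensen, and the Poisson transform are essentially the only tools --- and makes the role of cocycles into $L^\infty(B,m)$ transparent, at the cost of being less informative about the entropy itself.
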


\begin{remark}
Recall that a countable group $G$ is \emph{amenable} if every action of 
$G$ by homeomorphims on a compact hausdorff space $X$ admits a 
$G$-invariant probability measure. It is not hard to see that finite groups
and the group of integers are amenable, and that the class of amenable 
groups is closed under extensions and direct unions, which immediately
shows that every finite extension of a solvable group is amenable. Furthermore, every finitely generated group of sub-exponential growth 
can be shown to be amenable, while free groups on at least two generators,
and countable supergroups thereof are non-amenable. 

Suppose $(G,\mu)$ is a countable \emph{non-amenable} measured group
and let $X$ denote a compact Hausdorff space, equipped with an action of 
$G$ by homeomorphisms with no $G$-invariant probability measures. A 
simple application of Kaktuani's fixed point argument shows that there is 
always a probability measure $\nu$ on $X$ which satisfies the equation
\[
\int_G \int_X \phi(g^{-1} x) \, d\nu(x) d\mu(g) = \int_X \phi(x) \, d\nu(x) 
\]
for all $\phi \in C(X)$. Since, by assumption, $\nu$ cannot be $G$-invariant, 
there exists at least one $\phi \in C(X)$ such that the function
\[
f(g) = \int_X \phi(g^{-1}x) \, d\nu(x), \quad g \in G,
\]
is \emph{not} constant. It is readily verified that $f$ is left $\mu$-harmonic
and thus $(G,\mu)$ is \emph{not} Liouville. In particular, in view of Theorem 
\ref{drift}, if $G$ is a finitely generated \emph{non-amenable} group, then $\ell_d(\mu) > 0$ for \emph{every} word-metric $d$ on $G$ and (symmetric) probability measure $\mu$.
\end{remark}

\subsection{Liouville implies zero drift}
\label{subsecLD}
The proof of Theorem \ref{drift} splits naturally into two parts. The first one 
concerns the "if"-direction, for which the relevant result can be stated as 
follows. 

\begin{theorem}[Karlsson-Ledrappier \cite{KL07}]
\label{KL}
If $(G,\mu)$ is a measured Liouville group and $d$ is a left invariant 
and $\mu$-integrable metric $d$ on $G$, then there exists a 
real-valued and $\mu$-integrable homomorphism $u$ on $G$ such that
\[
\ell_d(\mu) = \int_G u(g) \, d\cmu(g).
\]
In particular, if $\mu$ is a symmetric probability measure on $G$, then $\ell_d(\mu) = 0$ for every left invariant and $\mu$-integrable metric $d$ on $G$.
\end{theorem}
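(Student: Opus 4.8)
The plan is to produce the homomorphism $u$ as a limit of "horofunction-type" cocycles extracted from an ergodic stationary system, but kept entirely elementary by exploiting the Liouville hypothesis. First I would form the space of increments: let $(\Omega,\bP)$ be the product space $G^{\bN}$ with the measure governing the random walk $w_n = g_1 \cdots g_n$, where the $g_i$ are i.i.d. $\mu$-distributed. For a fixed left-invariant $\mu$-integrable metric $d$, define the "drift cocycle" $b_n(\omega) = d(w_n, e)$. Subadditivity of $n \mapsto d(w_n,e)$ along the cocycle (together with Kingman's subadditive ergodic theorem) gives pointwise convergence $b_n/n \to \ell_d(\mu)$ a.e., but more importantly it is the \emph{derivative} data I want: consider the functions $h_n(g) = \int_G \big(d(w, e) - d(w, g^{-1})\big)\, d\mu^{*n}(w)$, which measure how much the random walk is "pulled" in the direction of $g$. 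Each $h_n$ is bounded (by $d(g,e)$, via the triangle inequality and left-invariance), and one checks that as $n \to \infty$ the functions $\tfrac1n \sum_{k<n} h_k$ are asymptotically left $\mu$-harmonic.

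The key step is then: invoke the Liouville property to conclude that any bounded left $\mu$-harmonic function is constant, so the harmonic functions arising as limits of the $h_n$-averages (after passing to a weak-$*$ subsequential limit in $\ell^\infty(G)$, legitimate by Banach–Alaoglu) must be constant in $g$ for each fixed "base point" — and this rigidity forces the limiting object $u(g) := \lim \tfrac1n \int_G (d(w,e) - d(w,g^{-1}))\, d\mu^{*n}(w)$ to satisfy the \emph{exact} cocycle identity $u(gh) = u(g) + u(h)$, i.e. to be an honest homomorphism $G \to \bR$. Roughly: the would-be coboundary defect $u(gh) - u(g) - u(h)$ is itself (a limit of) a bounded harmonic function, hence constant, and evaluating at $e$ kills it. Integrability of $u$ follows from the uniform bound $|h_n(g)| \le d(g,e)$ and $\mu$-integrability of $d$. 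Finally the drift formula $\ell_d(\mu) = \int_G u \, d\cmu$ comes from telescoping: $d(w_n, e) = \sum_{k=1}^{n} \big(d(w_k,e) - d(w_{k-1},e)\big)$, and $d(w_k,e) - d(w_{k-1},e) = d(w_{k-1} g_k, e) - d(w_{k-1},e)$ has conditional expectation (given $w_{k-1}$) close to $\int_G (d(w_{k-1}s,e) - d(w_{k-1},e))\,d\mu(s)$; passing to the Cesàro average and using that $w_{k-1}$ equidistributes in the harmonic-measure sense, the increments average to $\int_G u(s^{-1})\,d\mu(s) = \int_G u\, d\cmu$ (the sign bookkeeping using $u(s^{-1}) = -u(s)$).

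For the "in particular" clause, once $u: G \to \bR$ is a $\mu$-integrable homomorphism and $\mu = \cmu$ is symmetric, we have $\int_G u \, d\cmu = \int_G u \, d\mu = \int_G u(g^{-1})\, d\mu(g) = -\int_G u\, d\mu$, where the middle equality is the change of variables $g \mapsto g^{-1}$ under the symmetric measure and the last uses $u(g^{-1}) = -u(g)$; hence $\int_G u\, d\mu = 0$ and therefore $\ell_d(\mu) = 0$. This part is immediate modulo the main theorem.

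The main obstacle I anticipate is making the "asymptotically harmonic $\Rightarrow$ the limit is an exact homomorphism" step fully rigorous: one must control the weak-$*$ limits so that the harmonicity passes cleanly to the limit, verify that the limit does not depend on the subsequence (so that $u$ is well-defined, not merely defined along a net), and confirm the cocycle/homomorphism identity holds \emph{everywhere} on $G$ rather than almost everywhere with respect to some measure — here the countability of $G$ and the boundedness of each $h_n(\cdot)$ on the finite generating set are what save us, since a homomorphism is determined by finitely many values and the defining relations are finite in number per relator. A secondary technical point is the interchange of limit and integral in the telescoping computation of $\ell_d(\mu)$, which I would justify by dominated convergence using the integrable dominating function $g \mapsto d(g,e) + d(g^{-1},e)$ inherited from $\mu$-integrability of $d$.
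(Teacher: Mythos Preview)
Your proposal is correct and follows essentially the same route as the paper: form $u$ as a subsequential Ces\`aro limit of the horofunction-type integrals $\int_G (d(g,x)-d(x,e))\,d\mu^{*k}(x)$, observe that this limit is left Lipschitz and left quasi-$\mu$-harmonic with $\int u\,d\cmu = \ell_d(\mu)$ by telescoping, and then invoke Liouville on the bounded left-harmonic defect $x\mapsto u(xg)-u(x)$ (equivalently on $u(gh)-u(g)-u(h)$) to force the homomorphism identity. Your anticipated obstacles are not real: any subsequential limit works (independence of subsequence is neither needed nor claimed), finite generation is nowhere required since countability of $G$ already gives pointwise limits by diagonalization, and the homomorphism identity holds for every $g,h\in G$ directly because the defect is an everywhere-defined bounded harmonic function.
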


Let us now outline a simple proof of this theorem, which has some similarities with the 
arguments in the paper \cite{EK10}, where are more quantitative version of Theorem 
\ref{KL} is proved. We first note that by the triangle-inequality, the sequence
\[
n \mapsto \int_G \big( d(g,x) - d(x,e) \big) \, d\mu^{*k}(x) 
\]
is bounded for every fixed $g$ in $G$, so by a simple diagonal argument, there exists a sub-sequence $(n_j)$ such that the limit 
\[
u(g) = \lim_{j \ra \infty} \frac{1}{n_j} \sum_{k=0}^{n_j-1}
\int_G \big( d(g,x) - d(x,e) \big) \, d\mu^{*k}(x) 
\]
exists for all $g \in G$ and thus, 
\[
\int_G u(sg) \, d\cmu(s) = u(g) + \int_G u(s) \, d\cmu(s), \quad \forall \, g \in G.
\]
We shall refer to functions $u$ with this property as \emph{left quasi-$\mu$-harmonic}. By dominated convergence, we have
\begin{eqnarray*}
\int_G u(s) \, d\cmu(s) 
&=&
\lim_{j \ra \infty} \frac{1}{n_j} \sum_{k=0}^{n_j-1}
\Big(
\int_G d(x,e) \, d\mu^{*(k+1)}(x) - \int_G d(x,e) \, d\mu^{*k}(x)  
\Big) \\
&=& 
\lim_{j \ra \infty} \frac{1}{n_j} 
\int_G d(x,e) \, d\mu^{*n_j}(x)
=
\ell_d(\mu).
\end{eqnarray*}
Furthermore, the triangle inequality guarantees that  the function $u$ is left 
Lipschitz, i.e. for every element $g$ in $G$, we have
\[
\sup_{x \in G} |u(xg) - u(x)| < + \infty. 
\]
The theorem of Karlsson-Ledrappier is now an immediate consequence of 
the following proposition, which is interesting in its own right.

\begin{proposition}
If $(G,\mu)$ is a Liouville measured group, then every left Lipschitz 
and left $\mu$-quasi-harmonic function on $G$ which vanishes at 
the identity is a homomorphism. 
\end{proposition}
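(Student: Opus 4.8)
The plan is to fix $g \in G$ and to examine the difference function $v_g(x) = u(xg) - u(x)$, showing that it is a bounded left $\mu$-harmonic function and therefore constant by the Liouville hypothesis; reading off the value of this constant at the identity then yields the homomorphism property.

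First I would note that $v_g$ is bounded: this is precisely the left Lipschitz assumption $\sup_{x \in G} |u(xg) - u(x)| < \infty$, and since $\mu$ is a probability measure, boundedness forces $v_g$ to be $\mu$-integrable as well. The heart of the argument is then to check that $v_g$ is left $\mu$-harmonic. Expanding
\[
\int_G v_g(sx) \, d\cmu(s) = \int_G u(s(xg)) \, d\cmu(s) - \int_G u(sx) \, d\cmu(s)
\]
and applying the left quasi-$\mu$-harmonicity of $u$ to each term — once with $xg$ and once with $x$ playing the role of the free variable — the two copies of the additive constant $\int_G u(s) \, d\cmu(s)$ cancel, and what remains is exactly $u(xg) - u(x) = v_g(x)$. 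The interchange of the integral with the finite difference is harmless here, since $u(\cdot\,xg)$ and $u(\cdot\,x)$ are both $\cmu$-integrable.

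Now the Liouville property applies: $v_g$ is a bounded, hence $\mu$-integrable, left $\mu$-harmonic function, so it is constant. Evaluating at $x = e$ and using $u(e) = 0$ gives $v_g \equiv v_g(e) = u(g)$, i.e. $u(xg) = u(x) + u(g)$ for all $x \in G$; since $g \in G$ was arbitrary, $u$ is a homomorphism. I do not anticipate a genuine obstacle here — essentially all of the work is done by the cancellation of the quasi-harmonic constant, after which the Liouville hypothesis finishes the argument. The only point deserving a word of care is the licence to specialize the free variable in the quasi-harmonicity identity to the element $xg$, which is immediate from its stated form.
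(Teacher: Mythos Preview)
Your proof is correct and follows essentially the same approach as the paper: define $v_g(x)=u(xg)-u(x)$, observe it is bounded (left Lipschitz) and left $\mu$-harmonic (the quasi-harmonic constants cancel), invoke Liouville to conclude it is constant, and evaluate at $x=e$. The paper's version is slightly terser, leaving the cancellation of the quasi-harmonic constant implicit, but the argument is the same.
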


\begin{proof}
Note that if $u$ is a left Lipschitz and left $\mu$-quasi-harmonic 
function on $G$, then for every $g \in G$, the function
\[
v_g(x) = u(xg) - u(x), \quad x \in G,
\]
is a \emph{bounded} left $\mu$-harmonic function on $G$, and 
hence constant. Since $u(e) = 0$, we conclude that
\[
u(xg) - u(x) = u(g)
\]
for all $g, x \in G$, that is to say, $u$ is a homomorphism.
\end{proof}

\begin{remark}
In particular, a Liouville group $(G,\mu)$ without homomorphisms 
into the additive group of the real numbers cannot admit any non-constant
\emph{left Lipschitz} and \emph{left (quasi) $\mu$-harmonic} functions. 
However, we shall see in Appendix I, that \emph{every} countable symmetric measured group $(G,\mu)$ always admits a non-constant \emph{left Lipschitz} and
\emph{right $\mu$-harmonic} function. 
\end{remark}

\subsection{Zero drift implies Liouville}

We now tend to the proof of the "only if"-direction in Theorem \ref{drift}, for
which the relevant result can be stated as follows. 

\begin{theorem}[Avez, Derriennic, Kaimanovich-Vershik, weaker form]
\label{KV}
Let $(G,\mu)$ be a finitely generated measured group and suppose $d$ is 
a left-invariant word metric on $G$ with respect to a finite symmetric generating set. If $\ell_d(\mu) = 0$, then $(G,\mu)$ is Liouville. 
\end{theorem}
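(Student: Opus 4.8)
The plan is to obtain the ``only if'' direction by routing the argument through the \emph{Avez entropy} $h(\mu) := \lim_n \frac1n H(\mu^{*n})$, where $H(\nu) = -\sum_g \nu(g)\log\nu(g)$; the limit exists by subadditivity of $n\mapsto H(\mu^{*n})$, and it is finite because a word metric is proper and $\mu$-integrable (the sphere of radius $k$ has at most $C^k$ elements, so the word-length distribution has finite entropy, and conditioning on it leaves at most $k\log C$ of entropy on the $k$-th sphere; summing against $\int_G d(g,e)\,d\mu(g)<\infty$ gives $H(\mu)<\infty$, hence $h(\mu)<\infty$). The proof then splits into: \textbf{(i)} $\ell_d(\mu)=0$ implies $h(\mu)=0$; and \textbf{(ii)} $h(\mu)=0$ implies that $(G,\mu)$ is Liouville.

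For (i), the function $n\mapsto\int_G d(g,e)\,d\mu^{*n}(g)$ is subadditive (triangle inequality, left-invariance, independence of increments), so Fekete's lemma gives $\int_G d(g,e)\,d\mu^{*n}(g)=\ell_d(\mu)\,n+o(n)$, which is $o(n)$ under our hypothesis. Decomposing $\mu^{*n}$ over the annuli $A_k^{(n)}=\{k\le d(\cdot,e)<k+1\}$: the entropy of the annulus-index is $O(\log n)$ (a nonnegative integer-valued random variable of mean $o(n)$ has entropy $O(\log n)$), while the conditional entropy of $\mu^{*n}$ on $A_k^{(n)}$ is at most $\log|B(e,k+1)|\le C(k+1)$, whence
\[
H(\mu^{*n}) \;\le\; O(\log n) + C\sum_k (k+1)\,\mu^{*n}\big(A_k^{(n)}\big) \;=\; O(\log n) + C\Big(\textstyle\int_G d(g,e)\,d\mu^{*n}(g)+1\Big) \;=\; o(n),
\]
so $h(\mu)=0$. (This is Guivarc'h's fundamental inequality $h(\mu)\le\ell_d(\mu)\cdot v$, $v$ the logarithmic volume growth, specialized to $\ell_d(\mu)=0$.)

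The substance is (ii), a reproof of the Kaimanovich--Vershik/Derriennic entropy criterion. Let $X_1,X_2,\dots$ be i.i.d.\ with law $\mu$, let $Z_n=X_1\cdots X_n$, and let $\cT=\bigcap_n\sigma(Z_n,Z_{n+1},\dots)$ be the tail $\sigma$-algebra. Since $(X_1,\dots,X_n)$ determines $Z_n$, we have $H(Z_n)=nH(\mu)-H(X_1,\dots,X_n\mid Z_n)$; the chain rule together with the Markov property and time-homogeneity rewrites $H(X_1,\dots,X_n\mid Z_n)=\sum_{j=1}^n H(X_1\mid Z_j)$; and, again by the Markov property, $X_1$ is conditionally independent of $(Z_{j+1},Z_{j+2},\dots)$ given $Z_j$, so $H(X_1\mid Z_j)=H\big(X_1\mid\sigma(Z_j,Z_{j+1},\dots)\big)$, which decreases to $H(X_1\mid\cT)$ by reverse martingale convergence (here $H(\mu)<\infty$ is used). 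Dividing by $n$ and letting $n\to\infty$ yields
\[
h(\mu) \;=\; H(\mu) - H(X_1\mid\cT).
\]
Hence $h(\mu)=0$ forces $X_1$ to be independent of $\cT$; a translation argument (conditioning on $Z_{k-1}$, and using that $\cT$ is insensitive to the first $k-1$ increments) upgrades this to $(X_1,\dots,X_n)\perp\cT$ for every $n$, so that $\cT$, being contained in $\sigma(X_1,X_2,\dots)$, is independent of itself and thus trivial. Finally, the standard identification of the bounded right $\mu$-harmonic functions with $L^\infty$ of the tail $\cT$, combined with the equivalence of left- and right-Liouville already noted in the text, shows that $(G,\mu)$ is Liouville; together with (i) this proves the theorem. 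I expect the delicate points to be exactly those of part (ii): the conditional-entropy bookkeeping and the reverse-martingale passage to the limit with only finite (not bounded) entropy of $\mu$, the promotion of single-coordinate independence from $\cT$ to joint independence, and the identification of the tail with the space of bounded harmonic functions.
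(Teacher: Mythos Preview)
Your argument is correct and is exactly the classical entropy route that the paper summarizes in the Remark immediately following the statement: the fundamental inequality $h(\mu)\le v\cdot\ell_d(\mu)$ for step~(i), and the Kaimanovich--Vershik criterion $h(\mu)=0\Leftrightarrow$ Liouville for step~(ii). The paper, however, explicitly sets out to give a \emph{different}, entropy-free proof.

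The paper's argument works directly on the Poisson boundary $(B,m)$. It first shows that the Radon--Nikodym cocycle $\sigma_m(g,\cdot)$ is bounded above and below, so that $c_m(g)=-\log\sigma_m(g,\cdot)$ is a cocycle $G\to L^\infty(B,m)$ with $\|c_m(g)\|_\infty$ dominated by the word length $d(g,e)$. For any $\mu$-harmonic mean $\lambda$ on $L^\infty(B,m)$ the cocycle identity gives $\int_G\lambda(c_m(g))\,d\mu^{*n}(g)=n\int_G\lambda(c_m(g))\,d\mu(g)$; dividing by $n$ and using $\ell_d(\mu)=0$ forces the right-hand side to vanish. Taking $\lambda=m$ yields $\int_B\int_G\log\sigma_m(g,b)\,d\mu(g)\,dm(b)=0$, and Jensen's inequality (against the harmonicity identity $\int_G\sigma_m(g,b)\,d\mu(g)=1$) then shows $m$ is $G$-invariant, so the boundary is trivial.

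What each approach buys: yours makes the role of entropy explicit and recovers the quantitative inequality $h\le v\cdot\ell$, but it requires the conditional-entropy bookkeeping, the reverse-martingale passage, the promotion of $X_1\perp\cT$ to triviality of $\cT$ (which does work, e.g.\ by showing $H(X_1,\dots,X_n\mid Z_m)=\sum_{j=1}^n H(X_1\mid Z_{m-j+1})\to nH(\mu)$ as $m\to\infty$), and the identification of harmonic functions with the tail---precisely the points you flag. The paper's route sidesteps all of this: once the Poisson boundary is granted, the proof is a short cocycle computation plus Jensen, with no entropy, no martingales, and no tail $\sigma$-algebra; the cost is that it takes the existence and basic properties of $(B,m)$ as a black box and yields no quantitative by-product.
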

\begin{remark}
The classical route to this theorem employs the entropy theory of measured
groups. One first shows that if $\ell_d(\mu) < +\infty$ for some (and hence
any) word metric $d$ on $G$, then the limit 
\[
h(G,\mu) 
= 
\lim_{n \ra \infty}
-\frac{1}{n} \sum_{g \in G} \mu^{*n}(g) \cdot \log \, \mu^{*n}(g) 
\]
exists and is finite. We refer to $h(G,\mu)$ as the (Avez) \emph{entropy}
of the measured group $(G,\mu)$, and the main result of Kaimanovich-Vershik in \cite{KV83} asserts (under the assumption that $\mu$ is symmetric and the Avez entropy is finite) that $h(G,\mu) = 0$ if and only if $(G,\mu)$ is 
Liouville. One is thus left with the task of showing that $\ell_d(\mu) = 0$ implies $h(G,\mu) = 0$. This is taken care of what is sometimes referred to as the
"fundamental" inequality (see e.g. Section 4 in \cite{KL07}), which we now formulate. Let $S$ be a finite and symmetric generating set for $G$ and let $d$ be the word metric associated to $S$. If one defines the \emph{exponential volume growth} 
of $(G,S)$ by
\[
v(G,S) = \varlimsup_{n \ra \infty} \frac{\log |S^n|}{n} \leq \log|S|
\]
then
\begin{equation}
\label{fund}
h(G,\mu) \leq v(G,S) \cdot \ell_d(\mu) = 0,
\end{equation}
which finishes the (classical) proof of Theorem \ref{KV}. Note that the
argument gives a bit more, namely that if $G$ has subexponential 
growth, i.e. $v(G,S) = 0$, then $h(G,\mu) = 0$ and thus $(G,\mu)$
is Liouville. By Theorem \ref{KL} and the remark following its statement, 
we conclude that $G$ is amenable and $\ell_d(\mu)  = 0$. 
\end{remark}

We shall now give a new alternative (and self-contained) proof of Theorem \ref{KV} which avoids the use of entropy theory. Let $(G,\mu)$ be a 
measured group and denote by $\cH^\infty_l(G,\mu)$ the space of all bounded left $\mu$-harmonic functions on $G$. We say that a Borel
probability space $(X,\nu)$ is a \emph{$(G,\mu)$-space} if $X$ is equipped 
with an action of $G$ by bi-measurable maps, which all preserve the
measure class of $\nu$, such that
\[
\int_G \int_X \phi(g^{-1}x) \, d\nu(x) \, d\mu(g) = \int_X \phi(x) \, d\nu(x)
\]
for all $\phi \in L^\infty(X,\nu)$. Probability measures with this property
are often referred to as \emph{$\mu$-harmonic} (or \emph{$\mu$-stationary}). Given a 
$\mu$-harmonic probability measure $\nu$ on $X$, one readily checks that the association 
\[
P_\nu\phi(g) = \int_X \phi(g^{-1}x) \, d\nu(x), \quad g \in G.
\]
defines an element in $\cH^\infty_l(G,\mu)$ for every $\phi \in L^\infty(X,\nu)$. We note that if $\nu$ is $G$-invariant, then such elements are all 
constants. A remarkable fact, often attributed to Furstenberg, is that every
measured group $(G,\mu)$ admits a $(G,\mu)$-space $(B,m)$, which we
shall refer to to as the \emph{Poisson boundary of $(G,\mu)$}, 
for which the linear map $P_m$ above is in fact \emph{isometric and onto} 
$\cH^\infty_l(G,\mu)$. There are many constructions of the Poisson boundary
of a measured group in the literature. We refer the reader to \cite{Fu} for a 
detailed exposition of one of the more elementary constructions.

\begin{proposition}[Furstenberg]
\label{furstenberg}
For every measured group $(G,\mu)$ there exists an ergodic $(G,\mu)$-space $(B,m)$, which we shall refer to as the Poisson boundary of $(G,\mu)$, such that the Poisson transform $P : L^\infty(B,m) \ra \cH_l^{\infty}(G,\mu)$ defined by
\[
P\phi(g) = \int_B \phi(g^{-1}b) \, dm(b), \quad g \in G,
\] 
is an isometric isomorphism. In particular, $(B,m)$ is trivial if and only
if $m$ is $G$-invariant.
\end{proposition}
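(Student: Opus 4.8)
The plan is to realize $(B,m)$ as the exit space of the $\mu$-random walk on $G$ and to read off every assertion from the martingale convergence theorem. Let $\Omega=G^{\bN_0}$ be the trajectory space of the random walk $(Z_n)_{n\ge 0}$ with step distribution $\cmu$, i.e.\ $Z_{n+1}=a_{n+1}Z_n$ with $a_1,a_2,\dots$ independent and $\cmu$-distributed; this is precisely the choice making $n\mapsto f(Z_n)$ a martingale for every $f\in\cH_l^\infty(G,\mu)$, by the defining relation $f(g)=\int_G f(s^{-1}g)\,d\mu(s)$. Write $\bP_g$ for the law of this walk started at $g$ and set $\bP:=\bP_e$; let $\theta$ be the time shift on $\Omega$ and let $\cA\subseteq\cB(\Omega)$ be the $\sigma$-algebra of \emph{strictly} $\theta$-invariant Borel sets. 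I would then let $(B,m)$ be the point realization (in the sense of Mackey) of the measure algebra of $(\Omega,\cA,\bP)$, with quotient map $\beta\colon\Omega\to B$ and $m:=\beta_*\bP$. Since trajectories started at two different points differ by a translation, and translations commute with $\theta$, the natural $G$-action on $\Omega$ descends to a $G$-action on $B$; the hypothesis that $\supp\mu$ generates $G$ as a semigroup enters through the identity $\bP(A)=\int_G\bP_g(A)\,d\cmu^{*n}(g)$, valid for $A\in\cA$ because $\theta^{-n}A=A$, which forces $\bP_g$ and $\bP$ to be equivalent on $\cA$ for every $g$, so the $G$-action preserves the measure class of $m$; the case $n=1$ is exactly the $\mu$-stationarity of $m$, so $(B,m)$ is a $(G,\mu)$-space. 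Finally, unwinding the definition of the $G$-action yields, for each $\phi\in L^\infty(B,m)$ and $g\in G$, the formula $\int_B\phi(g^{-1}b)\,dm(b)=\bE_{\bP_g}[\phi\circ\beta]$, and taking $g=Z_n$ together with the Markov property gives $P\phi(Z_n)=\bE[\phi\circ\beta\mid\sigma(Z_0,\dots,Z_n)]$.

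Granting this set-up, I would argue that $P$ is an isometric isomorphism as follows. That $P$ maps into $\cH_l^\infty(G,\mu)$ and that $\|P\phi\|_\infty\le\|\phi\|_\infty$ are routine from $\mu$-stationarity of $m$. For the reverse inequality, the martingale $\bigl(P\phi(Z_n)\bigr)_n$ just identified is bounded, hence converges $\bP$-almost surely and in $L^1$ to $\phi\circ\beta$; taking absolute values and then suprema over $n$ gives $\|\phi\|_\infty\le\|P\phi\|_\infty$, so $P$ is isometric and in particular injective. For surjectivity, let $f\in\cH_l^\infty(G,\mu)$; left $\mu$-harmonicity is precisely the statement that $\bigl(f(Z_n)\bigr)_n$ is a bounded martingale, so it converges $\bP$-a.s.\ and in $L^1$ to a bounded limit $\hat f$, and since $\hat f=\lim_n f(Z_{n+1})$ as well, $\hat f$ is $\theta$-invariant, hence of the form $\phi\circ\beta$ for a unique $\phi\in L^\infty(B,m)$. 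Because $\bP_g\ll\bP$ on $\cA$ and both $\hat f$ and the convergence event lie in $\cA$, the convergence also holds $\bP_g$-a.s.\ and in $L^1(\bP_g)$, so passing to the limit in $\bE_{\bP_g}[f(Z_n)]=f(g)$ yields $f(g)=\bE_{\bP_g}[\hat f]=\bE_{\bP_g}[\phi\circ\beta]=P\phi(g)$; thus $P$ is onto.

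The remaining assertions are then formal. If $\phi\in L^\infty(B,m)$ is $G$-invariant, then $P\phi(g)=\int_B\phi(g^{-1}b)\,dm(b)=\int_B\phi\,dm$ is constant, so $\phi$ is constant by injectivity of $P$; hence $(B,m)$ is ergodic. If moreover $m$ is $G$-invariant, the same computation makes \emph{every} $P\phi$ constant, so $\cH_l^\infty(G,\mu)=\bC$ and therefore, $P$ being onto and isometric, $L^\infty(B,m)=\bC$, i.e.\ $(B,m)$ is trivial; the converse is immediate. The step I expect to be the main obstacle is the construction in the first paragraph: the time shift $\theta$ is \emph{not} measure-class-preserving on all of $\Omega$, so one must work with the genuinely invariant $\sigma$-algebra $\cA$, invoke Mackey's point-realization theorem to turn $(\Omega,\cA,\bP)$ with its translation $G$-action into a bona fide $(G,\mu)$-space on a standard Borel probability space, and check Borel measurability both of the induced $G$-action on $(B,m)$ and of the associated Radon--Nikodym cocycle $g\mapsto\tfrac{d\bP_g}{d\bP}$ on $\cA$; alternatively one can substitute an explicit model of the exit space, such as the one in \cite{Fu}.
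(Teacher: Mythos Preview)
Your construction is correct and is one of the standard routes to the Poisson boundary---the path-space model via the shift-invariant $\sigma$-algebra, with the martingale convergence theorem supplying both the isometry and the surjectivity of $P$. The paper, however, does \emph{not} prove this proposition: it merely states the result and refers the reader to \cite{Fu} for a construction, so there is no in-paper argument to compare yours against. Your identification of the main technical obstacle (passing from the measure algebra $(\Omega,\cA,\bP)$ to a genuine standard Borel $(G,\mu)$-space via Mackey's point-realization, and checking measurability of the induced action and the Radon--Nikodym cocycle) is accurate; these are real but well-documented technicalities, and your alternative of citing an explicit model as in \cite{Fu} is exactly what the paper itself does.
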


\begin{remark}
As was pointed out by Jaworski in \cite{Ja94}, the Poisson boundary $(G,m)$
is strongly approximately transitive (SAT), i.e. for every measurable subset 
$A \subset B$ of positive $m$-measure and for every $\eps > 0$, there 
exists $g \in G$ such that $m(gA) > 1 - \eps$. Indeed, since $P$ is 
isometric, if $A \subset B$ has positive $m$-measure, then
\[
\sup_{g \in G} P\chi_A(g) = \sup_{g \in G} \int_B \chi_A(g^{-1}b) \, dm(b) = 
\sup_{g \in G} m(gA) = 1,
\]
from which the SAT-property follows.
\end{remark}

Since the $G$-action on $(B,m)$ preserves the measure class of $m$
(i.e. the set of all null-sets for $m$), the Radon-Nikodym derivative 
\[
\sigma_m(g,b) = \frac{dg^{-1}m}{dm}(b)
\]
is a well-defined non-negative element in $L^1(B,m)$ for every $g \in G$, and one
readily checks that
\begin{equation}
\label{multcocycle}
\sigma_m(g_1g_2,b) =  \sigma_m(g_1,b) \, \sigma_m(g_2,g_1b)
\end{equation}
for all $g_1, g_2 \in G$ and for almost every $b$ with respect to $m$, 
that is to say $\sigma_m$ is a multiplicative cocycle for the $G$-action
on $(B,m)$. A crucial feature with this cocycle is its $\mu$-harmonicity, namely
\begin{equation}
\label{cocycleharm}
\int_G \sigma_m(g,b) \, d\mu^{*n}(g) = 1, \quad \textrm{for a.e. $b$ and for all $n \geq 1$}.
\end{equation}
Indeed, for every $\phi \in L^\infty(B,m)$, one has
\[
\int_B \Big( \int_G \sigma_m(g,b) \, d\mu^{*n}(g) \Big) \, \phi(b) \, dm(b) 
= \int_G \int_B \phi(g^{-1}b) \, dm(b) \, d\mu^{*n}(g) = \int_B \phi \, dm,
\]
for all $n$, which immediately yields \eqref{cocycleharm}. \\

After a simple use of Jensen's inequality and \eqref{cocycleharm}, Theorem 
\ref{KV}  can now be reformulated as follows.

\begin{theorem}[Kaimanovich-Vershik, weak version]
Let $(G,\mu)$ be a finitely generated measured group with Poisson boundary $(B,m)$ and suppose $\ell_d(\mu) = 0$ for some (and hence any) word-metric with respect to a finite symmetric generating set. Then $m$ is 
$G$-invariant, or equivalently
\[
\int_G \int_B \log \frac{dg^{-1}m}{dm}(b) \, dm(b) \, d\mu(g) = 0.
\]
\end{theorem}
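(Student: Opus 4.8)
The plan is to exploit the interplay between the additive cocycle $\log \sigma_m(g,b)$, the word metric $d$, and the zero-drift hypothesis. First I would observe that for a finite symmetric generating set $S$, the Radon--Nikodym cocycle satisfies a Lipschitz-type bound: there is a constant $C = C(S,m) > 0$ with $|\log \sigma_m(s,b)| \leq C$ for all $s \in S$ and a.e.\ $b$, hence by the cocycle identity \eqref{multcocycle} we get $|\log \sigma_m(g,b)| \leq C \cdot d(g,e)$ for all $g \in G$ and a.e.\ $b$. (Strictly speaking one needs to check that the finitely many functions $\log \sigma_m(s,\cdot)$ lie in $L^\infty$; this is the one place where finite generation is genuinely used, and it is essentially the SAT-property / isometry of the Poisson transform combined with $P\chi_A(g) = m(gA)$.)

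Next I would set up a telescoping / martingale-type computation. Consider the quantity $I_n = \int_G \int_B \log \sigma_m(g,b)\, dm(b)\, d\mu^{*n}(g)$. Using the cocycle identity $\sigma_m(g_1 g_2, b) = \sigma_m(g_1,b)\sigma_m(g_2,g_1 b)$ together with the fact that pushing forward by $\mu$ preserves stationarity of $m$, one checks that $I_{n+1} = I_n + I_1$, so $I_n = n I_1$; here $I_1 = \int_G \int_B \log \frac{dg^{-1}m}{dm}(b)\,dm(b)\,d\mu(g)$ is exactly the quantity we wish to show vanishes, and by Jensen's inequality applied to \eqref{cocycleharm} we already know $I_1 \leq 0$. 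The goal thus becomes to bound $|I_n|$ sublinearly in $n$. Combining the Lipschitz estimate from the first step with the zero-drift hypothesis gives
\[
|I_n| \leq \int_G \int_B |\log \sigma_m(g,b)|\, dm(b)\, d\mu^{*n}(g) \leq C \int_G d(g,e)\, d\mu^{*n}(g) = o(n),
\]
since $\ell_d(\mu) = \lim_n \frac1n \int_G d(g,e)\,d\mu^{*n}(g) = 0$. Therefore $n|I_1| = |I_n| = o(n)$, forcing $I_1 = 0$.

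Finally I would close the loop: by the equality case in Jensen's inequality, $I_1 = \int_G \int_B \log \sigma_m(g,b)\,dm\,d\mu = 0$ together with $\int_G \sigma_m(g,b)\,d\mu(g) = 1$ a.e.\ forces $\sigma_m(g,b) = 1$ for $\mu$-a.e.\ $g$ and $m$-a.e.\ $b$, i.e.\ $g^{-1}m = m$ for $\mu$-a.e.\ $g$; since $\supp \mu$ generates $G$ as a semigroup this yields $G$-invariance of $m$, and then Proposition \ref{furstenberg} identifies the Poisson boundary as trivial, so $(G,\mu)$ is Liouville. The main obstacle I anticipate is the very first step: one must verify that $\log \sigma_m(s,\cdot) \in L^\infty(B,m)$ for $s \in S$ (equivalently, that the Radon--Nikodym derivatives $\frac{ds^{-1}m}{dm}$ are bounded above and below), which is not automatic for a general measure class but should follow from the structural properties of the Poisson boundary — if necessary one can replace the crude $L^\infty$ bound by an $L^1$-Lipschitz estimate $\int_B |\log\sigma_m(s,b)|\,dm(b) < \infty$ and push the telescoping argument through in $L^1$, which is all that the drift estimate really requires.
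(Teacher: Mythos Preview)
Your proposal is correct and is essentially the paper's own argument: the paper packages your three steps as Proposition~1.2, applied to the cocycle $c_m(g,\cdot)=-\log\sigma_m(g,\cdot)$ and the $\mu$-harmonic mean $\lambda=m$, using exactly your telescoping identity $\int_G \lambda(c(g))\,d\mu^{*n}(g)=n\int_G\lambda(c(g))\,d\mu(g)$ together with the drift bound $\|c(g)\|_\infty\leq C\,d(g,e)$. Your one declared worry --- that $\log\sigma_m(s,\cdot)\in L^\infty(B,m)$ for generators $s$ --- is handled in the paper not via SAT but directly from the harmonicity relation \eqref{cocycleharm}: for each $s$ there is $n$ with $\mu^{*n}(s)>0$, whence $\sigma_m(s,b)\,\mu^{*n}(s)\leq \int_G\sigma_m(g,b)\,d\mu^{*n}(g)=1$, giving the upper bound, and the lower bound follows from $\sigma_m(s,\cdot)\geq 1/\|\sigma_m(s^{-1},\cdot)\|_\infty$; so no fallback to an $L^1$ estimate is needed. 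The only cosmetic difference is that the paper states the proposition for an arbitrary $L^\infty$-valued cocycle and an arbitrary $\mu$-harmonic mean, which makes the telescoping step transparent and isolates exactly where finite generation and zero drift enter.
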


We note that equation \eqref{cocycleharm} implies that $\sigma_m(g,\cdot)$ is not only in $L^1(B,m)$
for every $g$, but is in fact essentially bounded. This can be seen as
follows. Since we assume that the support of $\mu$ generates $G$ as
a semigroup, there exists for every $s$ in $G$, an integer $n$ such that
the measure $\mu^{*n}(s)$ is positive, and thus
\[
\sigma_m(s,b) \mu^{*n}(s) \leq \int_G \sigma_m(g,b) \, d\mu^{*n}(g) = 1
\]
for $m$-almost every $b$ in $B$. We conclude that 
\[
\|\sigma_m(s,\cdot)\|_\infty \leq \frac{1}{\mu^{*n}(s)} < +\infty, 
\]
where $n$ is chosen as above, and 
\[
\sigma_m(s,\cdot) \geq \frac{1}{\sigma_m(s^{-1},s \cdot )} \geq \frac{1}{\|\sigma_m(s^{-1},\cdot)\|_\infty}, \quad \forall \, s \in G.
\]
In particular, if we define 
\[
c_m(g,b) = -\log \sigma_m(g,b),
\]
then one can think of $c_m$ as a map from $G$ into $L^\infty(B,m)$, which
satisfies the equations
\[
c_m(g_1 g_2) = c_m(g_1) +g_1^{-1} c_m(g_2), \quad \forall \, g_1, g_2 \in G,
\]
where $G$ acts on $L^\infty(B,m)$ via the left regular representation. We shall refer to such maps $c$ from $G$ into $L^\infty(B,m)$ as \emph{cocycles}, and one readily checks that if $c$ is a cocycle, then 
\[
\rho_c(g) = \|c(g)\|_\infty, \quad g \in G,
\]
defines a semi-metric on $G$, i.e. 
\[
\rho_c(g_1,g_2) \leq \rho_c(g_1) + \rho_c(g_2), \quad \forall \, g_1, g_2 \in G.
\]
We reserve the notation $\rho_m$ for the semi-metric associated to the cocycle $c_m$ above and refer to $\rho_m$ as the \emph{canonical semi-metric on $G$ associated with $(G,\mu)$}. 

\begin{remark}
In order to get a better feeling for the canonical semi-metric of a measured group, let us
consider the case when $G = \bF_2$, the free group on two free generators $a$ and $b$,
equipped with the (symmetric) probability measure
\[
\mu = \frac{1}{4}\big( \delta_a + \delta_b + \delta_{a^{-1}} + \delta_{b^{-1}} \big).
\]
Let $\partial \bF_2$ denote the compact space of all infinite one-sided reduced words in $a$ and $b$ and their inverses and note that the action of $\bF_2$ on itself extends to an
action by homeomorphisms on $\partial \bF_2$. One readily checks that the 
Borel probability measure $m$ on $\partial \bF_2$ which assigns the same measure to all
cylinder sets in $\partial \bF_2$ corresponding to words of the same word length is 
$\mu$-harmonic, and one can prove that $(\partial \bF_2,m)$ realizes the Poisson 
boundary for $(\bF_2,\mu)$. Furthermore, the Radon-Nikodym cocycle of $m$ is given
by
\[
\sigma_m(g,\xi) = 3^{-(\|g\|-2(g,\xi))}, \quad (g,\xi) \in \bF_2 \times \partial \bF_2,
\]
where $\|\cdot\|$ denotes the word metric on $\bF_2$ (with respect to $a$ and $b$
and their inverses) and $(g,\xi)$ is the length of the longest common sub-word of 
$g$ and $\xi$ (this is often referred to as the \emph{confluent} or \emph{Gromov product} in the literature). A straightforward calculation now yields
\[
\rho_m(g) = \|g\| \cdot \ln 3, \quad \forall \, g \in G,
\]
which in particular shows that in this special case, $\rho_m$ is in fact a metric. 
\end{remark}

The following simple proposition relates the asymptotic behavior of a cocycle $c$ to the vanishing of the drift of $(G,\mu)$ with respect to word-metrics on finitely generated groups, and finishes the proof of our version of the theorem of Kaimanovich-Vershik.

Recall that a \emph{$\mu$-harmonic mean $\lambda$ on $L^\infty(B,m)$} is a functional on $L^\infty(B,m)$ which is positive, i.e. gives non-negative
values to non-negative elements in $L^\infty(B,m)$, normalized, i.e. 
$\lambda(1) = 1$ and satisfies
\[
\int_G \lambda(g \cdot \phi) \, d\mu(g) = \lambda(\phi), \quad \forall \, \phi \in L^\infty(B,m)
\]
where $G$ acts on $L^\infty(B,m)$ via the left regular representation. In 
particular, the measure $m$ is a $\mu$-harmonic mean on $L^\infty(B,m)$.

\begin{proposition}
Let $(G,\mu)$ be a finitely generated measured group such that 
$\ell_d(\mu) = 0$ for some left invariant word-metric $d$ with 
respect to a finite symmetric generating set of $G$. Let $(B,m)$ 
denote the Poisson boundary of $(G,\mu)$. If $c : G \ra  L^\infty(B,m)$
is a cocycle, then 
\[
\int_G \lambda(c(g)) \, d\mu(g) = 0
\]
for every $\mu$-harmonic mean $\lambda$ on $L^\infty(B,m)$. 
In particular, we have
\[
\int_B \int_G \log \frac{dg^{-1}m}{dm}(b) \, d\mu(g) dm(b) = 0,
\]
so $m$ is $G$-invariant, and thus $(B,m)$ is trivial.
\end{proposition}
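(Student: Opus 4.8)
The plan is to estimate $\lambda(c(g))$ by a "telescoping along the random walk" argument, using the cocycle identity and the fact that $c$ is bounded by the canonical semi-metric $\rho_c$, which in turn is controlled by the word metric. Concretely, fix a finite symmetric generating set $S$ with associated word metric $d$, and first observe that since $c$ is a cocycle, $\|c(s)\|_\infty \le \rho_c(s)$ and, by subadditivity of $\rho_c$ together with $\rho_c(s) = \rho_c(s^{-1})$ for $s \in S$ (which holds because $c(g^{-1}) = -g\,c(g)$ and $G$ acts isometrically on $L^\infty(B,m)$), we get $\rho_c(g) \le C\, d(g,e)$ for all $g \in G$, where $C = \max_{s \in S} \rho_c(s)$. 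Hence $\|c(g)\|_\infty \le C\, d(g,e)$ for every $g \in G$, and in particular $\big| \lambda(c(g)) \big| \le C\, d(g,e)$.

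Next I would iterate the $\mu$-harmonic mean identity. Writing $a = \int_G \lambda(c(g))\, d\mu(g)$, the cocycle identity $c(g_1 g_2) = c(g_1) + g_1^{-1} c(g_2)$ and the defining property $\int_G \lambda(g^{-1}\cdot \psi)\, d\mu(g) = \lambda(\psi)$ of a $\mu$-harmonic mean give, after integrating against $\mu^{*n}$ and expanding one step at a time,
\[
\int_G \lambda(c(g)) \, d\mu^{*n}(g) = n \cdot a
\]
for every $n \ge 1$. On the other hand, the bound from the first paragraph yields
\[
\Big| \int_G \lambda(c(g)) \, d\mu^{*n}(g) \Big| \le C \int_G d(g,e)\, d\mu^{*n}(g),
\]
and dividing by $n$ and letting $n \to \infty$, the right-hand side tends to $C \cdot \ell_d(\mu) = 0$ by hypothesis, while the left-hand side is $|a|$. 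Therefore $a = 0$, which is the first claim.

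For the final assertion, apply this to the cocycle $c = c_m$, so that $c_m(g) = -\log \sigma_m(g,\cdot)$ as an element of $L^\infty(B,m)$, and take $\lambda = m$, which is a $\mu$-harmonic mean by the remark preceding the proposition. Then $\int_G m(c_m(g))\, d\mu(g) = 0$ unwinds to $\int_B \int_G \log \frac{dg^{-1}m}{dm}(b)\, d\mu(g)\, dm(b) = 0$. Since $\log \frac{dg^{-1}m}{dm}$ has zero integral against $m$ for $\mu$-a.e.\ $g$ (by Jensen and \eqref{cocycleharm} with $n=1$, each such integral is $\le 0$), the vanishing of the double integral forces $\frac{dg^{-1}m}{dm} = 1$ $m$-a.e.\ for $\mu$-a.e.\ $g$; the cocycle relation \eqref{multcocycle} then propagates this to all $g$ in the semigroup generated by $\supp\mu$, i.e.\ to all of $G$, so $m$ is $G$-invariant and $(B,m)$ is trivial by Proposition \ref{furstenberg}.

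The main obstacle is the bookkeeping in the iteration step: one must check carefully that the cocycle identity combines with the mean-invariance to produce exactly the linear growth $n \cdot a$ rather than something only asymptotically linear, and that all the functions involved stay uniformly bounded (which is where essential boundedness of $\sigma_m(s,\cdot)$, established just above, is used) so that $\lambda$ may legitimately be applied term by term. Everything else is a routine combination of Fekete's lemma, Jensen's inequality, and the harmonicity of $\sigma_m$.
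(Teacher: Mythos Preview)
Your proposal is correct and follows essentially the same route as the paper: bound $\rho_c$ by a constant times the word metric via subadditivity, use the cocycle identity together with $\mu$-harmonicity of $\lambda$ to obtain the exact linear growth $\int_G \lambda(c(g))\,d\mu^{*n}(g)=n\cdot a$, and then divide by $n$ and invoke $\ell_d(\mu)=0$. Your only additions are spelling out the Jensen step (which the paper mentions just before the proposition rather than inside the proof) and the unnecessary symmetry $\rho_c(s)=\rho_c(s^{-1})$, which you do not need since the generating set $S$ is already symmetric.
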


\begin{proof}
Since $G$ is finitely generated, there exists for every cocycle $c : G \ra L^\infty(B,m)$ a constant $C_c$ such that
\[
\rho_c(g) \leq C_c \cdot d(g,e), \quad \forall \, g \in G,
\]
where $d$ is a word-metric on $G$ with respect to a finite symmetric
generating set of $G$. We assume that $\ell_d(\mu) = 0$, and thus
\[
\lim_{n} \frac{1}{n} \int_G \rho_c(g) \, d\mu^{*n}(g) = 0
\]
for every cocycle $c$. If $\lambda$ is a $\mu$-harmonic mean on 
$L^\infty(B,m)$, one readily checks that
\[
\int_G \lambda(c(g)) \, d\mu^{*n}(g) = n \cdot \int_G \lambda(c(g)) \, d\mu(g)
\]
for all $n$, and thus
\[
\Big| \int_G \lambda(c(g)) \, d\mu(g) \Big|
= 
\Big|\frac{1}{n} \int_G \lambda(c(g)) \, d\mu^{*n}(g) \Big|
\leq 
\frac{1}{n} \int_G \| c(g)\|_\infty \, d\mu^{*n}(g) \ra 0,
\]
which finishes the proof.
\end{proof}

\section{Ergodicity with unitary coefficients}

We now turn to some ergodic-theoretical aspects of random walks on 
groups. As we have seen, to every measured group $(G,\mu)$ one can
associate an ergodic $(G,\mu)$-space $(B,m)$, called the Poisson 
boundary of $(G,\mu)$ with the remarkable property that the linear
map $P : L^\infty(B,m) \ra \cH^\infty_l(G,\mu)$ defined by
\[
P\phi(g) = \int_B \phi(g^{-1}b) \, dm(b), \quad g \in G,
\]
is an isometric isomorphism. The aim of this section is to give a short
proof of (a weak version) of an observation of Kaimanovich in \cite{K03}, 
that the mere fact that this is an isomorphism onto 
$\cH_l^\infty(G,\mu)$ automatically forces significantly stronger ergodicity properties. 

\begin{theorem}[Kaimanovich, weak version]
\label{Kde}
Let $(G,\mu)$ be a measured group and denote by $(B,m)$ and 
$(\check{B},\check{m})$ the Poisson boundaries of $(G,\mu)$
and $(G,\cmu)$ respectively. If $(Y,\eta)$ is any ergodic probability
measure preserving $G$-space, then the diagonal action on the
triple $(B \times \check{B} \times Y, m \otimes \check{m} \otimes \eta)$
is ergodic.
\end{theorem}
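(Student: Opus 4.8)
The plan is to reduce the statement to an assertion about unitary representations and then prove that assertion by a short martingale argument run along two independent random walks. Call $(\ast)$ the assertion: \emph{for every unitary representation $\kappa$ of $G$ on a Hilbert space $\cH$, every bounded measurable $G$-equivariant map $\Phi:B\times\check B\to\cH$ — i.e.\ $\Phi(gb,g\check b)=\kappa(g)\Phi(b,\check b)$ for all $g\in G$ and a.e.\ $(b,\check b)$ — agrees a.e.\ with a $\kappa(G)$-fixed vector.} To deduce the theorem, let $F\in L^\infty(B\times\check B\times Y)$ be $G$-invariant, take for $\kappa$ the Koopman representation of $G$ on $\cH=L^2(Y,\eta)$ (unitary since $\eta$ is $G$-invariant), and put $\Phi(b,\check b)=F(b,\check b,\cdot)$; this is bounded by $\|F\|_\infty$ and $G$-equivariant, so by $(\ast)$ it agrees a.e.\ with a $\kappa(G)$-fixed vector $w\in L^2(Y,\eta)$. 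Such a $w$ is a $G$-invariant function on $Y$, hence constant because $(Y,\eta)$ is ergodic, and therefore $F$ is constant — i.e.\ the diagonal action on the triple is ergodic.

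To prove $(\ast)$, put i.i.d.\ $\mu$-distributed $(g_k)_{k\ge1}$ and, independently, i.i.d.\ $\cmu$-distributed $(h_k)_{k\ge1}$ on one probability space, and set $S_n=g_1\cdots g_n$, $\check S_n=h_1\cdots h_n$. Using the Poisson boundary maps, the $\mu$-walk converges to $\zeta^+=\lim_nS_n\in B$ with $\zeta^+\sim m$ and $\bE[\psi(\zeta^+)\mid g_1,\dots,g_n]=\int_B\psi(S_nb)\,dm(b)$ for bounded measurable $\psi$; symmetrically $\check S_n\to\check\zeta\sim\check m$, and by independence $(\zeta^+,\check\zeta)\sim m\otimes\check m$. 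Conditioning on $\cF_n=\sigma(g_1,\dots,g_n)\vee\sigma(h_1,h_2,\dots)$ — with respect to which $\check\zeta$ is measurable and the future $\mu$-increments are independent — equivariance of $\Phi$ yields
\[
\bE\big[\Phi(\zeta^+,\check\zeta)\mid\cF_n\big]=\int_B\Phi(S_nb,\check\zeta)\,dm(b)=\kappa(S_n)\,\Xi(S_n^{-1}\check\zeta),\qquad \Xi(\check b):=\int_B\Phi(b,\check b)\,dm(b),
\]
and the left side converges to $\Phi(\zeta^+,\check\zeta)$ in $L^2(\Omega;\cH)$, so the squared $L^2$-norms converge. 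The key observation is that $S_n^{-1}\check\zeta=g_n^{-1}\cdot(S_{n-1}^{-1}\check\zeta)$ with $g_n^{-1}$ distributed $\cmu$ and independent of $S_{n-1}^{-1}\check\zeta$, and $\check m$ is $\cmu$-stationary; hence the $\check B$-valued chain $(S_n^{-1}\check\zeta)_n$ is stationary — it is launched from its stationary law $\check\zeta\sim\check m$ — so $S_n^{-1}\check\zeta\sim\check m$ for every $n$. As $\kappa(S_n)$ is unitary, $\bE\|\kappa(S_n)\Xi(S_n^{-1}\check\zeta)\|^2=\int_{\check B}\|\Xi(\check b)\|^2\,d\check m(\check b)$ is independent of $n$; letting $n\to\infty$ it equals $\bE\|\Phi(\zeta^+,\check\zeta)\|^2=\int_{B\times\check B}\|\Phi(b,\check b)\|^2\,d(m\otimes\check m)$. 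Comparing this with Jensen's inequality $\|\Xi(\check b)\|^2\le\int_B\|\Phi(b,\check b)\|^2\,dm(b)$, integrated against $\check m$, forces equality in Jensen for $\check m$-a.e.\ $\check b$; since $v\mapsto\|v\|^2$ is strictly convex, $\Phi(\cdot,\check b)$ is $m$-a.e.\ constant for $\check m$-a.e.\ $\check b$. Running the mirror-image argument with the two walks interchanged — now using that $\zeta^+\sim m$ is $\mu$-stationary, so $\check S_n^{-1}\zeta^+\sim m$ for all $n$ — shows that $\Phi(b,\cdot)$ is $\check m$-a.e.\ constant for $m$-a.e.\ $b$. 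A function on $B\times\check B$ that is independent of each coordinate separately is a.e.\ constant, say $\Phi\equiv w$, and then equivariance gives $\kappa(g)w=w$ for all $g$, proving $(\ast)$.

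I expect the only real obstacle to be the mechanism inside $(\ast)$: the martingale identity and its $L^2$-convergence are routine, but one has to notice that the relevant chain $S_n^{-1}\check\zeta$ on $\check B$ is \emph{stationary} precisely because it is started from the hitting measure $\check m$, which is what upgrades $L^2$-martingale convergence to the equality case of Jensen's inequality and thereby eliminates the dependence of $\Phi$ on the first boundary factor. The remaining points — the conditional-expectation property of the Poisson boundary map, and the legitimacy of evaluating $G$-equivariance at the random element $S_n$ (harmless since $G$ is countable) — are routine.
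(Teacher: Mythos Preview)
Your proof is correct, but it takes a genuinely different route from the paper's. The paper does not pass through the equivariant-map statement $(\ast)$ (it only mentions that reformulation in a remark); instead it proves a more general intermediate result, Theorem~\ref{K}: for \emph{any} ergodic $(G,\cmu)$-space $(X,\nu)$, the diagonal action on $(B\times X,m\otimes\nu)$ is ergodic. Theorem~\ref{Kde} follows by applying this twice --- first with $X=Y$ to see that $\check B\times Y$ is an ergodic $(G,\cmu)$-space, then with $X=\check B\times Y$. The proof of Theorem~\ref{K} is entirely ``static'': for $G$-invariant $f\in L^\infty(B\times X)$, the partial integral $s(x)=\int_B f(b,x)\,dm(b)$ satisfies $\int_G s(gx)\,d\mu(g)=s(x)$, and a two-line $L^2$ expansion (Lemma~\ref{harmonic constant}) forces $s$ to be $G$-invariant, hence zero; then $G$-invariance of $f$ gives $\int_B f(b,x)\,\sigma_m(g,b)\,dm(b)=0$ for all $g$, and density of the span of the Radon--Nikodym cocycles $\{\sigma_m(g,\cdot):g\in G\}$ in $L^1(B,m)$ --- which is just a restatement of the injectivity of the Poisson transform --- finishes the argument. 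Your approach, by contrast, is probabilistic and closer in spirit to Kaimanovich's original path-space proof that the paper explicitly sets out to bypass: you realize $m\otimes\check m$ as the hitting distribution of two independent random walks and exploit the stationarity of the chain $S_n^{-1}\check\zeta\sim\check m$ to turn martingale $L^2$-convergence into the equality case of Jensen. What the paper's route buys is a cleaner and strictly stronger intermediate statement (Theorem~\ref{K}, valid for arbitrary ergodic $(G,\cmu)$-spaces, not only $\check B\times Y$) and complete avoidance of boundary-convergence and path-space machinery; what your route buys is a direct attack on the Hilbert-coefficient formulation, which is the form one actually wants for rigidity and bounded-cohomology applications.
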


\begin{remark}
It is not hard to show (see for instance the recent survey by Glasner-Weiss \cite{GW13}) that the
theorem above can be equivalently formulated as follows: For every 
unitary $G$-representation $(\cH,\pi)$ on a Hilbert space $\cH$, any
measurable $G$-equivariant map $F : B \times \check{B} \ra \cH$
must be essentially constant. Kaimanovich proves in \cite{K03} the
a priori stronger statement that one can assert the same thing about 
$G$-equivariant and weak*-measurable maps from $B \times \check{B}$
into \emph{any} isometric $G$-representation on any (separable) Banach space. For certain applications in bounded cohomology, this seemingly
stronger statement is needed. Since we wish to keep the discussions in 
this paper fairly short, we shall confine ourselves to the setting of Theorem \ref{Kde}, although many of the techniques we shall describe can be used 
to give a complete proof of the main result in \cite{K03}.
\end{remark}

To start the proof of Theorem \ref{Kde}, we first observe that if $(\check{B},\check{m})$ is the Poisson boundary of 
$(G,\cmu)$ and $(Y,\eta)$ is any probability measure preserving $G$-space,
then the diagonal $G$-action on $(\check{B} \times Y,m \otimes \eta)$ is 
a $(G,\cmu)$-space. Hence Theorem \ref{Kde} follows immediately from 
the following result, which we have not been able to directly locate in the
literature. 

\begin{theorem}
\label{K}
Let $(G,\mu)$ be a measured group with Poisson boundary $(B,m)$
and suppose $(X,\nu)$ is an ergodic $(G,\cmu)$-space. Then the 
diagonal $G$-action on the product space $(B \times X,m \otimes \nu)$ 
is ergodic. 
\end{theorem}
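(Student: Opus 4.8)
The plan is to show that any bounded $G$-invariant function $F$ on $B \times X$ is essentially constant. Fix such an $F \in L^\infty(B \times X, m \otimes \nu)$. The key idea is to use the Poisson transform isomorphism $P : L^\infty(B,m) \to \cH^\infty_l(G,\mu)$ from Proposition \ref{furstenberg}, together with the stationarity of $\nu$ with respect to $\cmu$, to convert invariance on the product into a statement about a single harmonic function.

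First I would, for each fixed $x \in X$, consider the slice $F_x(b) = F(b,x) \in L^\infty(B,m)$ (this is legitimate for a.e.\ $x$ by Fubini) and apply the Poisson transform to get a harmonic function $h_x = P F_x \in \cH^\infty_l(G,\mu)$, i.e.
\[
h_x(g) = \int_B F(g^{-1}b, x) \, dm(b).
\]
Now I would exploit $G$-invariance of $F$: since $F(g^{-1}b, g^{-1}x) = F(b,x)$ for a.e.\ $(b,x)$ and all $g$, we obtain $h_{gx}(g) = \int_B F(g^{-1}b, x)\,dm(b)$ after the substitution, so in fact $h_x(g) = h_{gx}(e) = \int_B F(b, gx)\, dm(b)$. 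Writing $\Phi(x) = \int_B F(b,x)\, dm(b) \in L^\infty(X,\nu)$, this says precisely that $h_x(g) = \Phi(gx)$ for a.e.\ $x$ and all $g$. Since $x \mapsto h_x$ is (essentially) determined by the single function $\Phi$ via $h_x(g) = \Phi(gx)$, and the map $\phi \mapsto P\phi$ is isometric, the norm $\|F_x\|_\infty$ is controlled by $\sup_g |\Phi(gx)| = \|h_x\|_\infty$; but more importantly, since $P$ is \emph{onto} with isometric inverse, the slice $F_x$ is recovered from $h_x$, hence from $\Phi$ restricted to the orbit of $x$.

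Next I would feed this back in: the function $g \mapsto \Phi(gx)$ being bounded and left $\mu$-harmonic in $g$ for a.e.\ $x$ means, integrating against $\nu$ and using that $\nu$ is $\cmu$-stationary, that $\Phi$ itself is a fixed point of the Markov operator $\phi \mapsto \int_G \phi(g\,\cdot)\, d\cmu(g)$ on $L^\infty(X,\nu)$. A standard maximal/mean-ergodic argument for $\cmu$-stationary actions (or directly: $\Phi$ harmonic for the $\cmu$-walk on the ergodic $(G,\cmu)$-space $(X,\nu)$ forces $\Phi$ constant, since the orbit of $\Phi$ under the random walk is a bounded martingale whose a.s.\ limit is measurable with respect to a tail $\sigma$-algebra on which ergodicity of $\nu$ applies) shows $\Phi \equiv c$ is constant $\nu$-a.e. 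Then for a.e.\ $x$ the harmonic function $h_x \equiv c$ is constant, so $F_x = P^{-1}(c) = c$ in $L^\infty(B,m)$ for a.e.\ $x$, i.e.\ $F = c$ a.e.\ on $B \times X$. This proves ergodicity of the diagonal action.

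\textbf{Main obstacle.} The delicate point is the claim that $\Phi(x) = \int_B F(b,x)\,dm(b)$ being a bounded $\cmu$-harmonic function on the $(G,\cmu)$-space $(X,\nu)$ forces it to be constant. For a genuine ergodic probability-\emph{preserving} action this would follow from the mean ergodic theorem, but $(X,\nu)$ is only stationary, so harmonic functions on it need not be invariant in general. The resolution must use that $(X,\nu)$ is \emph{ergodic} as a $(G,\cmu)$-space in the strong sense that the only $G$-invariant sets are trivial, combined with the fact that $F$ is genuinely $G$-invariant (not merely that its slices are harmonic): tracing through carefully, $G$-invariance of $F$ gives $\Phi(gx) = h_x(g)$ which as a function of $g$ is harmonic, and one shows that the function $(g,x) \mapsto \Phi(gx)$ on $G \times X$ descends, via the isometry $P$ and ergodicity of the product $B \times \check B$ against the diagonal — or more elementarily, by observing that $\Phi$ must be measurable with respect to the $\cmu$-invariant $\sigma$-algebra, which is trivial by ergodicity. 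I would spend most of the write-up making this last step airtight, likely by passing to the path space of the $\cmu$-random walk and using the martingale convergence theorem together with triviality of invariant sets for $(X,\nu)$.
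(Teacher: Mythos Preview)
Your overall architecture is exactly the paper's: define $\Phi(x)=\int_B F(b,x)\,dm(b)$, show it is harmonic for the appropriate Markov operator on $(X,\nu)$, conclude it is constant, and then recover that each slice $F_x$ is constant via the Poisson transform. Your use of the isomorphism $P$ in the last step is equivalent (by Hahn--Banach) to the paper's observation that the span of the Radon--Nikodym derivatives $\sigma_m(g,\cdot)$ is dense in $L^1(B,m)$.

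Where you diverge from the paper is in handling the ``main obstacle'' you correctly isolate. You worry that on a merely stationary (non-invariant) space a bounded harmonic function need not be invariant, and you propose path-space and martingale arguments. The paper dispatches this in one line via an $L^2$ expansion (Lemma~\ref{harmonic constant}): if $(X,\nu)$ is a $(G,\cmu)$-space and $\Phi\in L^\infty(X,\nu)$ satisfies $\Phi=\int_G \Phi(g^{-1}\cdot)\,d\cmu(g)$, then
\[
\int_G\!\int_X\big|\Phi(g^{-1}x)-\Phi(x)\big|^2\,d\cmu^{*k}(g)\,d\nu(x)
=2\Big(\int_X|\Phi|^2\,d\nu-\int_X \Phi\cdot\!\int_G \Phi(g^{-1}\cdot)\,d\cmu^{*k}(g)\,d\nu\Big)=0,
\]
using only stationarity of $\nu$ for the first term and harmonicity of $\Phi$ for the cross term. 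Hence $\Phi$ is genuinely $G$-invariant, and ergodicity of $(X,\nu)$ finishes. So your obstacle is real but has a two-line resolution; no martingales or tail $\sigma$-algebras are needed, and you should certainly not invoke ergodicity of $B\times\check B$ (that would be circular, since it is derived \emph{from} the present theorem). One further caution: be careful with which of $\mu$ or $\cmu$ appears in the operator fixing $\Phi$ --- the relevant identity comes directly from $\mu$-stationarity of $m$ together with $G$-invariance of $F$, and must match the $\cmu$-stationarity of $\nu$ so that the $L^2$ computation above goes through.
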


\begin{remark}
In order to see how Theorem \ref{Kde} follows this statement, we argue in two steps. 
First note that if $(Y,\eta)$ is an \emph{ergodic} probability measure preserving $G$-space, then 
it is an ergodic $(G,\cmu)$-space as well, and  Theorem \ref{K} implies that $X = \check{B} \times Y$, with the probability measure $\nu = \check{m} \otimes \eta$, is an \emph{ergodic} $(G,\cmu)$-space. If we now apply Theorem \ref{K} to the diagonal $G$-action on the direct product
\[
(B \times X,m \otimes \nu) = 
(B \times \check{B} \times Y,m \otimes \check{m} \otimes \eta),
\]
then we conclude that it is also ergodic, which is exactly the assertion of 
Theorem \ref{Kde}. \\

In the case when $(X,\nu)$ is an ergodic probability measure preserving 
$G$-space, Theorem \ref{K} is due to Aaronson and Lema\'nczyk in 
\cite{AL05}. Their proof however follows quite different lines than ours. 
\end{remark}

We now begin our proof of Theorem \ref{K}. Let $(X,\mu)$ be an ergodic $(G,\cmu)$-space, and suppose $f$ is a
$G$-invariant essentially bounded function on $B \times X$, which we
may without loss of generality assume to have zero integral. We wish to
prove that $f$ vanishes identically. 

For this purpose, we shall show that 
\begin{equation}
\label{zeroint}
\int_B f(b,x) \, \phi(b) \, dm(b) = 0, \quad \textrm{for $\nu$-a.e. $x$ in $X$}
\end{equation}
for all $\phi \in L^1(B,m)$, and thus
\[
\int_X \int_B f(b,x) \phi(b) \psi(x) \, dm(b) \, d\nu(x) = 0
\]
for all $\phi \in L^1(B,m)$ and $\psi \in L^1(X,\nu)$, which shows that
$f$ must vanish identically, establishing ergodicity for the diagonal action
on $(B \times X,m \otimes \nu)$. To prove \eqref{zeroint}, we argue as 
follows. Define
\[
s(x) = \int_B f(b,x) \, dm(b)
\]
and note that
\begin{eqnarray*}
\int_G s(gx) \, d\mu(g) 
&=& 
\int_G \int_B f(b,gx) \, dm(b) \, d\mu(g) \\
&=&
\int_G \int_B f(g^{-1}b,x) \, dm(b) \, d\mu(g) \\
&=&
\int_B f(b,x) \,  dm(b) = s(x),
\end{eqnarray*}
since $m$ is $\mu$-harmonic. The following lemma now
shows that $s$ must vanish almost everywhere with respect 
to the measure $\nu$ (recall that $(X,\nu)$ is an ergodic $(G,\cmu)$-space).

\begin{lemma}
\label{harmonic constant}
Let $(X,\nu)$ be a $(G,\mu)$-space and suppose that 
$s \in L^\infty(X,\nu)$ satisfies the equation
\[
s = \int_G s(g^{-1} \cdot ) \, d\mu(g) \quad 
\textrm{in $L^\infty(X,\nu)$.}
\]
Then $f$ is essentially $G$-invariant. In particular, if $(X,\nu)$ is 
ergodic, then $f$ equals its $\nu$-integral almost everywhere.
\end{lemma}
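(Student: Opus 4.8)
The statement is a standard "bounded harmonic functions are constant on the Poisson boundary" argument, but transplanted from $(G,\mu)$ acting on itself to a general $(G,\mu)$-space $(X,\nu)$. The key point is that the hypothesis on $s$ says precisely that $s$, viewed as a function on $X$, is fixed by the Markov operator $P\colon h \mapsto \int_G h(g^{-1}\cdot)\, d\mu(g)$ on $L^\infty(X,\nu)$. The plan is to show that any such fixed point is $G$-invariant; ergodicity then forces $s$ to be a.e. constant, hence equal to its $\nu$-integral, since $\nu$ is a probability measure.

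First I would pass to a martingale. Fix the one-sided Bernoulli space $(\Omega,\bP) = (G^{\bN}, \mu^{\otimes\bN})$ of increments $(\omega_1,\omega_2,\dots)$ and set $X_n = \omega_1\cdots\omega_n$ for the random walk trajectory. For a fixed point $x \in X$ (or rather for a.e.\ $x$), consider the sequence $M_n(x,\omega) = s(X_n^{-1} x)$. The stationarity equation $s = Ps$ iterated gives $\int_\Omega s(X_{n+1}^{-1}x)\, d\bP = s(x)$ for all $n$, and more precisely, conditioning on the first $n$ increments, $\bE[\,s(X_{n+1}^{-1}x) \mid \omega_1,\dots,\omega_n\,] = (Ps)(X_n^{-1}x) = s(X_n^{-1}x)$. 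So $(M_n)_n$ is a bounded martingale with respect to the filtration generated by the increments, and by the martingale convergence theorem it converges $\bP$-a.s.\ and in $L^1$ to some limit $M_\infty(x,\omega)$ with $\bE[M_\infty(x,\cdot)] = s(x)$.

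Next I would exploit the "forgetfulness'' of the tail. The limit $M_\infty$ depends only on the asymptotic behaviour of the trajectory, and the standard boundary-map argument (this is exactly where Proposition \ref{furstenberg}, or rather its proof, enters: the space of bounded harmonic functions is identified with $L^\infty$ of the Poisson boundary $(B,m)$) shows that $M_\infty(x,\omega)$ factors through the boundary point $\mathrm{bnd}(\omega) \in B$ determined by the trajectory. Concretely, there is $\hat s \in L^\infty(B\times X, m\otimes\nu)$ with $M_\infty(x,\omega) = \hat s(\mathrm{bnd}(\omega), x)$ a.e., and $s(x) = \int_B \hat s(b,x)\, dm(b)$. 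The point is then that for any $g$ in the support of $\mu$, shifting the first increment by $g$ changes $X_n^{-1}x$ to $X_n^{-1}(g^{-1}x)$ for $n \ge 1$ but does not change the limiting boundary point $\mathrm{bnd}(\omega)$; combined with the stationarity of $m$ under the walk, one deduces $\hat s(b, g^{-1}x) = \hat s(b,x)$ for a.e.\ $(b,x)$, for every $g$ in the support of $\mu$. Integrating over $b$ gives $s(g^{-1}x) = s(x)$ a.e.\ for every such $g$, and since the support of $\mu$ generates $G$ as a semigroup, $s$ is $G$-invariant. Ergodicity of $(X,\nu)$ then gives that $s$ is a.e.\ equal to the constant $\int_X s\, d\nu$.

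I expect the main obstacle to be the bookkeeping in the third step: making precise, without invoking more boundary theory than has been set up, that the martingale limit $M_\infty$ is (i) a genuine function on $B\times X$ and (ii) insensitive to the first increment while $m$ is stationary under it. A clean way to sidestep heavy machinery is to avoid the boundary altogether: observe directly that $\bE\bigl[\,|s(X_{n+1}^{-1}x) - s(X_n^{-1}g^{-1}x)|\,\bigr] \to 0$ is not quite automatic, but that $s(g^{-1}\cdot)$ is \emph{also} a bounded fixed point of $P$, and the difference $s(\cdot) - s(g^{-1}\cdot)$ is a bounded $P$-fixed point of mean zero; iterating $P$ and using that the martingale $(s(X_n^{-1}x) - s(X_n^{-1}g^{-1}x))_n$ converges while its increments telescope, one shows this difference must vanish. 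Either route works; the telescoping/martingale route is the more self-contained and fits the style of the preceding sections, so that is the one I would write up.
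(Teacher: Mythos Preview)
Your approach is far heavier than necessary. The paper's proof is a two-line $L^2$ computation: assuming $s$ real-valued, one expands
\[
\int_G \int_X \big|s(g^{-1}x) - s(x)\big|^2 \, d\nu(x)\, d\mu^{*k}(g)
= 2\int_X s^2\, d\nu - 2\int_X s(x)\Big(\int_G s(g^{-1}x)\, d\mu^{*k}(g)\Big)\, d\nu(x),
\]
where the left equality uses $\mu$-stationarity of $\nu$ (so that $\int_G\int_X s(g^{-1}x)^2\,d\nu\,d\mu^{*k} = \int_X s^2\,d\nu$), and the right-hand side vanishes by the harmonicity hypothesis iterated $k$ times. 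Since the support of $\mu$ generates $G$ as a semigroup, this gives $s(g^{-1}\cdot) = s$ in $L^2(\nu)$ for every $g\in G$. No martingales, no boundary.

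Your boundary-theoretic route can in principle be made to work, but it invokes the full Poisson boundary construction for a lemma that is strictly more elementary than Proposition~\ref{furstenberg} itself, and the key step---that the martingale limits of $s(X_n^{-1}x)$ and $s(X_n^{-1}g^{-1}x)$ agree---requires care about how the boundary map behaves under left translation of trajectories, which you gloss over. Your alternative ``telescoping'' route is not a proof as written: you observe that $s - s(g^{-1}\cdot)$ is again a bounded $P$-fixed point, but that is just the original problem for a new function, and nothing you say explains why this particular fixed point must vanish. The missing ingredient is precisely the $L^2$ cancellation above, which exploits the stationarity of $\nu$---a hypothesis you never use.
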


\begin{proof}
We may assume that $s$ is real-valued. Since the support of $\mu$ is assumed to generate $G$, it suffices to show that
\[
\int_G \int_X \big| s(g^{-1}x) - s(x) \big|^2 \, d\mu^{*k}(g) \, d\nu(x) = 0
\]
for all $k$. However, upon expanding the square, and using the harmonicity of $\nu$, we see that
\[
\int_G \int_X \big| s(g^{-1}x) - s(x) \big|^2 \, d\mu^{*k}(g) \, d\nu(x)
= 2 \cdot \Big( 
\int_X |s(x)|^2 \, d\nu(x) 
- 
\int_X s(x) \, \Big( \int_G s(g^{-1}x) \, d\mu^{*k}(g) \Big) \, d\nu(x) \Big),
\]
which clearly vanishes by our assumption on $s$.
\end{proof}

Going back to the proof of Theorem \ref{K}, we can now conclude that 
\[
\int_B f(b,x) \, dm(b) = 0 \quad \textrm{for $\nu$-a.e. $x$ in $X$}, 
\]
and thus
\[
\int_B f(b,gx) \, dm(b) = \int_B f(g^{-1}b,x) \, dm(b) = \int_B f(b,x) , \sigma_m(g,b) \, dm(b) = 0,
\]
for almost every $x$ in $X$ and for all $g$ in $G$. In particular, we 
have
\[
\int_B f(b,x) \, \phi(b) \, dm(b) = 0, \quad \textrm{for $\nu$-a.e. $x$ in $X$},
\]
and for all $\phi$ in the linear span of all $\sigma_m(g,\cdot)$ as $g$ 
ranges over $G$. Hence Theorem \ref{K} will follow  
from the following simple lemma, which is essentially just a 
reformulation of Proposition \ref{furstenberg}.

\begin{lemma}
Let $(G,\mu)$ be a measured group and denote by $(B,m)$ its Poisson 
boundary. Then the linear span
\[
\cR_m = \linspan\Big\{ \frac{dg^{-1}m}{dm} \, : \, g \in G \Big\} \subset L^1(B,m)
\]
is dense in $L^1(B,m)$.
\end{lemma}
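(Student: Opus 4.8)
The plan is to prove the contrapositive by a duality argument: if $\cR_m$ is not dense in $L^1(B,m)$, then by Hahn--Banach there exists a nonzero $\phi \in L^\infty(B,m) = L^1(B,m)^*$ that annihilates every element of $\cR_m$, i.e.
\[
\int_B \phi(b) \, \frac{dg^{-1}m}{dm}(b) \, dm(b) = 0 \quad \text{for all } g \in G.
\]
Rewriting the left-hand side by the definition of the Radon--Nikodym derivative, this says $\int_B \phi \, d(g^{-1}m) = 0$, which is precisely $\int_B \phi(g^{-1}b) \, dm(b) = 0$ for all $g \in G$; in other words $P\phi \equiv 0$, where $P$ is the Poisson transform from Proposition \ref{furstenberg}. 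Since $P$ is an isometric isomorphism onto $\cH_l^\infty(G,\mu)$, and in particular injective, $P\phi = 0$ forces $\phi = 0$, contradicting the choice of $\phi$. Hence no such functional exists and $\cR_m$ is dense.

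The key steps, in order, are: (1) set up the Hahn--Banach/duality statement, being careful to note that the dual of $L^1(B,m)$ is $L^\infty(B,m)$ (so $(B,m)$ should be taken $\sigma$-finite, which it is, being a probability space); (2) translate ``$\phi$ annihilates $dg^{-1}m/dm$ for all $g$'' into ``$\int_B \phi(g^{-1}b)\,dm(b) = 0$ for all $g$'' via the change-of-variables identity $\int_B \phi \,\frac{dg^{-1}m}{dm}\,dm = \int_B \phi \, d(g^{-1}m) = \int_B \phi(g^{-1}b)\,dm(b)$; (3) recognize the resulting function of $g$ as $P\phi$ and invoke injectivity of $P$ from Proposition \ref{furstenberg}; (4) conclude $\phi = 0$ and derive the contradiction. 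One could equally phrase this without Hahn--Banach by directly computing the annihilator of $\cR_m$ in $L^\infty$ and showing it is trivial, but the contrapositive formulation is cleanest.

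The only real subtlety — and the step I would treat most carefully — is the change-of-variables identity in step (2), namely that pairing $\phi \in L^\infty(B,m)$ against the density $\frac{dg^{-1}m}{dm} \in L^1(B,m)$ in the $m$-integral reproduces $\int_B \phi \, d(g^{-1}m)$, which in turn equals $\int_B \phi(g^{-1}b)\,dm(b)$ because the $G$-action on $(B,m)$ is by measure-class-preserving bi-measurable maps. This is routine measure theory, but it is the hinge on which the whole argument turns: it is what converts a statement about a spanning set of densities into a statement about the kernel of the Poisson transform. Everything else is a one-line invocation of the already-established isometric isomorphism in Proposition \ref{furstenberg}. No serious obstacle is anticipated; the lemma is genuinely, as the text says, ``essentially just a reformulation'' of that proposition, and the proof is correspondingly short.
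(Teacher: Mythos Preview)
Your proposal is correct and follows essentially the same route as the paper's own proof: assume non-density, apply Hahn--Banach to obtain a nonzero $\phi \in L^\infty(B,m)$ annihilating $\cR_m$, rewrite the annihilation condition as $P\phi \equiv 0$, and invoke the injectivity of $P$ from Proposition~\ref{furstenberg} to derive a contradiction. The paper's argument is identical in structure and detail, though stated more tersely.
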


\begin{proof}
If this span would not be dense, then by Hahn-Banach's Theorem, there 
exists a non-zero functional $\phi \in L^1(B,m)^* = L^\infty(B,m)$ such
that 
\[
\int_X \phi(b) \, \frac{dg^{-1}m}{dm}(b) \, dm(b) = \int_X \phi(g^{-1}b) \, dm(b) = 0, \quad \forall \, g \in G,
\]
or equivalently, $P\phi(g) = 0$, where $P$ is as in Proposition \ref{furstenberg}. Since $P$ is an isomorphism, we conclude 
that $\phi$ vanishes identically, which is a contradiction. 
\end{proof}

We finish this section with yet another consequence of Proposition \ref{furstenberg} which seems to be rarely stressed in the literature. 
It was first observed by Kaimanovich in \cite{Ka92}, but the  
analogous case (in fact, concerning positive bi-harmonic functions) 
for amenable \emph{connected} measured group goes back to 
Raugi in \cite{Ra88}).

\begin{corollary}[Choquet-Deny, Blackwell, Kaimanovich]
\label{CD}
For every probability measure $\mu$ on a countable 
group $G$, there are no non-constant \emph{bounded} functions 
which are both left and right $\mu$-harmonic. In particular, 
measured \emph{abelian} groups do not admit any 
non-constant bounded harmonic functions.
\end{corollary}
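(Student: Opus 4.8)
The plan is to deduce Corollary \ref{CD} directly from Proposition \ref{furstenberg} by exploiting the symmetry between left and right harmonicity together with the triviality criterion ``$(B,m)$ is trivial if and only if $m$ is $G$-invariant''. The key observation is that a bounded \emph{bi}-$\mu$-harmonic function, transported through the Poisson transform, must correspond to a $G$-invariant element of $L^\infty(B,m)$, which by ergodicity of $(B,m)$ is constant.

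Here are the steps in order. First, let $f$ be a bounded bi-$\mu$-harmonic function on $G$. Since $f$ is left $\mu$-harmonic, Proposition \ref{furstenberg} produces a unique $\phi \in L^\infty(B,m)$ with $f(g) = P\phi(g) = \int_B \phi(g^{-1}b)\,dm(b)$. Second, I would use the right $\mu$-harmonicity of $f$ to derive an invariance property of $\phi$. Writing out $(f*\mu)(g) = f(g)$ gives
\[
\int_B \phi(g^{-1}b)\,dm(b) = \int_G \int_B \phi(s^{-1}g^{-1}b)\,dm(b)\,d\mu(s) = \int_B \phi(g^{-1}b)\,d(\mu * m)(b),
\]
after a change of variables; but $\mu * m = m$ by $\mu$-harmonicity of $m$, so this identity is automatic and says nothing. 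So instead I would consider the measure $g \mapsto P\phi(g)$ as a function on $G$ and exploit that the \emph{space} of bounded right $\mu$-harmonic functions is identified, via $\check{f}(g) = f(g^{-1})$, with bounded left $\mu$-harmonic functions for $\cmu$, hence with $L^\infty(\check B, \check m)$. A bi-$\mu$-harmonic $f$ thus yields simultaneously an element of $L^\infty(B,m)$ and of $L^\infty(\check B,\check m)$, and one checks these two representations force $\phi$ to be $G$-invariant: concretely, the martingale $n \mapsto \phi(X_n \cdot)$ along the $\mu$-random walk converges to $\phi$ at the boundary, while right-harmonicity makes the reversed walk also converge, and the only way both one-sided tails agree with $\phi$ is if $\phi$ is invariant under the full $G$-action on $B$. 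Third, since $(B,m)$ is ergodic, $G$-invariance of $\phi \in L^\infty(B,m)$ forces $\phi$ to be $m$-a.e. constant, whence $f = P\phi$ is constant on $G$. For the final assertion, if $G$ is abelian then every left $\mu$-harmonic function is automatically right $\mu$-harmonic (since $f(sg) = f(gs)$), so it is bi-$\mu$-harmonic, hence constant.

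**The main obstacle** is making rigorous the claim that a bi-$\mu$-harmonic function gives a $G$-\emph{invariant} boundary function, rather than merely a $\mu$-stationary one --- the first two steps only use one-sided harmonicity and give no new information on their own. The cleanest route is probably to argue at the level of the boundary directly: the Poisson transform for $(G,\mu)$ realizes $\cH^\infty_l(G,\mu) \cong L^\infty(B,m)$, and an independent argument (using that $f$ is also right-harmonic, so that $\check f \in \cH^\infty_l(G,\cmu) \cong L^\infty(\check B,\check m)$) shows that the element $\phi$ representing $f$ must lie in the image of the constants under both transforms. Alternatively --- and this is likely the intended proof --- one observes that $h(g) = f(g)$ restricted along any coset, combined with a direct computation showing $\int_B |\phi(g^{-1}b) - \phi(b)|^2\,dm(b)$ telescopes to zero under the combined left/right harmonicity, exactly as in the proof of Lemma \ref{harmonic constant}; this reduces everything to the ergodicity of $(B,m)$ and avoids any appeal to boundary convergence theorems. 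I would pursue this second approach, as it keeps the argument self-contained and uses only tools already developed in the excerpt.
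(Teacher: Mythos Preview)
Your overall architecture matches the paper's exactly: write $f = P\phi$, show $\phi$ is $G$-invariant, conclude by ergodicity. You also correctly identify Lemma~\ref{harmonic constant} as the relevant tool. But there is a genuine error in the middle that leaves a gap.

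The computation you dismiss as ``saying nothing'' is precisely the one that does all the work. Your claimed change of variables
\[
\int_G \int_B \phi(s^{-1}g^{-1}b)\,dm(b)\,d\mu(s) \;=\; \int_B \phi(g^{-1}b)\,d(\mu * m)(b)
\]
is false unless $G$ is abelian: the element $s^{-1}$ sits to the \emph{left} of $g^{-1}$, so it cannot be absorbed into the measure on $b$. The correct reading is to view $s^{-1}$ as acting on the point $g^{-1}b$, which gives
\[
\int_G \int_B \phi\big(s^{-1}(g^{-1}b)\big)\,dm(b)\,d\mu(s) \;=\; \int_B (Q_\mu\phi)(g^{-1}b)\,dm(b) \;=\; P(Q_\mu\phi)(g),
\]
where $Q_\mu\phi = \int_G \phi(s^{-1}\cdot)\,d\mu(s)$. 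Right $\mu$-harmonicity of $f$ therefore says $P(Q_\mu\phi) = P\phi$, and since $P$ is an isometric isomorphism (Proposition~\ref{furstenberg}), this forces $Q_\mu\phi = \phi$ in $L^\infty(B,m)$. That is exactly the hypothesis of Lemma~\ref{harmonic constant} applied to the $(G,\mu)$-space $(B,m)$, and the lemma then gives $G$-invariance of $\phi$. This is the paper's proof, in two lines; your detour through $(\check B,\check m)$ and martingale tails is unnecessary, and your ``alternative'' at the end cannot be carried out without first knowing $Q_\mu\phi = \phi$, which is what you threw away.

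A smaller point: your deduction of the abelian case is only correct when $\mu$ is symmetric. For general $\mu$, left $\mu$-harmonicity plus commutativity gives right $\check\mu$-harmonicity, not right $\mu$-harmonicity; the paper handles this in the remark following the corollary via the symmetrization $\psi(g) = \phi(g) + \phi(g^{-1})$.
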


\begin{remark}
The last assertion is immediate if $\mu$ is symmetric. If it is
not and $\phi$ is a bounded left $\mu$-harmonic function
with $\phi(e) = 0$ (and hence right $\cmu$-harmonic), then 
the symmetrized function
\[
\psi(g) = \phi(g) + \phi(g^{-1}), \quad g \in G,
\]
is a bounded left and right $\mu$-harmonic and thus identically
zero by the corollary above. This forces the identities 
$\phi(g) = - \phi(g^{-1})$ for all $g \in G$, and thus 
$\phi$ is both left and right $\mu$-harmonic, and hence
constant.
\end{remark}

\begin{proof}
Let $(B,m)$ denote the Poisson boundary of $(G,\mu)$ and suppose $f$ is 
a bounded left and right $\mu$-harmonic function on $G$. Let $\phi$ 
denote the unique element in $L^\infty(B,m)$ such that $f = P\phi$, 
where $P$ is the linear map defined in Proposition \ref{furstenberg}. Note
that the uniqueness of $\phi$ forces the identity
\[
\int_X \phi(g^{-1} \cdot ) \, d\mu(g) = \phi \quad \textrm{in $L^\infty(B,m)$}.
\]
By the last lemma, we conclude that $\phi$ is $G$-invariant and thus 
constant, since $(B,m)$ is ergodic. 
\end{proof}

\section{Weak mixing for $(G,\mu)$-spaces}

The aim of this section is to characterize weakly mixing $(G,\mu)$-spaces 
as exactly those which do not admit any non-trivial probability measure preserving factors with discrete spectrum. The author has not been able to 
locate an explicit formulation of this equivalence in the literature, although it should be stressed that
the characterization does follow from applying a series of classical and well-known techniques combined with the fact that WAP-actions are $\mu$-stiff in the sense of Furstenberg, which was established in \cite{FG10}. However, as the proof of 
this fact utilizes some serious machinery from the theory of Ellis semigroups and 
weakly almost
periodic functions, the route to the characterization of weakly mixing 
$(G,\mu)$-spaces (following these lines) is not very direct. We shall try to 
outline below a more direct approach. 

First recall that a non-singular $G$-space $(X,\nu)$ is \emph{weakly mixing}
if for every ergodic probability measure preserving $G$-space $(Y,\eta)$, the
diagonal $G$-action on $(X \times Y,\nu \otimes \eta)$ is ergodic. If $\nu$
is $G$-invariant, then this is equivalent to the absence of a non-trivial factor
with discrete spectrum, that is to say, a probability measure preserving 
$G$-space $(Z,\xi)$ with the property that the corresponding unitary (Koopman) representation on $L^2(Z,\xi)$ decomposes into a 
\emph{direct sum} of finite dimensional sub-representations. By a classical
theorem of Mackey in \cite{Ma}, an ergodic probability measure preserving $G$-space
with discrete spectrum is very special. Indeed, it
is always isomorphic to an isometric $G$-action on a compact homogeneous space,
that is to say, there exists a compact group $K$ and a closed 
subgroup $K_o < K$ and a homomorphism $\tau : G \ra K$
with dense image such that the $\tau(G)$-action on 
$K/K_o$ (with the Haar probability measure)
is isomorphic (as a $G$-space) to $(Z,\xi)$. In particular, if $G$
is a minimally almost periodic group (which means that there are
no non-trivial finite 
dimensional unitary $G$-representations whatsoever), then every
ergodic probability measure preserving $G$-space is automatically
weakly mixing. 

It is not true in general that an ergodic non-weakly mixing non-singular
$G$-space admits a probability measure preserving factor with discrete 
spectrum. In fact, Aaronson and Nadkarni constructs in \cite{AN87} a 
probability measure on a compact group, which is non-singular and 
ergodic with respect to dense cyclic subgroup, so that the corresponding
ergodic non-singular $\bZ$-space (which is certainly not weakly mixing) 
does not admit any non-trivial probability measure preserving factors whatsoever. 

However, in the category of $(G,\mu)$-spaces the situation is much nicer 
and the aim of this section is to give a self-contained proof of the following 
theorem, which is not new and certainly known to experts.

\begin{theorem}[Characterization of weak mixing]
\label{WM}
Let $(G,\mu)$ be a measured group and suppose $(X,\nu)$ is an 
ergodic $(G,\mu)$-space. Then $(X,\nu)$ is weakly mixing if and
only if $(X,\nu)$ does not admit a non-trivial probability measure preserving 
factor with discrete spectrum. In particular, if $G$ is minimally 
almost periodic, then every ergodic $(G,\mu)$-space is weakly
mixing.
\end{theorem}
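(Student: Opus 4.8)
The plan is to prove the stated equivalence; granted it, the last assertion follows at once, for by Mackey's theorem recalled above a non-trivial ergodic probability measure preserving $G$-space with discrete spectrum is a homogeneous space $K/K_o$ of a compact group $K$ carrying a homomorphism $\tau:G\to K$ with dense image, and here $K$ is non-trivial since $K/K_o$ is; by the Peter--Weyl theorem $K$ admits a non-trivial continuous finite-dimensional unitary representation, whose composition with $\tau$ is, by density of $\tau(G)$, a non-trivial finite-dimensional unitary $G$-representation, contradicting minimal almost periodicity.

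We prove each implication of the equivalence by contraposition. For ``only if'', assume $(X,\nu)$ has a non-trivial probability measure preserving factor $(Z,\xi)$ with discrete spectrum, which by Mackey's theorem we may take to be $Z=K/K_o$ with its Haar measure, $K_o<K$ a proper closed subgroup of a compact group $K$, and $\tau:G\to K$ with dense image; let $\pi:X\to Z$ be the factor map. The $K$-invariant map $(\kappa_1K_o,\kappa_2K_o)\mapsto K_o\kappa_1^{-1}\kappa_2K_o$ into the double coset space $K_o\backslash K/K_o$ is non-constant (that space has at least two points and the image of $\xi\otimes\xi$ under the map has full support on it), so the diagonal $K$-action on $(Z\times Z,\xi\otimes\xi)$ is not ergodic, and hence neither is the diagonal action of the dense subgroup $\tau(G)$. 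Pulling this non-ergodicity back along the $G$-equivariant measure preserving map $\pi\times\mathrm{id}:(X\times Z,\nu\otimes\xi)\to(Z\times Z,\xi\otimes\xi)$ shows that $(X\times Z,\nu\otimes\xi)$ is not ergodic; since $(Z,\xi)$ is an ergodic probability measure preserving $G$-space, $(X,\nu)$ is not weakly mixing.

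For ``if'', assume $(X,\nu)$ is not weakly mixing, so that some ergodic probability measure preserving $G$-space $(Y,\eta)$ makes $(X\times Y,\nu\otimes\eta)$ — itself a $(G,\mu)$-space, as $\eta$ is $G$-invariant — carry a non-zero $G$-invariant $f\in L^\infty(X\times Y)$ of zero integral. Writing $f_x=f(x,\cdot)\in L^2(Y,\eta)$, the function $x\mapsto\int_Y f(x,y)\,d\eta(y)$ is $G$-invariant on the ergodic space $(X,\nu)$ and has zero mean, hence vanishes a.e., so $f_x\in L^2_0(Y,\eta)$ for a.e.\ $x$, and $x\mapsto f_x$ is $G$-equivariant for the unitary $G$-action on $L^2(Y,\eta)$. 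The Hilbert--Schmidt operator $S:L^2(X,\nu)\to L^2_0(Y,\eta)$ given by $S\varphi=\int_X\varphi(x)\,f_x\,d\nu(x)$ is non-zero, so $T=SS^*$ is a non-zero positive compact self-adjoint operator on $L^2_0(Y,\eta)$; its integral kernel $(y,y')\mapsto\int_X f(x,y)\,\overline{f(x,y')}\,d\nu(x)$ is $\mu$-stationary, hence $G$-invariant, on the probability measure preserving $G$-space $Y\times Y$, so $T$ commutes with the unitary $G$-representation on $L^2(Y,\eta)$. Spectral theory of compact operators then yields a real $\lambda>0$ whose $T$-eigenspace $E\subset L^2_0(Y,\eta)$ is non-zero, finite-dimensional and $G$-invariant; moreover $\ker T=(\overline{\operatorname{range}S})^{\perp}$, so every $f_x$ is orthogonal to $\ker T$, and choosing $\lambda$ so that $\int_X\|P_E f_x\|^2\,d\nu(x)>0$ ($P_E$ the orthogonal projection onto $E$) we obtain $\psi_x:=P_E f_x\neq0$ for a.e.\ $x$. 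Now $x\mapsto\psi_x$ is a $G$-equivariant map from $X$ into the compact sphere of radius $c:=\|\psi_x\|$ — a constant, by ergodicity of $X$ — in the finite-dimensional Hilbert space $E$, on which $G$ acts by a unitary representation $\rho:G\to\U(E)$; it is non-constant, since a constant value would be a $G$-fixed vector in $E\subset L^2_0(Y,\eta)$, hence $0$. Fixing an orthonormal basis $(e_1,\dots,e_k)$ of $E$, the bounded functions $\varphi_i(x)=\langle\psi_x,e_i\rangle$ satisfy $\sum_i|\varphi_i|^2=c^2$ and are permuted linearly by $G$ through $\rho$, so they generate a non-trivial $G$-invariant sub-$\sigma$-algebra of $X$ and hence a factor of $(X,\nu)$, which we realize as a compact $G$-invariant subset $Z$ of the sphere of radius $c$ in $\bC^k$ — with $G$ acting linearly via $\rho$ — carrying the push-forward measure $\xi$. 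Two things remain: first, $\xi$ is $G$-invariant, because the $G$-action extends to the isometric action of the compact group $\bK:=\overline{\rho(G)}\subset\U(k)$, because $\xi$ is stationary for the push-forward of $\mu$ to $\bK$, and because a stationary probability measure for a probability measure on a compact group whose support generates a dense subgroup is invariant — Ces\`aro-averaging the stationarity relation and using that the Ces\`aro averages of the convolution powers converge weakly to the Haar measure; second, $(Z,\xi)$ has discrete spectrum, because polynomials in the coordinate functions and their conjugates are dense in $L^2(Z,\xi)$ by Stone--Weierstrass, the span $V_N$ of monomials of degree at most $N$ is finite-dimensional and $G$-invariant, and $L^2(Z,\xi)=V_1\oplus\bigoplus_{N\geq1}(V_{N+1}\ominus V_N)$ is a Hilbert sum of finite-dimensional $G$-invariant subspaces. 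Hence $(X,\nu)$ has a non-trivial probability measure preserving factor with discrete spectrum, as required.

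I expect the main obstacle to be the ``if'' direction, and precisely the passage from the $G$-invariant function $f$ on $X\times Y$ to a genuine factor of $X$ itself: since $L^2(X,\nu)$ carries no unitary $G$-action one cannot look for eigenfunctions ``inside $X$'' as in the classical measure preserving case, and the device is to manufacture the finite-dimensional $G$-invariant subspace $E\subset L^2(Y,\eta)$ on the $Y$-side and use it as the target of a $G$-equivariant map out of $X$; the remaining points — upgrading $\mu$-stationarity of the push-forward measure to $G$-invariance via compactness of $\bK$, and checking discreteness of the resulting spectrum — are then routine.
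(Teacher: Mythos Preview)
Your proof is correct and shares the same core mechanism as the paper's: from a non-constant $G$-invariant $f$ on $X\times Y$ one passes to the map $x\mapsto f_x\in L^2(Y,\eta)$ and exploits a compact self-adjoint operator on $L^2(Y,\eta)$ commuting with the Koopman $G$-action to extract a finite-dimensional $G$-invariant subspace, which then yields the discrete-spectrum factor. The organization, however, is different. The paper first pushes $\nu$ forward to the unit ball $B_1(L^2(Y,\eta))$, abstracts to the notion of a \emph{Hilbertian $(G,\mu)$-space}, proves directly (via strict convexity of Hilbert balls applied to tensor powers) that any such space is measure-preserving, and only then invokes the compact operator $K_\xi$ to show the measure is concentrated on the ``discrete'' part $\cH_0$. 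You reverse the order: you build the compact operator $T=SS^*$ first, use its eigenspace $E$ to land in a \emph{finite-dimensional} target from the outset, and then upgrade $\mu$-stationarity of the push-forward to $G$-invariance by passing to the compact closure $\overline{\rho(G)}\subset\U(k)$ and Ces\`aro-averaging convolution powers toward Haar measure. Your route buys concreteness (no need for the Hilbertian abstraction or the decomposition $\cH=\cH_0\oplus\cH_1$), while the paper's route isolates a reusable proposition about arbitrary Hilbertian $(G,\mu)$-spaces. One small remark: your kernel $K(y,y')=\int_X f(x,y)\overline{f(x,y')}\,d\nu(x)$ actually satisfies $\int_G K(g\cdot,g\cdot)\,d\mu(g)=K$ (not the $g^{-1}$ version), but since $Y\times Y$ is measure-preserving this still forces $G$-invariance by the same $L^2$-expansion argument as in Lemma~\ref{harmonic constant}.
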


\begin{remark}
In particular this theorem applies to the Poisson boundary of $(G,\mu)$,
which certainly does not have any probability measure preserving factors
whatsoever, and thereby giving yet another proof of the weak mixing of 
Poisson boundaries, originally due to Aaronson and Lema\'nczyk in \cite{AL05}. Note however that the theorem does \emph{not} directly 
apply to the setting of Theorem \ref{Kde} since products of Poisson 
boundaries are not $(G,\mu)$-spaces in general (unless of course, 
they are trivial). 
\end{remark}

Let us now begin the proof of Theorem \ref{WM}, which naturally falls into
two steps, both of which are essentially classical, and only the first step 
needs to be complemented with a less classical argument concerning 
probability measure preserving factors. As we have 
already mentioned above, this argument could be replaced by a nice, 
but not very elementary observation of Furstenberg and Glasner in 
\cite{FG10} about $\mu$-harmonic measures on WAP-spaces. However, 
since no self-contained proof of Theorem \ref{WM} seems to exist in the 
literature, it makes sense to outline a more direct route in this paper, and 
to collect here all the necessary arguments, although we do allow ourselves 
to be a bit sketchy in the more classical arguments. 

For the first step, we let $(G,\mu)$ be a measured group and $(X,\nu)$ is
an ergodic $(G,\mu)$-space. Suppose there exists an ergodic probability
measure preserving $G$-space $(Y,\eta)$ such that the diagonal $G$-action
on $(X \times Y,\nu \otimes \eta)$ is \emph{not} ergodic, that is to say, 
there exists a non-constant essentially bounded real-valued function 
$f$ on $X \times Y$. Without loss of generality, we can assume that $f$ is 
bounded by one, so that the map 
\[
\pi_f : X \ra B_1(L^2(Y,\eta))
\]
given by $\pi_f(x) = f(x,\cdot) \in B_1(L^2(Y,\eta))$ is well-defined for 
almost every $x$ in $X$, where $B_1(L^2(Y,\eta))$ denotes the unit 
ball in the Hilbert space $L^2(Y,\eta)$. Since $f$ is assumed to be 
$G$-invariant, one can readily verify that $\pi_f$ is a (weakly measurable) factor map from
$X$ into the $(G,\mu)$-space $(B_1(L^2(Y,\eta),\pi_*\nu)$, where $G$
acts on the unit ball $B_1(L^2(Y,\eta))$ via the (unitary) Koopman operator on 
$L^2(Y,\eta)$ (recall that $(Y,\eta)$ is measure-preserving). We note that 
since $f$ is non-constant, the corresponding factor is non-trivial. 

More generally, suppose $(\cH,\pi)$ is a unitary $G$-representation on a Hilbert space $\cH$. Then $\pi$ induces a (weakly continuous) action 
of $G$ on the unit ball $B_1(\cH)$, and if $\nu$ is a $\mu$-harmonic 
probability measure (with respect to this $G$-action) on $B_1(\cH)$, 
then we shall refer to $(B_1(\cH),\nu)$ as a \emph{Hilbertian $(G,\mu)$-space}. Theorem \ref{WM} will then follow from the following proposition. 

\begin{proposition}
Every Hilbertian $(G,\mu)$-space is measure-preserving and has 
discrete spectrum. 
\end{proposition}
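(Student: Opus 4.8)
The plan is to show that a $\mu$-harmonic measure $\nu$ on the unit ball $B_1(\mathcal H)$ of a Hilbert space, with respect to the weakly continuous action coming from a unitary representation $\pi$, is automatically supported on the unit \emph{sphere} and is $G$-invariant; ergodicity on the closed convex hull then makes the ``pushforward to the sphere'' unitary structure tractable and one recognizes discrete spectrum. Concretely, first I would exploit convexity: the norm-square function $v \mapsto \|v\|^2$ on $B_1(\mathcal H)$ is convex and $G$-invariant (since $\pi(g)$ is unitary), so by the harmonicity equation $\int_G \langle \pi(g^{-1}) v, \pi(g^{-1}) v\rangle \, d\mu(g)$ pointwise in the barycentric sense forces the barycenter map $\beta(v) = \int_{B_1(\mathcal H)} w \, d(\text{conditional measure})$ to behave rigidly. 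The cleaner route: consider the function $s(v) = \|v\|^2$; it is bounded, and one checks it is $\mu$-superharmonic because the map $v \mapsto \pi(g^{-1})v$ is linear and norm-preserving so actually $s(\pi(g^{-1})v) = s(v)$, i.e. $s$ is $G$-invariant outright. That is not yet enough; the real input is that the \emph{vector-valued} barycenter $b = \int_{B_1(\mathcal H)} v \, d\nu(v) \in \mathcal H$ satisfies, by $\mu$-harmonicity, $\int_G \pi(g^{-1}) b \, d\mu(g) = b$, so $b$ is a $\mu$-harmonic vector; by the standard fact (a consequence of convexity of the norm and the equality case in the triangle/Jensen inequality, or of the Alaoglu--Birkhoff mean ergodic argument) a $\mu$-harmonic vector for a unitary representation is fixed by $\pi(g)$ for every $g$ in the support of $\mu$, hence by all of $G$.

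Next I would upgrade this from the barycenter to the whole measure by a disintegration/decomposition argument. Decompose $\mathcal H = \mathcal H^G \oplus (\mathcal H^G)^\perp$ into the $G$-fixed part and its orthogonal complement; the action preserves this splitting. Writing $v = v_0 + v_1$ accordingly, the second coordinate $v \mapsto v_1$ is $G$-equivariant into $(\mathcal H^G)^\perp$, and on $(\mathcal H^G)^\perp$ the representation has no non-zero harmonic (equivalently, invariant, by the argument just used applied to restrictions) vectors. Applying the harmonic-barycenter fact not to $\nu$ itself but to the conditional measures obtained by disintegrating over the $\sigma$-algebra of $G$-invariant sets — or, more slickly, testing $\|v_1\|^2$: since $v \mapsto v_1$ is equivariant and $\|\cdot\|^2$ is $G$-invariant and convex, one gets that $\int_{B_1(\mathcal H)} \|v_1\|^2 \, d\nu(v)$ is a fixed point of the Markov operator, and a second-moment computation exactly as in Lemma \ref{harmonic constant} (expand $\|\pi(g^{-1})v_1 - v_1'\|^2$ against the harmonic measure) shows the equivariant $\mathcal H$-valued function $v \mapsto v_1$ is essentially $\nu$-constant; being equivariant and constant it takes values in $\mathcal H^G$, but it already lies in $(\mathcal H^G)^\perp$, hence $v_1 = 0$ $\nu$-a.e. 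So $\nu$ is supported on $\mathcal H^G \cap B_1(\mathcal H)$, where $G$ acts trivially — in particular $\nu$ is $G$-invariant, so $(B_1(\mathcal H),\nu)$ is measure-preserving.

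For the discrete-spectrum conclusion, once $\nu$ is $G$-invariant I would argue that the Koopman representation of $G$ on $L^2(B_1(\mathcal H),\nu)$ decomposes into finite-dimensional pieces. Since $G$ acts through its image under $\pi$, and $\nu$ lives on the fixed-point set where $\pi$ is trivial, the action of $G$ on $(B_1(\mathcal H),\nu)$ is \emph{trivial}; hence the Koopman representation is trivial, which is a (very degenerate) direct sum of one-dimensional subrepresentations — so $(B_1(\mathcal H),\nu)$ has discrete spectrum. (Strictly, here ``$B_1(\mathcal H)$'' should be read as the $G$-orbit closure of the support of $\nu$, i.e. we are really proving the statement about the ergodic component carrying $\nu$; this is the natural reading of the proposition.) Combined with the first step, every Hilbertian $(G,\mu)$-space is measure-preserving with discrete spectrum.

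The main obstacle I anticipate is the ``harmonic vector $\Rightarrow$ invariant vector'' step and, more delicately, promoting it from a single vector to the $\mathcal H$-valued equivariant function $v \mapsto v_1$ in an $L^2(\nu)$ sense: one must be careful that the relevant integrals converge (they do, since everything lies in the bounded set $B_1$) and that the expansion of the square uses harmonicity with respect to all convolution powers $\mu^{*k}$, exactly as in the proof of Lemma \ref{harmonic constant}. A secondary subtlety is measurability: $v \mapsto v_1$ is weakly measurable and norm-bounded, hence Bochner-measurable into $\mathcal H$, so the vector-valued integrals make sense. Once these technical points are handled the rest is routine convexity and orthogonality bookkeeping.
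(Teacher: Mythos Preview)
Your central claim---that $\nu$ is supported on the $G$-fixed subspace $\cH^G$---is false, and both halves of your argument rest on it. Take $G = \bZ$, $\cH = \bC$, and $\pi(n)$ equal to multiplication by $e^{in\theta}$ with $\theta \notin 2\pi\bQ$; then $\cH^G = \{0\}$, yet normalized arc-length on the unit circle is $G$-invariant (hence $\mu$-harmonic for every $\mu$) and is certainly not $\delta_0$. The gap is in the ``second-moment'' step: Lemma~\ref{harmonic constant} requires its function to satisfy $s = \int_G s(g^{-1}\cdot)\,d\mu(g)$, but for the equivariant $\cH$-valued map $F(v) = v_1$ one has $\int_G F(\pi(g)^{-1}v)\,d\mu(g) = \pi(\cmu)F(v)$, which equals $F(v)$ only when $F(v)$ is already a fixed vector---precisely what you are trying to prove. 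Accordingly the expansion of $\int_G\int \|\pi(g)^{-1}v_1 - v_1\|^2\,d\nu\,d\mu^{*k}$ does not vanish; in the circle example it equals $2\bigl(1 - \re\,\widehat{\mu}(\theta)^k\bigr) > 0$. Knowing that the scalar function $v \mapsto \|v_1\|^2$ is $G$-invariant tells you nothing beyond what you already knew, and the disintegration over invariant sets does not help either: in the circle example $\nu$ is already ergodic and its barycenter is $0$, yet $\nu \neq \delta_0$.

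The paper's route differs in both steps. For $G$-invariance of $\nu$ it tests not just the barycenter but all tensor moments $\xi_k = \int v^{\otimes k}\,d\nu \in \cH^{\otimes k}$: each satisfies $\pi^{\otimes k}(\mu)\xi_k = \xi_k$ and is hence $G$-fixed by the strict-convexity lemma you correctly cite, and Stone--Weierstrass on $C(B_1(\cH))$ then yields invariance of $\nu$. Your barycenter argument is exactly the case $k=1$; the missing idea is the passage to all tensor powers. For discrete spectrum the paper splits $\cH = \cH_o \oplus \cH_1$ with $\cH_o$ the closure of vectors with \emph{finite-dimensional} cyclic span---strictly larger than $\cH^G$ in general---and shows, via the compact self-adjoint intertwiner on $\cH$ induced by $\int v \otimes v^*\,d\nu$, that any $G$-invariant probability measure on $B_1(\cH_1)$ is $\delta_0$; the action on the support of $\nu$ is then through $\cH_o$, which by construction has discrete spectrum.
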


We begin by proving that Hilbertian $(G, \mu)$-spaces are measure-preserving. To do so, we first note that by Stone-Weierstrass Theorem, 
the linear span of the constants and all functions of the form
\begin{equation}
\label{prods}
\phi(x) 
= 
\langle y_1, x\rangle \cdots \langle y_k, x\rangle , \quad x \in B_1(\cH),
\quad y_1, \ldots, y_k \in \cH
\end{equation}
is dense in $C(B_1(\cH))$, when $B_1(\cH)$ is equipped with the weak 
topology, and we wish to prove that
\begin{equation}
\label{harmeq}
\int_X \phi(g^{-1}x) \, d\nu(x) = \int_X \phi(x) \, d\nu(x), \quad \forall \, g \in G,
\end{equation}
for all $y_1,\ldots,y_k \in \cH$. Since
\begin{eqnarray*}
\int_G \int_{B_1(\cH)} \phi(g^{-1}x) \, d\nu(x) \, d\mu(g) 
&=&
\Big\langle y_1 \otimes \cdots \otimes y_k, 
\int_G \pi^{\otimes k}(g) \xi_\nu \, d\mu(g)
\Big\rangle, \\
&=&
\Big\langle y_1 \otimes \cdots \otimes y_k, 
\xi_\nu
\Big\rangle,
\end{eqnarray*}
for all $y_1,\ldots,y_k \in \cH$, where $\pi^{\otimes k}$ denotes the $k$-th
tensor product representation of $(\cH,\pi)$, and
\[
\xi_\nu = \int_X x \otimes \cdots \otimes x \, d\nu(x),
\]
we can conclude that 
\[
\pi^{\otimes k}(\mu) \xi_\nu
= 
\int_G \pi^{\otimes k}(g) \xi_\nu \, d\mu(g) = \xi_\nu.
\]
Hence \eqref{harmeq} will follow from the following simple lemma (applied to
all finite tensor product representations of $(\cH,\pi)$).
\begin{lemma}
Let $(\cH,\pi)$ be a unitary $G$-representation and suppose $\xi \in \cH$
satisfies $\pi(\mu)\xi = \xi$. Then $\xi$ is $\pi(G)$-invariant.
\end{lemma}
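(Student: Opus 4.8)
The plan is to exploit the uniform convexity of the Hilbert norm together with the fact that each $\pi(g)$ is a unitary, hence norm-preserving. First I would write $\pi(\mu)\xi = \int_G \pi(g)\xi \, d\mu(g)$ and expand, using $\|\pi(g)\xi\| = \|\xi\|$ for all $g$,
\[
\int_G \big\| \pi(g)\xi - \xi \big\|^2 \, d\mu(g) = \int_G \big( 2\|\xi\|^2 - 2\re\langle \pi(g)\xi, \xi\rangle \big) \, d\mu(g) = 2\|\xi\|^2 - 2\re\big\langle \pi(\mu)\xi, \xi\big\rangle.
\]
By hypothesis $\pi(\mu)\xi = \xi$, so the right-hand side equals $2\|\xi\|^2 - 2\|\xi\|^2 = 0$. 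Since the integrand $\|\pi(g)\xi - \xi\|^2$ is non-negative, this forces $\pi(g)\xi = \xi$ for $\mu$-almost every $g$, i.e. for every $g$ in the support of $\mu$.

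To conclude, I would invoke the standing assumption (part of the definition of a measured group) that the support of $\mu$ generates $G$ as a semigroup: writing an arbitrary $h \in G$ as a product $h = g_1 \cdots g_k$ with each $g_i \in \supp \mu$ and using the multiplicativity of $\pi$ gives $\pi(h)\xi = \pi(g_1) \cdots \pi(g_k)\xi = \xi$. Hence $\xi$ is $\pi(G)$-invariant, as claimed.

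There is no real obstacle here; the only points needing (mild) care are the interchange of the integral with the inner product and with $\re$, both immediate since $\mu$ is a probability measure on a countable group and the relevant integrands are bounded. One could alternatively phrase the argument through strict convexity of the unit ball — equality in $\|\int_G \pi(g)\xi \, d\mu(g)\| \le \int_G \|\pi(g)\xi\| \, d\mu(g)$ forces the vectors $\pi(g)\xi$ to agree for $\mu$-a.e. $g$ — but the displayed computation is the most direct route.
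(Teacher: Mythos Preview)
Your proof is correct and essentially the same as the paper's: the paper invokes strict convexity of the unit ball in $\cH$ to conclude that a convex average of unit vectors equalling a unit vector forces all the vectors to coincide, which is exactly the alternative phrasing you mention at the end, and your displayed computation is simply the explicit verification of that fact. Both arguments then finish identically by using that $\supp\mu$ generates $G$.
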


\begin{proof}
We may without loss of generality assume that $\|\xi\| = 1$. Since $\pi$ is
unitary, the equation $\pi(\mu)\xi = \xi$ simply means that a convex 
average of points of the form $\pi(g)\xi$, for $g$ in the support of $\mu$,
equals $\xi$. However, by the strict convexity of the unit ball in $\cH$, this
can only happen if $\pi(g)\xi = \xi$ for all $g$ in the support of $\mu$. 
Since the support of $\mu$ is assumed to generate $G$, we conclude that
$\xi$ is $\pi(G)$-invariant. 
\end{proof}

It remains to show that the measure-preserving $G$-space $(B_1(\cH),\nu)$
has discrete spectrum. For this purpose, we define the closed linear subspace
\[
\cH_o = \overline{\Big\{ v \in \cH \, : \, \textrm{the cyclic span of $v$ is finite-dimensional}
\Big\}} \subset \cH.
\]
One readily checks that $\cH_o$ is a sub-representation of $\cH$, and thus
its orthogonal complement $\cH_1$ is a sub-representation with the property
that it does not have any finite-dimensional sub-representations whatsoever.
Furthermore, we have
\[
B_1(\cH) = \Big\{ (\xi,\eta) \in \cH_o \oplus \cH_1 \, : \, \|\xi\|_0^2 + \|\eta\|_1^2 \leq 1 \Big\} \subset B_1(\cH_o) \times B_1(\cH_1).
\]
We have canonical continuous $G$-equivariant projections $\pi_o$ and $\pi_1$ from $B_1(\cH)$ onto $B_1(\cH_o)$ and $B_1(\cH_1)$ respectively,
and it is not hard to show that the $G$-space $(B_1(\cH_o),\pi_o)_*\nu)$
has discrete spectrum. Hence it suffices to show the following lemma. 
 
\begin{lemma}
Suppose $(\cH,\pi)$ is a unitary $G$-representation with no (non-trivial) finite-dimensional sub-representations. If $\nu$ is a $G$-invariant probability measure on $B_1(\cH)$, then it is concentrated at zero. 
\end{lemma}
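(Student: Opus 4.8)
The plan is to encode $\nu$ into a single bounded operator on $\cH$ --- its covariance operator --- and then to exploit two facts: an invariant measure forces this operator to be equivariant, while its compactness lets it manufacture finite-dimensional sub-representations unless it vanishes. Concretely, I would define $T$ on $\cH$ by the sesquilinear form $(\xi,\eta)\mapsto \int_{B_1(\cH)}\langle\xi,x\rangle\langle x,\eta\rangle\,d\nu(x)$; on $B_1(\cH)$ the integrand is weakly continuous and bounded by $\|\xi\|\,\|\eta\|$, so this form is well defined and is represented by a positive self-adjoint operator $T$ with $\|T\|\le 1$.

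First I would check that $T$ is $\pi(G)$-equivariant. This is a one-line change of variables: since each $\pi(g)$ is unitary and $\nu$ is $\pi(G)$-invariant, one gets $\pi(g^{-1})T\pi(g)=T$ for every $g\in G$, i.e.\ $T\pi(g)=\pi(g)T$. Next I would observe that $T$ is trace class: summing $\langle Te_i,e_i\rangle$ over an orthonormal basis $(e_i)$ of $\cH$ and interchanging the (non-negative) sum with the integral yields $\trace(T)=\int_{B_1(\cH)}\|x\|^{2}\,d\nu(x)\le 1$, and a positive operator of finite trace is trace class, hence compact.

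Now the argument closes itself. If $T\neq 0$, then being positive and compact it has a strictly positive eigenvalue $\lambda$, and the eigenspace $E_\lambda=\ker(T-\lambda)$ is a nonzero finite-dimensional closed subspace of $\cH$; by equivariance it is $\pi(G)$-invariant, hence a non-trivial finite-dimensional sub-representation of $(\cH,\pi)$, contradicting the hypothesis. Therefore $T=0$, so $\int_{B_1(\cH)}\|x\|^{2}\,d\nu(x)=\trace(T)=0$, which forces $x=0$ for $\nu$-almost every $x$; that is, $\nu=\delta_0$.

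I do not expect a real obstacle here: essentially all the content lies in the idea of passing from the invariant measure to the invariant covariance operator and invoking compactness. The only points deserving a word of care are that the weak integral defining $T$ makes sense (immediate from weak continuity and uniform boundedness of the integrand on the unit ball) and the monotone-convergence interchange in the trace computation; both are routine.
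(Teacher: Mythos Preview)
Your argument is correct and is essentially the paper's own proof: the paper builds the same covariance operator (phrased there as the vector $\xi=\int x\otimes x^*\,d\nu$ in $\cH\otimes\cH^*$ and the induced map $K_\xi$), notes that it is compact, self-adjoint and intertwines $\pi$, and then invokes the spectral theorem to produce finite-dimensional invariant eigenspaces. Your explicit trace-class computation $\trace(T)=\int\|x\|^2\,d\nu\le 1$ is a clean justification of the compactness that the paper only asserts, and it also gives you the conclusion $\nu=\delta_0$ in one stroke.
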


\begin{proof}[Sketch of proof]
Since $C(B_1(\cH))$ is generated by limits of linear combinations of products
of the form as in \eqref{prods}, it suffices to show that
\begin{equation}
\label{suff}
\int_{B_1(\cH)} \big|\langle y,x\rangle\big|^2 \, d\nu(x) = 0, \quad \forall \, y \in \cH.
\end{equation}
In order to establish \eqref{suff}, we note that
\[
\int_{B_1(\cH)} \big|\langle y,x\rangle\big|^2 \, d\nu(x)
= 
\big\langle y \otimes y^*, \int_{B_1(\cH)} x \otimes x^* \, d\nu(x) \big\rangle, \quad \forall \, y \in \cH,
\]
where the $*$ refers to complex conjugation, and since $\nu$ is $G$-invariant, the vector
\[
\xi = \int_{B_1(\cH)} (x \otimes x^*) \, d\nu(x) \in \cH \otimes \cH^*
\]
is invariant under $\pi \otimes \pi^*(G)$. We wish to show that $\xi$ is
zero. To do so, we note that $\xi$ induces a \emph{compact and self-adjoint} linear map $K_\xi : \cH \ra \cH$ which is uniquely determined by
\[
\langle y,K_\xi z\rangle = \langle y \otimes z^*, \xi \rangle, \quad \forall \, y, z \in \cH.
\]
One readily checks that $K_\xi$ intertwines the representation $\pi$. By the spectral theorem for compact and self-adjoint linear maps, $\cH$ decomposes into a direct sum of the kernel of $K_\xi$ and \emph{finite}-dimensional eigenspaces for $K_\xi$. Since 
$\pi(g)$ commutes with $K_\xi$ for every $g$, each of these finite-dimensional 
subspaces must be invariant under $\pi$. However, since $\cH$ is assumed to 
completely lack finite-dimensional sub-representations, only the kernel of $K_\xi$
remains and we conclude that $K_\xi$ is trivial, i.e. $\xi$ is zero, which
finishes the proof. 
\end{proof}

\section{Biharmonic functions, coboundaries and central limit theorems}
In this section we shall discuss a novel perspective on a powerful
classical technique, often attributed to S.V. Nagaev \cite{Na1}, which
is designed to prove central limit theorems for certain classes of Markov chains. 
This technique is discussed at length in the book \cite{HeHe}, but in this paper we shall approach it in a slightly different way in the setting of 
random walks on groups.

We begin by describing a motivating example. Let $(G,\mu)$ be a
measured group and suppose $d$ is a left invariant distance function
on $G$ which satisfies the moment condition
\[
\int_G d(g,e)^{2+\eps} \, d\mu(g) < \infty, \quad \textrm{for some $\eps > 0.$}
\]
Let $(\Omega,\cP) = (G^{\bZ},\mu^{\bZ})$ and if $\omega$ is an 
element in $\Omega$, then we denote by $\omega_n$ the $n$'th
coordinate of $\omega$. One readily checks that $(\omega_n)$ is 
a sequence of independent $\mu$-distributed random variables on 
$G$, and we define $(z_n)$ to be the corresponding random walk, 
i.e. 
\[
z_n(\omega) = \omega_o \cdots \omega_{n-1}, \quad n \geq 1.
\] 
We note that the limit 
\[
\ell_d(\mu) = \lim_n \frac{1}{n} \int_\Omega d(z_n(\omega),e) \, d\bP(\omega) 
= 
\lim_n \frac{1}{n} \int_G d(g,e) \, d\mu^{*n}(g)
\]
exists and coincides with the drift of $(G,\mu,d)$ defined in the first
section of this paper. It follows from Theorem \ref{KL} that $\ell_d(\mu)$
is positive whenever $(G,\mu)$ is not Liouville, so in particular the drift
is positive if $G$ is non-amenable. In this case, the sequence 
\begin{equation}
\label{normseq}
Y_n = \frac{d(z_n,e) - n\ell_d(\mu)}{\sqrt{n}}, \quad  n \geq 1,
\end{equation}
of random variables fluctuates around zero, and it makes sense to ask
whether it has a non-trivial distributional limit. 

The aim of this section
is to outline a technique which isolates a class of triples $(G,\mu,d)$ for which the sequence $(Y_n)$ defined in \ref{normseq}
converges weakly to a non-degenerate Gaussian distribution on the real
line, that is to say, we wish to impose natural conditions on $G$, $\mu$
and $d$ such that for every continuous function $\varphi$ on $\bR$ with compact support, we have
\begin{equation}
\label{clt}
\lim_n 
\int_\Omega \varphi\Big(\frac{d(z_n,e) - n\ell_d(\mu)}{\sqrt{n}}\Big) \, d\bP
=
\frac{1}{\sigma \sqrt{2\pi}} \int_{-\infty}^\infty \varphi(t) e^{-t^2/2\sigma^2} \, dt,
\end{equation}
for some constant $\sigma > 0$. In probability theory, this convergence
is usually denoted by $Y_n \Rightarrow N(0,\sigma^2)$, and we shall adopt
this notation in this paper.

\subsection{Biharmonicity and central limit theorems}
Let us now briefly outline how the technique of S.V. Nagaev works in this setting. Its starting point is the fundamental observation (Proposition \ref{fund} below) that the values of \emph{biharmonic functions} along random walks always satisfy, under very weak assumptions, a central limit theorem. To make this
observation precise, we first recall that a real-valued function $\phi$ on $G$ is 
\emph{left Lipschitz} if the function
\[
\rho_\phi(g) = \sup_{s} \big|\phi(sg) - \phi(s)|, 
\]
is finite for every $g$ in $G$. One observes that if $\phi$ is left Lipschitz, 
then $\rho_\phi$ satisfies the triangle inequality
\[
\rho_\phi(g_1g_2) \leq \rho_{\phi}(g_1) + \rho_{\phi}(g_2), \quad \forall \, g_1, g_2 \in G,
\]
and thus, if $G$ is finitely generated,  it is bounded from above by any 
word metric on $G$. Furthermore, recall that a $\mu$-integrable function 
$\phi$ on $G$ is \emph{left $\mu$-quasiharmonic} if there exists a constant
$\ell(\phi)$ such that
\[
\int_G \phi(sg) \, d\cmu(s)  = \phi(g) + \ell(\phi), \quad \forall \, g \in G,
\]
and \emph{right $\mu$-quasiharmonic} if there exists a constant $r(\phi)$
such that
\[
\int_G \phi(gs) \, d\mu(s)  = \phi(g) + r(\phi), \quad \forall \, g \in G.
\]
Finally, we say that $\phi$ is \emph{bi-$\mu$-quasiharmonic} if it is left
and right $\mu$-quasiharmonic. By letting $g = e$ in the formulas above,
we see that if $\phi$ is bi-$\mu$-quasiharmonic, then $r(\phi) = \ell(\phi)$.\\

The fundamental observation upon which the technique of S.V. Nagaev hings can now be formulated as follows. 

\begin{proposition}
\label{fund}
Let $(G,\mu)$ be a symmetric measured group and suppose $\phi$ is a left Lipschitz, bi-$\mu$-quasiharmonic function on $G$ such that
\[
\int_G \rho_\phi(g)^{2+\eps} \, d\mu(g) < + \infty, \quad \textrm{for some $\eps > 0$.}
\]
If $\phi$ is not identically equal to $\ell(\phi)$, then there exists $\sigma > 0$ such that
\[
\frac{\phi(z_n) - n\ell(\phi)}{\sqrt{n}} \Rightarrow N(0,\sigma^2).
\]
\end{proposition}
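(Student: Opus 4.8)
The plan is to recognise the statement as a central limit theorem for a martingale: I would produce that martingale, extract a Lindeberg condition from the moment hypothesis, and identify the limiting variance, the only genuinely delicate point being that this variance is a positive \emph{deterministic} constant. We may assume $\phi(e)=0$, since subtracting the constant $\phi(e)$ alters $\phi(z_n)-n\ell(\phi)$ by a bounded amount and leaves every hypothesis intact. Put $\cF_n=\sigma(\omega_0,\dots,\omega_{n-1})$. Because $z_n=z_{n-1}\omega_{n-1}$ and $\phi$ is right $\mu$-quasiharmonic,
\[
\mathbb{E}\big[\phi(z_n)\mid\cF_{n-1}\big]=\int_G\phi(z_{n-1}s)\,d\mu(s)=\phi(z_{n-1})+r(\phi)=\phi(z_{n-1})+\ell(\phi),
\]
the last equality since $\phi$ is bi-$\mu$-quasiharmonic; hence $M_n:=\phi(z_n)-n\ell(\phi)$ is an $(\cF_n)$-martingale, with increments $D_k=M_k-M_{k-1}=\phi(z_{k-1}\omega_{k-1})-\phi(z_{k-1})-\ell(\phi)$. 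Left Lipschitzness of $\phi$ forces $|D_k|\le\rho_\phi(\omega_{k-1})+|\ell(\phi)|$, so $\sup_k\mathbb{E}|D_k|^{2+\eps}<\infty$, and consequently the Lindeberg condition $\frac1n\sum_{k=1}^n\mathbb{E}\big[D_k^2\,\mathbf{1}_{\{|D_k|>\delta\sqrt n\}}\big]\to0$ holds for every $\delta>0$.

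By the martingale central limit theorem (Brown, McLeish; no stationarity of the $D_k$ is needed) it then suffices to show that $\frac1n\sum_{k=0}^{n-1}V(z_k)$, with $V(g):=\int_G(\phi(gh)-\phi(g)-\ell(\phi))^2\,d\mu(h)$, converges in probability to a positive deterministic constant $\sigma^2$; here $V$ is bounded, by $\int_G(\rho_\phi(h)+|\ell(\phi)|)^2\,d\mu(h)<\infty$. The structural input is that for each $h$ the increment function $v_h(g):=\phi(gh)-\phi(g)$ is a \emph{bounded} left $\mu$-harmonic function (boundedness being $\|v_h\|_\infty\le\rho_\phi(h)$, harmonicity following from left $\mu$-quasiharmonicity of $\phi$ applied at $gh$ and at $g$ — the observation used in the proof of the Proposition of Subsection~\ref{subsecLD}). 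By Furstenberg's Proposition~\ref{furstenberg}, $v_h=P\beta_h$ for a unique $\beta_h\in L^\infty(B,m)$, and transporting the cocycle identity $v_{h_1h_2}(g)=v_{h_2}(gh_1)+v_{h_1}(g)$ through the isomorphism $P$ shows that $(\beta_h)_{h\in G}$ is a real cocycle over the $G$-action on the Poisson boundary. Along the walk the bounded martingale associated to $v_h$ converges $\bP$-a.s. to the boundary value of $\beta_h$, so by dominated convergence (majorant $(\rho_\phi(h)+|\ell(\phi)|)^2\in L^1(\mu)$) one gets $V(z_k)\to\Sigma$ a.s. for a boundary-measurable $\Sigma\ge0$, hence $\frac1n\sum_{k<n}V(z_k)\to\Sigma$ a.s.; feeding the cocycle relation for $(\beta_h)$ into the ergodicity of the Poisson boundary — for which the double ergodicity and ergodicity with unitary coefficients of Section~2 (Theorems~\ref{Kde} and~\ref{K}) are the natural vehicle — then pins $\Sigma$ down to the constant $\sigma^2=\int_B\Sigma\,dm=\lim_n\frac1n\operatorname{Var}(\phi(z_n))$.

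For positivity, suppose $\sigma^2=0$. Then the nonnegative variables $V(z_k)$ tend to $0$ almost surely, which by the identification above forces $v_h\equiv\ell(\phi)$ in $L^\infty(B,m)$ for $\mu$-almost every $h$; propagating this through products of elements of $\supp\mu$ (which generate $G$ as a semigroup) gives first $\ell(\phi)=0$, since $e$ is such a product, and then $v_h\equiv0$ for every $h$, that is $\phi\equiv\phi(e)=0=\ell(\phi)$, contradicting the hypothesis — this degenerate situation is also excluded by Corollary~\ref{CD}. Hence $\sigma>0$, and the martingale central limit theorem yields $(\phi(z_n)-n\ell(\phi))/\sqrt n\Rightarrow N(0,\sigma^2)$.

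The hard part is the penultimate assertion of the second paragraph — that the limiting conditional variance $\Sigma$ is a genuine constant rather than a boundary-measurable random variable — together with making the almost sure convergence of $V(z_k)$ along the \emph{forward} walk precise: the functions $v_h$ are harmonic for the left walk, whereas the martingale $M_n$ lives over the right (forward) walk, so both the convergence and the constancy have to be extracted by playing the forward and the time-reversed pictures against one another, which is exactly where the ergodicity of the product of the two Poisson boundaries enters.
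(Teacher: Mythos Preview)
Your opening coincides with the paper's --- $M_n=\phi(z_n)-n\ell(\phi)$ is a forward martingale by right quasiharmonicity, and the $(2+\eps)$-moment supplies the Lindeberg input to McLeish's CLT --- but the variance step has a genuine gap. You observe correctly that $v_h(g)=\phi(gh)-\phi(g)$ is bounded and \emph{left} $\mu$-harmonic, and then assert that ``the bounded martingale associated to $v_h$'' converges along the forward walk, yielding $V(z_k)\to\Sigma$. But left harmonicity (with $\mu=\cmu$) reads $\int v_h(sg)\,d\mu(s)=v_h(g)$, whereas for $(v_h(z_k))_k$ to be an $(\cF_k)$-martingale one needs $\int v_h(gs)\,d\mu(s)=v_h(g)$, i.e.\ \emph{right} harmonicity; these differ unless $G$ is abelian, so no convergence of $v_h(z_k)$ --- and hence of $V(z_k)$ --- is on offer. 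Invoking Theorems~\ref{Kde} and~\ref{K} does not repair this: double ergodicity speaks to invariance of functions on $B\times\check B$, not to pointwise limits of left-harmonic functions along a right walk. You flag this as the hard part, but as it stands it is the whole proof.

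The paper resolves the issue without the Poisson boundary. It works with the squared increments themselves rather than their conditional expectations $V$, writes $|D_{j+1}|^2=u_j(\tau^j\omega)$ on the two-sided shift with
\[
u_j(\omega)=\bigl|\phi(\omega_{-j}\cdots\omega_0)-\phi(\omega_{-j}\cdots\omega_{-1})-\ell(\phi)\bigr|^2,
\]
and observes that $N_j:=\phi(\omega_{-j}\cdots\omega_0)-\phi(\omega_{-j}\cdots\omega_{-1})-\ell(\phi)$ \emph{is} an $L^2$-bounded martingale in $j$ for the filtration of coordinates $-j,\dots,0$, precisely by \emph{left} quasiharmonicity --- the time reversal is what aligns the left harmonicity with the martingale direction. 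Martingale convergence then gives $u_j\to u$ a.s.\ and in $L^1$, Breiman's Lemma turns $\frac1n\sum_j u_j(\tau^j\omega)$ into the constant $\sigma^2=\int u\,d\bP$, and positivity follows by conditioning $N_\infty=0$ back to $N_{j_0}=0$. So the device you are missing is not boundary ergodicity but time reversal plus Breiman's Lemma.
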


\begin{proof}
Since $\phi$ is a right $\mu$-quasiharmonic function, the sequence 
\[
M_n = \phi(z_n) - n \ell(\phi), \quad n \geq 1,
\]
of measurable functions on $\Omega$ forms a martingale with respect to the filtration generated by the coordinates up to $n-1$. According to the martingale central limit theorem by McLeish in \cite{Mc74}, in order to prove the distributional convergence asserted in the proposition, it suffices to show that the sequence $(\psi_n)$ defined by
\[
\psi_n(\omega) 
= 
\frac{1}{\sqrt{n}} 
\max
\Big\{ 
\big| \phi(z_{j+1}(\omega)) - \phi(z_j(\omega)) - \ell(\phi)\big| \, : \, j=1,\ldots,n-1
\Big\}
\]
is uniformly integrable and $\int_\Omega \psi_n \, d\bP \ra 0$ as $n$ tends
to infinity, and
\begin{equation}
\label{convergence}
\lim_{n} 
\frac{1}{n} \sum_{j=1}^{n-1} 
\big| \phi(z_{j+1}(\omega)) - \phi(z_j(\omega)) - \ell(\phi)\big|^2 
= \sigma^2,
\end{equation}
almost everywhere with respect to $\bP$, where $\sigma$ is a positive 
constant. \\

Recall that by de la Vall\'ee-Poussin Theorem, $(\psi_n)$ is uniformly integrable if, but not only if, 
\begin{equation}
\label{goodbound}
\sup_n \int_\Omega \big|\psi_n\big|^{2+\eps} \, d\bP < \infty,
\end{equation}
and thus to prove the two first assertions it suffices to show that
\[
\int_\Omega |\psi_n|^{2+\eps} \, d\bP \leq \frac{1}{n^{\eps/2}} \int_{G} \rho_\phi(g)^{2+\eps} \, d\mu(g),
\]
since the last integral is finite by assumption. 

First note that the 
shift map $\tau : \Omega \ra \Omega$ given by $\tau(\omega)_n = \omega_{n+1}$ preserves the probability measure $\bP$ on $\Omega$ 
and is ergodic. Secondly, we have
\[
\psi_n(\omega) 
\leq 
\frac{1}{\sqrt{n}} \cdot 
\max\Big\{ \rho_\phi(\omega_j) \, : \, 1 \leq j \leq n-1 \Big\}
\]
for all $n$, so that if we define $v(\omega) = \rho_\phi(\omega_o)$,
then $v \in L^{2+\eps}(\Omega,\bP)$, and it is a straightforward exercise
to show that
\[
\int_\Omega \max_{1 \leq j \leq n-1} |v(\tau^j \omega)|^{2+\eps} \, d\bP \leq 
\frac{1}{n^{\eps/2}} \int_{\Omega} |v(\omega)|^{2+\eps} \, d\bP(\omega) 
=
\frac{1}{n^{\eps/2}} \int_{G} \rho_\phi(g)^{2+\eps} \, d\mu(g) \ra 0,
\]
Hence it remains to show the convergence in \eqref{convergence}. For
this purpose, we define the sequence 
\[
u_j(\omega) = 
\big| \phi(\omega_{-j} \cdots \omega_o) - \phi(\omega_{-j}\cdots \omega_{-1}) - \ell(\phi)\big|^2,
\]
so that we can write 
\[
u_j(\tau^{j}\omega) = \big| \phi(z_{j+1}(\omega)) - \phi(z_j(\omega)) - \ell(\phi)\big|^2, \quad \forall \, j \geq 1.
\]
We wish to prove that there exists a positive constant $\sigma > 0$ such
that 
\[
\sigma^2 = \lim_n \frac{1}{n} \sum_{j=1}^n u_j(\tau^j\omega) 
\]
almost everywhere. By Breiman's Lemma (see e.g. Lemma 14.34 in \cite{Gla}), it suffices to show that 
\[
\int_\Omega \sup_j u_j \, d\bP < \infty
\]
and that there exists a function $u \in L^1(\Omega,\bP)$ such that 
$u_j \ra u$ almost surely and in the $L^1$-norm. Indeed, if this
is the case, then
\[
\sigma^2 = \lim_n \frac{1}{n} \sum_{j=1}^n u_j(\tau^j\omega) 
= \int_\Omega u \, d\bP,
\]
almost surely and $\sigma = 0$ if and only if $u$ vanishes almost everywhere. To prove the existence of a function $u$ as above, we define the sequence 
\[
N_j = \phi(\omega_{-j} \cdots \omega_o) - \phi(\omega_{-j}\cdots \omega_{-1}) - \ell(\phi),
\]
so that $u_j = |N_j|^2$, and since $\phi$ is \emph{left} 
$\mu$-quasiharmonic, we see that $(N_j)$ is a martingale
with respect to the filtration generated by the coordinates 
from $-j$ to $0$. Furthermore, since $\phi$ is left Lipschitz,
we also have that
\[
C = \sup_j \int_\Omega |N_j|^{2} \, d\bP \leq \int_G \rho_\phi(g)^{2} \, d\mu(g) + \ell(\phi)^2 + 2 \cdot \ell(\phi) \cdot \int_G \rho_\phi(g) \, d\mu(g),
\]
and thus $(N_j)$ is a $L^2$-bounded martingale. In particular, by 
the classical Martingale Convergence Theorem, there exists a function
$N_\infty$ in $L^2(\Omega,\bP)$ such that $N_j \ra N_\infty$ almost
everywhere and 
\[
\lim_j \int_\Omega \big|N_j - N_\infty|^2 \, d\bP = 0
\]
and thus, with $u = |N_\infty|^2$, we have $u_j \ra u$ almost everywhere and
\[
\int_\Omega \big|u_j - u\big| \, d\bP \leq 
\int_\Omega \big|(N_j-N_\infty)(N_j + N_\infty)\big| \, d\bP
\leq 4 \cdot C \cdot \int_\Omega \big|N_j - N_\infty|^2 \, d\bP \ra 0.
\]
Finally, we need to show that $u$ does not vanish almost 
everywhere with respect to $\bP$. Note that if $u$
vanishes almost everywhere, then so does $N_\infty$ and 
thus
\[
\lim_j \phi(\omega_{-j} \cdots \omega_o) - \phi(\omega_{-j}\cdots \omega_{-1}) = \ell(\phi)
\]
almost everywhere. Hence, for any fixed $j_o$, by calculating the 
conditional expectation of the limit with respect to the $\sigma$-algebra 
generated by all coordinates strictly below $-j_o$, we conclude that
\[
\phi(\omega_{-j_o} \cdots \omega_{o}) - \phi(\omega_{-j_o} \cdots \omega_{-1}) = \ell(\phi),
\]
almost everywhere. In particular, since $\mu$ is assumed to generate
$G$ as a semigroup, we have $\phi = \ell(\phi)$ everywhere, which we
have assumed is not the case.
\end{proof}

\subsection{Constructing bi-quasiharmonic functions}
We now return to our motivating example. As we have seen
in Subsection \ref{subsecLD}, given any triple $(G,\mu,d)$,
there exists a sequence $(n_j)$ such that the limit
\[
\phi(g) = \lim_{j \ra \infty} \frac{1}{n_j}
\sum_{k=0}^{n_j-1} \int_G \big(d(g,x)-d(x,e)\big) \, d\mu^{*k}(x)
\]
exists for all $g$ in $G$, and the function $\phi$ satisfies 
\[
\int_G \phi(sg) \, d\cmu(s) = \phi(g) + \ell_d(\mu)
\]
and 
\[
\phi(g) \leq d(g,e) \qand \rho_\phi(g) = \sup_{s} \big|\phi(sg) - \phi(s)\big| \leq d(g,e), \quad \forall \, g \in G.
\]
In particular, $\phi$ is left Lipschitz and left $\mu$-quasiharmonic. 
Furthermore, if we write
\[
\frac{d(z_n,e) - n\ell_d(\mu)}{\sqrt{n}} 
= 
\frac{d(z_n,e) - \phi(z_n)}{\sqrt{n}} 
+ 
\frac{\phi(z_n) - n\ell_d(\mu)}{\sqrt{n}},
\]
then the first term is non-negative and converges to zero in the $L^1$-norm
if and only if
\begin{equation}
\label{neglect}
\lim_n \frac{1}{\sqrt{n}} \Big( \int_G d(g,e) \, d\mu^{*n}(g) - n \ell_d(\mu) \Big) = 0.
\end{equation}
Hence, under condition \ref{neglect}, the question whether \ref{clt} holds is
completely reduced to the question whether 
\begin{equation}
\label{cltqh}
\frac{\phi(z_n) - n\ell_d(\mu)}{\sqrt{n}} \Rightarrow N(0,\sigma^2)
\end{equation}
for some positive constant $\sigma$. \\

Unfortunately, there is no reason in general to expect that $\phi$ is also 
right $\mu$-quasiharmonic so that Proposition \ref{fund} can be directly applied. We approach this serious problem as follows. Let $(B,m)$ be the
Poisson boundary of $(G,\mu)$ and note that for every $u \in L^\infty(B)$,
the function 
\[
\phi_u(g) = \phi(g) + \int_G u(g^{-1}b) \, dm(b), \quad g \in G,
\]
is again left Lipschitz and left $\mu$-quasiharmonic. Furthermore, \eqref{cltqh} holds for $\phi_u$ if and only if it holds for $\phi$. 
Hence it makes sense to ask whether we can find $u \in L^\infty(B,m)$ 
such that $\phi_u$ is right $\mu$-quasiharmonic. It turns out that there
is a simple criterion for this. Indeed, since $\phi$ is left Lipschitz, one 
can readily check that the function
\[
\widehat{\psi}(s) = \int_G \big( \phi(sg) - \phi(s) \big) \, d\mu(g),
\]
is bounded and left $\mu$-harmonic, and thus it corresponds via 
the Poisson transform (discussed in the first section of this paper)
to an element $\psi$ in $L^\infty(B)$ (which we shall refer to as the
\emph{right $\mu$-obstruction}), with the property that
\[
\int_B \psi(b) \, dm(b) = \ell_d(\mu).
\]
We observe that $\phi_u$ is right $\mu$-quasiharmonic if and only if $u$ satisfies the "cohomological equation"
\begin{equation}
\label{cohomo}
u(b) - \int_B u(s^{-1}b) \, d\mu(s) = \psi(b) - \ell_d(\mu), \quad 
\textrm{a.e. $[m]$.}
\end{equation}
For many triples $(G,\mu,d)$ of interest, such as Gromov hyperbolic groups equipped with symmetric probability measures with finite exponential moments, one can show that $\psi -\ell_d(\mu)$ must belong to a certain subspace $\cB \subset L^\infty(B,m)$ consisting of "smooth" functions with
zero $m$-integrals, which admits a seminorm $N_o$ with the property that
\[
N(u) = \|u\|_\infty + N_o(u)
\]
is a norm on $\cB$ and there exist $0 < \tau < 1$ and an integer 
$n_o$ such that the convolution operator
\[
Q_\mu u(b) = \int_G u(g^{-1} b) \, d\mu(g)
\]
satisfies the contraction bound
\begin{equation}
\label{boundN}
N_o(Q_\mu^{n_o}u) \leq \tau \cdot N_o(u)  \quad \forall \, u \in \cB.
\end{equation}
Note that once such a bound has been established, it is not hard to show that
the von Neumann series
\[
u = \sum_{n \geq 0} Q_{\mu}^{*n}\big(\psi-\ell_d(\mu)\big)
\]
is a well-defined element in $\cB$ which solves the equation \ref{cohomo}. The
main aim of the rest of this section will be to single out a class of symmetric measured 
groups which comes equipped with a "natural" weakly dense semi-normed subspace of $L^\infty(B,m)$, which one should think of as "measurably H\"older continuous" functions, on which $Q_\mu$ satisfies the above contraction bound.

\subsection{Besov spaces defined by product currents}
Let $(G,\mu)$ be a countable symmetric measured group and suppose 
$(X,\nu)$ is a compact $(G,\mu)$-space, that is to say, $X$ is a compact metrizable space equipped with an action of $G$ by homeomorphisms 
such that $\nu$ satisfies the equation
\[
\int_G \int_X \phi(s^{-1}x) \, d\nu(s) \, d\mu(s) = \int_X \phi(x) \, d\nu(x)
\]
for all $\phi \in C(X)$. If $\nu$ is non-atomic, then we can think of 
the product measure $\nu \otimes \nu$ as a probability measure on
the (in general) \emph{non-compact} space $\partial^2 X = X \times X \setminus \Delta X$, where $\Delta X$ denotes the (closed) diagonal subspace in $X \times X$. A non-negative Borel measurable function 
$\rho$ on $\partial^2 X$ is called a \emph{product current} if the 
(possibly infinite) Borel measure $\eta$ on $\partial^2 X$ defined by
\[
\int_{\partial^2 X} \phi(x,y) \, d\eta(x,y)
= 
\int_{\partial^2 X} \phi(x,y) \, \rho(x,y) \, d\nu(x) \, d\nu(y)
\]
is invariant with respect to the diagonal action of $G$ on 
$\partial^2 X \subset X \times X$. One can readily check
that this condition simply translates to the validity of the 
equation
\begin{equation}
\label{equivariance}
\rho(gx,gy) \, \sigma_{\nu}(g,x) \, \sigma_{\nu}(g,y) = \rho(x,y)
\end{equation}
for all $g$ in $G$ and for almost every $(x,y)$ with respect to 
the product measure $\nu \otimes \nu$. 

We stress that not 
every $(G,\mu)$-space admits a product current. However, 
certain classes of countable groups, such as Gromov hyperbolic 
groups and lattices in higher rank Lie groups, carry symmetric
probability measures with the property that their Poisson boundaries
(in some compact model) admit "natural" and "geometrically defined" 
product currents. We refer the reader to Section 5 of the paper \cite{BHM} 
for a detailed discussion about product currents 
for Gromov hyperbolic measured groups. In this case, $X$ is the Gromov
boundary of the hyperbolic group $G$, equipped with a certain distance function $d_o$, and $\rho$ is roughly proportional to $d_o(x,y)^{-D}$, 
where $D$ is a constant related to the Hausdorff dimension of $X$.

Given a product current $\rho$ for a $(G,\mu)$-space $(X,\nu)$ and given 
$\eps > 0$, we define a semi-norm $N_{\rho,\eps}$ on a subspace 
$\cB_{\rho,\eps} \subset L^\infty(X,\nu)$ by
\[
N_{\rho,\eps}(u) = 
\int_{\partial^2 X} \big|u(x) - u(y)\big| \, 
\rho(x,y)^{\frac{1}{2} + \eps} \, d\nu(x) \, d\nu(y),
\]
where $\cB_{\rho,\eps}$ consists of those elements in $L^\infty(X,\nu)$
with finite $N_{\rho,\eps}$-seminorms. We shall refer to linear space $(\cB_{\rho,\eps},N_{\rho,\eps})$ as the \emph{Besov space associated to $\rho$ of order 
$\eps$}, and since $\rho$ usually blows up close to the diagonal, we may think of $\cB_{\rho,\eps}$ as 
a "measurable" replacement of H\"older continous functions on $X$. As 
the following proposition will show, there is a simple criterion for the 
validity of the contraction bound \ref{boundN} for $Q_\mu$ acting on the space $\cB_{\rho,\eps}$.

\begin{proposition}
Let $(G,\mu)$ be a countable measured group and suppose $(X,\nu)$ is a
$(G,\mu)$-space which admits a product current $\rho$. Given $\eps > 0$
and an integer $n$, we define
\[
\tau_{\eps,n} = \esssup \int_G \sigma_\nu(g,\cdot)^{1-2\eps} \, d\mu^{*n}(g).
\]
Then $N_{\rho,\eps}(Q_\mu^{n} u) \leq \tau_{\eps,n} \cdot N_{\rho,\eps}(u)$ for all $u \in \cB_{\rho,\eps}$.
\end{proposition}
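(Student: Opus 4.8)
The plan is to estimate $N_{\rho,\eps}(Q_\mu^n u)$ directly by writing out the double integral, inserting the definition of $Q_\mu^n$, and exploiting the product‑current identity \eqref{equivariance} to absorb the Radon–Nikodym factors. First I would write
\[
Q_\mu^n u(x) - Q_\mu^n u(y) = \int_G \big( u(g^{-1}x) - u(g^{-1}y) \big) \, d\mu^{*n}(g),
\]
so that, by the triangle inequality and Tonelli's theorem,
\[
N_{\rho,\eps}(Q_\mu^n u) \leq \int_G \int_{\partial^2 X} \big| u(g^{-1}x) - u(g^{-1}y) \big| \, \rho(x,y)^{\frac12+\eps} \, d\nu(x)\,d\nu(y)\,d\mu^{*n}(g).
\]
The key step is then to perform, for each fixed $g$, the change of variables $x \mapsto gx$, $y \mapsto gy$ in the inner double integral. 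Since $\nu$ is only quasi‑invariant, this introduces the Jacobian factor $\sigma_\nu(g,x)\,\sigma_\nu(g,y)$, turning the inner integral into
\[
\int_{\partial^2 X} \big| u(x) - u(y) \big| \, \rho(gx,gy)^{\frac12+\eps}\, \sigma_\nu(g,x)\,\sigma_\nu(g,y) \, d\nu(x)\,d\nu(y).
\]
Now I apply the product‑current equation \eqref{equivariance} in the form $\rho(gx,gy) = \rho(x,y)\,\sigma_\nu(g,x)^{-1}\sigma_\nu(g,y)^{-1}$, so that $\rho(gx,gy)^{\frac12+\eps}\,\sigma_\nu(g,x)\,\sigma_\nu(g,y)$ collapses to $\rho(x,y)^{\frac12+\eps}\,\sigma_\nu(g,x)^{\frac12-\eps}\,\sigma_\nu(g,y)^{\frac12-\eps} = \rho(x,y)^{\frac12+\eps}\,\sigma_\nu(g,x)^{1-2\eps}\,\sigma_\nu(g,y)^{1-2\eps}$.

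Substituting this back and swapping the order of integration so that the $g$‑integral is innermost, I obtain
\[
N_{\rho,\eps}(Q_\mu^n u) \leq \int_{\partial^2 X} \big| u(x) - u(y) \big| \, \rho(x,y)^{\frac12+\eps} \left( \int_G \sigma_\nu(g,x)^{1-2\eps}\,\sigma_\nu(g,y)^{1-2\eps} \, d\mu^{*n}(g) \right) d\nu(x)\,d\nu(y).
\]
It remains to bound the inner $g$‑integral. Here one uses the Cauchy–Schwarz inequality in $g$ (with respect to $\mu^{*n}$) to split the product $\sigma_\nu(g,x)^{1-2\eps}\sigma_\nu(g,y)^{1-2\eps}$, giving
\[
\int_G \sigma_\nu(g,x)^{1-2\eps}\,\sigma_\nu(g,y)^{1-2\eps}\,d\mu^{*n}(g) \leq \left( \int_G \sigma_\nu(g,x)^{2(1-2\eps)}\,d\mu^{*n}(g) \right)^{1/2}\!\!\left( \int_G \sigma_\nu(g,y)^{2(1-2\eps)}\,d\mu^{*n}(g) \right)^{1/2},
\]
and each factor is at most $\tau_{\eps,n}$ by its definition as an essential supremum (with the exponent $1-2\eps$ — so in fact Cauchy–Schwarz is not even needed if one defines $\tau_{\eps,n}$ exactly matching the product, but the estimate closes either way). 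Pulling $\tau_{\eps,n}$ out of the double integral leaves exactly $\tau_{\eps,n} \cdot N_{\rho,\eps}(u)$, which is the claimed bound.

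I expect the only genuinely delicate point to be the rigorous justification of the change of variables for the \emph{non‑compact} space $\partial^2 X$ together with the a.e.-valid cocycle identities: one must check that \eqref{equivariance} and the substitution formula hold simultaneously for a.e.\ $(x,y)$ and for $\mu^{*n}$‑a.e.\ $g$, and invoke Tonelli to interchange the $g$‑integral with the $(x,y)$‑integral throughout (all integrands are non‑negative, so this is legitimate). The multiplicativity of the cocycle \eqref{multcocycle} guarantees that $\sigma_\nu(\cdot,\cdot)$ with respect to $\mu^{*n}$ is consistent, and the measurability of $\rho$ makes all the integrals well‑defined in $[0,+\infty]$. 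Everything else is bookkeeping.
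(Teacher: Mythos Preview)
Your overall strategy is exactly the paper's: triangle inequality, change of variables $x\mapsto gx$, $y\mapsto gy$, apply the product-current identity \eqref{equivariance}, then split the resulting product of cocycle powers via Cauchy--Schwarz (the paper says ``H\"older's inequality twice''). However, you have an arithmetic slip that breaks the final bound.

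After the substitution and the use of \eqref{equivariance}, the exponent on each $\sigma_\nu$ factor is $\tfrac12-\eps$, not $1-2\eps$: from
\[
\rho(gx,gy)^{\frac12+\eps}\,\sigma_\nu(g,x)\,\sigma_\nu(g,y)
=\rho(x,y)^{\frac12+\eps}\,\sigma_\nu(g,x)^{1-(\frac12+\eps)}\,\sigma_\nu(g,y)^{1-(\frac12+\eps)}
=\rho(x,y)^{\frac12+\eps}\,\sigma_\nu(g,x)^{\frac12-\eps}\,\sigma_\nu(g,y)^{\frac12-\eps},
\]
and the equality you wrote, $\sigma_\nu(g,x)^{\frac12-\eps}\sigma_\nu(g,y)^{\frac12-\eps}=\sigma_\nu(g,x)^{1-2\eps}\sigma_\nu(g,y)^{1-2\eps}$, is simply false. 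Consequently your Cauchy--Schwarz step produces the exponent $2(1-2\eps)$, which does \emph{not} match the definition of $\tau_{\eps,n}$, and the argument does not close as written.

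With the correct exponent $\tfrac12-\eps$ in place, Cauchy--Schwarz in the $g$-variable gives
\[
\int_G \sigma_\nu(g,x)^{\frac12-\eps}\sigma_\nu(g,y)^{\frac12-\eps}\,d\mu^{*n}(g)
\le \Big(\int_G \sigma_\nu(g,x)^{1-2\eps}\,d\mu^{*n}(g)\Big)^{1/2}\Big(\int_G \sigma_\nu(g,y)^{1-2\eps}\,d\mu^{*n}(g)\Big)^{1/2}
\le \tau_{\eps,n},
\]
which is exactly what is needed. So Cauchy--Schwarz is \emph{genuinely} required here---it is precisely what converts the exponent $\tfrac12-\eps$ into the $1-2\eps$ appearing in $\tau_{\eps,n}$---contrary to your parenthetical remark. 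Once this is fixed, your proof is correct and identical to the paper's.
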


\begin{proof}
First recall that
\[
\rho(sx,sy) \, \sigma_\nu(s,x) \, \sigma_\nu(s,y) = \rho(x,y)
\]
for almost every $(x,y)$ with respect to $\nu \otimes \nu$. Hence, 
we have
\begin{eqnarray*}
N_{\rho,\eps}(Q_{\mu}^{*n}u)
&\leq &
\int_G \int_{\partial^2 X}
\big|u(s^{-1}x) - u(s^{-1}y)\big| \, \rho(x,y)^{\frac{1}{2}+\eps} \, 
d\nu(x) \, d\nu(y) \, d\mu^{*n}(s) \\
&= &
\int_G \int_{\partial^2 X}
\big|u(x) - u(y)\big| \, \rho(sx,sy)^{\frac{1}{2}+\eps} \, \sigma_\nu(s,x) \, \sigma_\nu(s,y) \, 
d\nu(x) \, d\nu(y) \, d\mu^{*n}(s) \\
&= &
\int_G \int_{\partial^2 X}
\big|u(x) - u(y)\big| \, \rho(x,y)^{\frac{1}{2}+\eps} \, 
\sigma_\nu(s,x)^{\frac{1}{2}-\eps} \, \sigma_\nu(s,y)^{\frac{1}{2}-\eps} \, 
d\nu(x) \, d\nu(y) \, d\mu^{*n}(s) \\
&\leq &
\Big( \esssup \int_G \sigma_\nu(s,\cdot)^{\frac{1}{2}-2\eps} \, d\mu^{*n}(s) \Big) \cdot N_{\rho,\eps}(u),
\end{eqnarray*}
where we in the last line used H\"older's inequality twice.
\end{proof}

\subsection{Minimality and non-invariance force contraction}
The aim of the final subsection of this section will be to isolate natural 
conditions on a compact $(G,\mu)$-space $(X,\nu)$ which will force the existence
of an integer $n$, for every given $\eps > 0$, such that
\begin{equation}
\label{cocyclebnd}
\tau_{n,\eps} = \esssup \int_G \sigma_{\nu}(s,\cdot)^{1-2\eps} \, d\mu^{*n}(s) < 1.
\end{equation}
We shall henceforth assume that the functions $x \mapsto \sigma_\nu(s,x)$
are continuous for every $s$ in $G$. Although this assumption is not absolutely necessary, it will simplify many of the arguments below. Furthermore, we may without loss of generality assume that the identity 
belongs to the support of $\mu$. Indeed, if not, then we can replace $\mu$
with the probability measure 
\[
\mu_o = \frac{1}{2}\delta_e + \frac{1}{2} \mu,
\]
with respect to which $\nu$ is still stationary, and \eqref{cocyclebnd} holds
for $\mu$ if and only if it holds for $\mu_o$. Note that the supports of 
$\mu_o^{*n}$ forms an \emph{increasing} family of sets in $G$ which 
asymptotically exhausts $G$.\\

First recall that by by \eqref{cocycleharm} (which holds for every $(G,\mu)$-space), we have
\[
\int_G \sigma_\nu(s,x) \, d\mu^{*n}(s) = 1
\]
for all $n$ and for almost every $x$ in $X$,  In particular, $\tau_{n,\eps}$ is
always bounded by one for all $n$ and $\eps$, and \eqref{cocyclebnd} fails 
if and only if for every $n$, there exists $x_n \in X$ such that 
\[
\int_G \sigma_\nu(s,x_n)^{1-2\eps} \, d\mu^{*n}(s) = 1.
\]
In other words, for every $n$, we have equality in H\"older's inequality (when 
integrating against $\mu^{*n}$), which clearly forces the identities
\[
\sigma_\nu(s,x_n) = 1 \quad \forall \, s \in \supp \mu^{*n}
\]
for all $n$. Let $x_\infty$ be an accumulation point of the sequence $(x_n)$
in $X$. Since $\sigma_\nu(s,\cdot)$ is continuous for every $s$ and the supports of $\mu^{*n}$ is an increasing exhausting family of sets in $G$, we conclude that 
\[
\sigma_\nu(s,x_\infty) = 1, \quad \forall \, s \in G.
\]
Let us now further assume that the $G$-action on $X$ is \emph{minimal}, i.e. 
every $G$-orbit is dense. Then, by the cocycle equation \eqref{multcocycle}, which holds for every $(G,\mu)$-space, we have 
$\sigma_\nu(s,tx_\infty) = 1$ for all $s,t$ in $G$, and since $Gx_\infty$
is dense and $\sigma_\nu(s,\cdot)$ is continuous, we conclude that 
$\sigma_\nu(s,x) = 1$ for all $s$ in $G$ and $x$ in $X$, or equivalently, 
$\nu$ is $G$-invariant. We summarize the above discussion in the following 
proposition.

\begin{proposition}
\label{cont}
Let $(G,\mu)$ be a countable measured group and suppose $(X,\nu)$ is 
a compact minimal $(G,\mu)$-space such that $\sigma_\nu(s,\cdot)$ is continuous for every $s$ in $G$. If $\nu$ is not $G$-invariant, then for
every $\eps > 0$, there exists an integer $n$ such that
\[
\sup \int_G \sigma_\nu(s,\cdot)^{1-2\eps} \, d\mu^{*n}(s) < 1.
\]
\end{proposition}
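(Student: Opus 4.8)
The plan is to argue by contradiction, making rigorous the heuristic already outlined above the statement. Two harmless normalisations come first. Since replacing $\mu$ by $\mu_o=\tfrac12\delta_e+\tfrac12\mu$ keeps $\nu$ stationary, makes the supports $\supp\mu_o^{*n}$ an \emph{increasing} family exhausting $G$, and (because $\sigma_\nu(e,\cdot)\equiv1$) does not affect whether the asserted bound can hold for some $n$, I may assume $e\in\supp\mu$. Also, the statement is only meaningful for $0<\eps<\tfrac12$: for $\eps\ge\tfrac12$ the exponent $1-2\eps$ is non-positive, $t\mapsto t^{1-2\eps}$ is convex, and \eqref{cocycleharm} together with Jensen's inequality forces $\int_G\sigma_\nu(s,\cdot)^{1-2\eps}\,d\mu^{*n}(s)\ge1$ for every $n$, so the conclusion should be read in the range $\eps\in(0,\tfrac12)$. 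In that range $t\mapsto t^{1-2\eps}$ is strictly concave, and \eqref{cocycleharm} (valid for every $(G,\mu)$-space) says $\int_G\sigma_\nu(s,x)\,d\mu^{*n}(s)=1$ for $\nu$-a.e.\ $x$; since minimality forces $\supp\nu=X$ and, by a generalised dominated convergence argument using the continuity of each $\sigma_\nu(s,\cdot)$, the left-hand side is continuous in $x$, this holds for \emph{every} $x\in X$. Applying Jensen's inequality to the probability measure $\mu^{*n}$ then yields $\tau_{n,\eps}\le1$ for all $n$.

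Now assume, for contradiction, that $\tau_{n,\eps}=1$ for every $n$. The same generalised dominated convergence argument shows that $x\mapsto\int_G\sigma_\nu(s,x)^{1-2\eps}\,d\mu^{*n}(s)$ is continuous on the compact space $X$ (the integrands are dominated by $\sigma_\nu(s,x)+1$, whose $\mu^{*n}$-integral is the constant $2$), so the essential supremum defining $\tau_{n,\eps}$ is attained at some $x_n\in X$, where $\int_G\sigma_\nu(s,x_n)^{1-2\eps}\,d\mu^{*n}(s)=1$. Equality in Jensen's inequality for the strictly concave function $t\mapsto t^{1-2\eps}$ forces $s\mapsto\sigma_\nu(s,x_n)$ to be $\mu^{*n}$-a.e.\ constant, and the normalisation $\int_G\sigma_\nu(s,x_n)\,d\mu^{*n}(s)=1$ identifies that constant as $1$. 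Because $G$ is countable and discrete, a $\mu^{*n}$-null set is exactly the complement of $\supp\mu^{*n}$, so this says precisely that $\sigma_\nu(s,x_n)=1$ for every $s\in\supp\mu^{*n}$.

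Next I would pass to an accumulation point $x_\infty\in X$ of the sequence $(x_n)$, available by compactness of $X$. Fix $s\in G$. Since the sets $\supp\mu^{*n}$ increase to $G$, we have $s\in\supp\mu^{*n}$ for all large $n$, so $\sigma_\nu(s,x_n)=1$ along a subsequence converging to $x_\infty$, and continuity of $\sigma_\nu(s,\cdot)$ gives $\sigma_\nu(s,x_\infty)=1$. As $s$ was arbitrary, $\sigma_\nu(s,x_\infty)=1$ for all $s\in G$. Now minimality enters: by the cocycle identity \eqref{multcocycle}, $\sigma_\nu(s,tx_\infty)=\sigma_\nu(st,x_\infty)\,\sigma_\nu(t,x_\infty)^{-1}=1$ for all $s,t\in G$, and since the orbit $Gx_\infty$ is dense in $X$ while each $\sigma_\nu(s,\cdot)$ is continuous, it follows that $\sigma_\nu(s,\cdot)\equiv1$ on $X$ for every $s$, i.e.\ $g^{-1}\nu=\nu$ for all $g\in G$. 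This contradicts the hypothesis that $\nu$ is not $G$-invariant, which completes the argument.

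The soft parts of this plan — the reduction to $e\in\supp\mu$, the compactness extraction, the density-of-orbit argument — are routine. The step that carries the real content is the passage from $\tau_{n,\eps}=1$ to pointwise triviality of the cocycle along $\supp\mu^{*n}$: it is exactly here that strict concavity of $t\mapsto t^{1-2\eps}$, hence the restriction $\eps<\tfrac12$, is indispensable, and one must be careful that "equality in Jensen" is invoked against the right probability measure ($\mu^{*n}$, in the $s$-variable) and that the resulting a.e.\ statement upgrades to an honest pointwise statement on $\supp\mu^{*n}$ by discreteness of $G$. I also expect the minor continuity lemma for $x\mapsto\int_G\sigma_\nu(s,x)^{1-2\eps}\,d\mu^{*n}(s)$ — which licenses the existence of the maximiser $x_n$ — to be the technical point most easily overlooked.
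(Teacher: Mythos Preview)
Your proposal is correct and follows essentially the same contradiction argument as the paper: normalise so that $e\in\supp\mu$, use the concavity/Hölder equality case at a maximiser $x_n$ to force $\sigma_\nu(s,x_n)=1$ on $\supp\mu^{*n}$, pass to an accumulation point $x_\infty$, and then use the cocycle identity together with minimality and continuity to conclude $\sigma_\nu\equiv 1$. The only cosmetic difference is that you phrase the key step as equality in Jensen's inequality for the strictly concave map $t\mapsto t^{1-2\eps}$, whereas the paper phrases it as equality in H\"older's inequality---these are equivalent here---and you are more explicit than the paper about the range $0<\eps<\tfrac12$ and about the continuity needed to produce the maximiser $x_n$.
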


\begin{remark}
The assumptions in the last proposition are satisfied for every symmetric
probability measure $\mu$ with finite exponential moments (with respect 
to the any word metric) on any non-elementary Gromov hyperbolic group, where $(X,\nu)$ denotes its Gromov boundary and $\nu$ is the unique 
$\mu$-stationary measure on $X$. Hence Proposition \ref{cont} gives a 
new proof of the main technical estimate in the author's paper \cite{Bj08}.

Although Proposition \ref{cont} assumes a lot about the topological 
and dynamical structure of $(X,\nu)$, there is no assumption about 
the moments of $\mu$. In particular, Proposition \ref{cont}, as well as 
the discussions about product currents proceeding it, also apply to the 
Furstenberg boundary action of a lattice $G$ in a simple Lie group $H$, 
at least when the $\mu$-stationary measure $\nu$ on the Furstenberg boundary $H/P$ (here $P$ is a minimal parabolic subgroup of $H$) 
belong to the Haar measure class. Such probability measures on the lattice always exist (see e.g. \cite{Fu}), but they tend to have very heavy tails. Since
the Furstenberg boundary of a simple Lie group, equipped with the Haar 
measure, always admits a product current (upon identifying a conull subset 
of $H/P \times H/P$ with $H/A$, where $A$ is the (unimodular) split torus of $G$), Proposition \ref{cont} in particular
implies that every function in the associated Besov space $\cB_{\rho,\eps}$ 
is in fact of the form $\phi - \mu * \phi$ for some 
$\phi \in \cB_{\rho,\eps}$.
\end{remark}
\section{Product sets in groups}

This final section is concerned with the structure of 
difference sets in free groups, and the aim here is to give a short and
rather elementary proof of a weaker version of a recent theorem by 
the author and A. Fish (Theorem 1.1 in \cite{BF3}). We begin by providing some background and motivation. \\

A significant part of additive combinatorics is concerned with special 
instances of the following phenomenology: If $G$ is a countable group and 
$A, B \subset G$ are "large" subsets, then the product set $AB$ should
exhibit "substantial sub-structures". The exact meanings of these 
notions varies a lot depending on the context, 
and in this section we shall only be concerned with (partially) extending the 
following result by Khintchine \cite{Kh35} and F\o lner \cite{Fo}, which was
one of the first observations of this phenomenology (at least in the setting 
of discrete groups).

\begin{theorem}
\label{FO}
Suppose $A_1, \ldots, A_k \subset \bZ$ are subsets which are "large" in
the sense that
\[
\varlimsup_{n \ra \infty} \frac{|A_i \cap [-n,n]|}{2n+1} > 0, \quad \forall \, i=1,\ldots,k. 
\]
Then their difference sets contain "substantial sub-structures", in the sense
that there exists a finite set $F \subset \bZ$ such that
\[
F + \bigcap_{i=1}^k (A_i-A_i) = \bZ.
\]
\end{theorem}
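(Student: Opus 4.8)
The plan is to deduce Theorem \ref{FO} from a suitable ergodic-theoretic correspondence principle, together with a recurrence statement for measure-preserving $\bZ$-actions. First I would invoke the Furstenberg correspondence principle: given $A_1, \ldots, A_k \subset \bZ$ with positive upper Banach density (the hypothesis $\varlimsup |A_i \cap [-n,n]|/(2n+1) > 0$ can be upgraded to positive upper \emph{Banach} density, or one works directly with the relevant Følner sequence), there exist a probability measure preserving system $(\Omega, \cP, T)$ and measurable sets $D_1, \ldots, D_k \subset \Omega$ with $\cP(D_i) > 0$ for each $i$, such that for every finite set of integers $n_1, \ldots, n_r$ one has
\[
\varlimsup_{N \to \infty} \frac{|(A_i - n_1) \cap \cdots \cap (A_i - n_r) \cap [-N,N]|}{2N+1} \geq \cP(T^{-n_1} D_i \cap \cdots \cap T^{-n_r} D_i).
\]
In particular, $n \in A_i - A_i$ whenever $\cP(D_i \cap T^{-n} D_i) > 0$, so the set $R_i = \{ n : \cP(D_i \cap T^{-n} D_i) > 0 \}$ is contained in $A_i - A_i$, and it suffices to show that $\bigcap_{i=1}^k R_i$ is \emph{syndetic} in $\bZ$, i.e. that finitely many translates of it cover $\bZ$.

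Second, I would prove that each $R_i$ — indeed each such ``return-time set'' — is syndetic. This is where the structure theory enters: the set of $n$ with $\cP(D \cap T^{-n}D) > 0$ is syndetic for any positive-measure $D$, because the orbit closure of $\chi_D$ in $L^2(\Omega,\cP)$ under the Koopman operator is compact in the weak topology on the unit ball, or more elementarily because one can pass to the Kronecker (maximal discrete-spectrum) factor and use equidistribution on the resulting compact abelian rotation: on that factor $D$ projects to a set of positive Haar measure, and $\{n : \lambda(E \cap (E-n)) > 0\}$ is syndetic for a rotation on a compact group by a straightforward pigeonhole/compactness argument (the set of return times of a positive-measure set under a minimal rotation is even Bohr-dense). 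Alternatively, and perhaps most in the spirit of the present paper, one can observe that weak mixing is irrelevant here and that the relevant phenomenon is exactly the presence of the discrete-spectrum factor; the machinery of Section 3 (Mackey's theorem identifying discrete-spectrum $\bZ$-systems with compact group rotations) gives the syndeticity cleanly.

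Third, I would handle the \emph{simultaneous} intersection $\bigcap_{i=1}^k R_i$. The cheap route is to take the product system $(\Omega^k, \cP^{\otimes k}, T \times \cdots \times T)$ and the set $D = D_1 \times \cdots \times D_k$, which has positive measure; then $\bigcap_i R_i \supseteq \{ n : \cP^{\otimes k}(D \cap (T\times\cdots\times T)^{-n} D) > 0\}$, and the single-system argument applied to this product system shows the latter is syndetic. Hence $\bigcap_i (A_i - A_i) \supseteq \bigcap_i R_i$ is syndetic, which means precisely that there is a finite set $F$ with $F + \bigcap_{i=1}^k (A_i - A_i) = \bZ$.

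The main obstacle I anticipate is purely bookkeeping in the correspondence principle: one must be careful that upper density (with respect to the symmetric intervals $[-n,n]$) is enough to run Furstenberg's construction, which natively uses upper Banach density; this is resolved by first replacing the $A_i$ by translates along a subsequence realizing the $\varlimsup$, or by working with a weak-$*$ limit of the empirical measures $\frac{1}{2n+1}\sum_{j=-n}^{n} \delta_{\chi_{A_i}(\cdot + j)}$ on $\{0,1\}^{\bZ}$ directly. Once the system and sets $D_i$ are in hand, the syndeticity of return-time sets is genuinely soft — it is the only place ``discrete spectrum'' is used, and it is exactly parallel to the classical Poincaré-recurrence-plus-pigeonhole argument — so no serious difficulty remains there.
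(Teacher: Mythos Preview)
Your argument is correct and is essentially the standard ergodic-theoretic proof of the Khintchine--F\o lner theorem. However, the paper does not actually prove Theorem~\ref{FO}: it is stated there, with attribution to Khintchine \cite{Kh35} and F\o lner \cite{Fo}, purely as background and motivation for the non-abelian analogue (Theorem~\ref{BF}) that the paper goes on to establish for $\bF_2$. So there is no proof in the paper to compare against.

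That said, it is worth contrasting your approach with the method the paper uses for the related Theorem~\ref{BF}. You run the Furstenberg correspondence principle to obtain a measure-preserving $\bZ$-system and then invoke the syndeticity of return-time sets (Khintchine recurrence) in a product system; the discrete-spectrum/Kronecker factor is the structural ingredient. The paper, for its free-group result, works entirely with \emph{$\mu$-harmonic means} on the group: one produces a left $\mu$-harmonic mean giving positive mass to $A$, upgrades this (via a lemma quoted from \cite{BF3}) to a harmonic mean giving $FA$ mass close to $1$, and then applies a zero--one law for positively correlated bounded harmonic functions (Lemma~\ref{measone}) to conclude that $FAA^{-1}$ has full mass under some harmonic mean, hence is right thick. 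Your route yields the sharper conclusion (syndetic rather than merely thick) and handles several sets $A_1,\ldots,A_k$ simultaneously, but it relies essentially on the amenability of $\bZ$: the correspondence produces a \emph{measure-preserving} system, and Khintchine recurrence lives there. The paper's harmonic-mean machinery is designed precisely to survive the passage to non-amenable groups such as $\bF_2$, where no invariant mean exists and one must settle for $\mu$-stationary objects and the weaker ``thick'' conclusion.

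One small remark: your worry about upper density versus upper Banach density is unnecessary --- positive upper density along the intervals $[-n,n]$ trivially implies positive upper Banach density, so Furstenberg's construction applies without modification.
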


The additive group of integers is of course nothing but the free group on one generator. A first naive attempt to extend Theorem \ref{FO} to free group on two or more generators could be devised along the following lines. Let $\bF_2$ denote the free group on two (free) generators $a$ and $b$ and let $B_n$ denote the ball of radius $n$ with respect to these generators, that is to say, $B_n$ consists of all the words in $a$ and $b$ and their inverses whose reduced form have length at most $n$. In analogy with Theorem \ref{FO} (where the "balls" with respect to the one free generator $1$ are simply given by the interval $[-n,n]$), we say that a set 
$A \subset \bF_2$ is \emph{upper large} if 
\[
\varlimsup_{n \ra \infty} \frac{|A \cap B_n|}{|B_n|} > 0.
\]
However, we warn the reader that upper large sets could be \emph{very} sparse
in $\bF_2$; for instance, given any increasing sequence $(r_i)$ of positive
integers, the set
\begin{equation}
\label{sparse}
A = \bigcup_{i=1}^\infty \big(B_{r_{i}+1} \setminus B_{r_i}\big) \subset \bF_2
\end{equation}
satisfies 
\[
\varlimsup_{n \ra \infty} \frac{|A \cap B_n|}{|B_n|} \geq \frac{2}{3}.
\]
We do not expect to say anything intelligent about differences of sets like 
these, so we slightly modify our notion of largeness to exclude too sparse
examples. Define the \emph{sphere} $S_n$ of radius $n$ by $S_n = B_n \setminus B_{n-1}$ and say that a set $A \subset G$ is \emph{large} if 
\[
\varlimsup_{m \ra \infty} \frac{1}{m} \sum_{n=1}^m \frac{|A \cap S_n|}{|S_n|} > 0.
\]
We see that for a set as in \eqref{sparse} to be large in this sense, serious 
growth constraints on the sequence $(r_i)$ have to be imposed, so the notion
of largeness is \emph{strictly} weaker than upper largeness. 

One can now ask whether something like Theorem \ref{FO} could be true for 
large sets. However, already simple considerations show that great care has to
be taken to even formulate the right statement. Indeed, it is not hard to 
construct (and we refer to \cite{BF3} for details) large 
subsets $A_1,A_2,A_3 \subset \bF_2$ such that
\[
A_1 A_1^{-1} \cap A_2 A_2^{-1} \cap A_3 A_3^{-1} = \{0\} 
\]
and for which there is no finite subset $F \subset G$ such that $FA_iA_i^{-1} = \bF_2$ 
for \emph{some} $i = 1,2,3$. However, the situation is not completely hopeless if one is willing to slightly weaken the notion of "substantial sub-structure"
as the following recent observation (see Corollary 1.2 in \cite{BF3}) by the author and A. Fish shows. 

\begin{theorem}[Bj\"orklund-Fish, weak version]
\label{BF}
Suppose $A \subset \bF_2$ is "large" in the sense that
\[
\varlimsup_{m \ra \infty} \frac{1}{m} \sum_{n=1}^m \frac{|A \cap S_n|}{|S_n|} > 0.
\]
Then there exists a finite set $F \subset \bF_2$ such that 
$FAA^{-1}$ contains a right translate of every finite subset 
of $G$. 
\end{theorem}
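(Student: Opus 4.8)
The statement has a strongly "measured" flavour once one recalls that for the free group $\bF_2$ with the uniform generating measure $\mu = \frac{1}{4}(\delta_a + \delta_b + \delta_{a^{-1}} + \delta_{b^{-1}})$, the quantity $\frac{1}{m}\sum_{n=1}^m |A \cap S_n|/|S_n|$ is, up to a harmless rescaling coming from $|S_n| \approx \tfrac{3}{4}|B_n|$, comparable to a Cesàro average of $\mu^{*n}(A)$ against the density of $\mu^{*n}$ on spheres. The natural route is therefore: (i) translate the combinatorial largeness of $A$ into the positivity of some kind of stationary measure of $A$ on a boundary object, (ii) use a correspondence principle to produce a $(\bF_2,\mu)$-space $(X,\nu)$ together with a positive-measure set $\tilde A \subset X$ whose "return sets" to $\tilde A$ are modelled by $AA^{-1}$, and (iii) exploit the strong approximate transitivity (SAT) of the Poisson boundary — recorded in the remark after Proposition \ref{furstenberg} — to conclude.

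**Key steps in order.** First I would set up a Furstenberg-type correspondence principle: from the largeness hypothesis on $A$, extract (via a weak-$*$ limit of normalized counting measures along spheres, and a diagonal/compactness argument) a compact $\bF_2$-space $X$ carrying a $\mu$-stationary probability measure $\nu$ and a clopen-ish set $\tilde A \subseteq X$ with $\nu(\tilde A) > 0$, arranged so that $g \in AA^{-1}$ whenever $\tilde A \cap g\tilde A$ meets a fixed "generic" point, or more robustly so that $\{g : \nu(\tilde A \cap g\tilde A) > 0\} \subseteq AA^{-1}$ up to a bounded ambiguity (this bounded ambiguity is what will ultimately get absorbed into the finite set $F$). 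Second, I would pass to the Poisson boundary $(B,m)$ of $(\bF_2,\mu)$: since $(X,\nu)$ is a $(\bF_2,\mu)$-space, there is a $\bF_2$-map $B \to X$ pushing $m$ to $\nu$, so $\tilde A$ pulls back to a positive-$m$-measure set $\hat A \subseteq B$ whose "self-intersection return set" still lands inside $AA^{-1}$ modulo the same bounded fudge. Third — and this is the crux — I would invoke SAT: given any finite set $\{g_1,\dots,g_k\} \subset \bF_2$, I want a single $h \in \bF_2$ with $hg_i \in F\,AA^{-1}$ for all $i$; SAT says that for any $\eps>0$ there is $h$ with $m(h\hat A) > 1-\eps$, hence $m(h\hat A \cap g_i^{-1} h \hat A) > 0$ for a suitable further translate, forcing $g_i \in (\text{return set of }\hat A)$ and therefore $g_i \in F\,AA^{-1}$. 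One has to be slightly careful: $\bF_2$ is non-amenable, so there is no invariant mean, and it is precisely SAT (which does not require amenability) that replaces the Følner-set argument used in the classical Theorem \ref{FO}.

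**The main obstacle.** The delicate point is the correspondence principle in step one: the balls $B_n$ in $\bF_2$ are not Følner sets, so the classical Khintchine–Følner argument (which produces an invariant mean and then a Bohr-type set) simply breaks down, and one must instead feed the largeness condition into the *random-walk* structure. Concretely, the hard work is showing that the Cesàro-average largeness of $A$ along spheres forces $\overline{\lim}_m \frac{1}{m}\sum_{n=1}^m \mu^{*n}(A) > 0$ (or an analogous statement after symmetrizing $A$ into $AA^{-1}$), and then that this positivity survives the passage to a weak-$*$ limit as an honest $\mu$-stationary measure of a boundary set — here one uses that $\mu^{*n}$ equidistributes on spheres in a way compatible with the harmonic measure $m$ on $\partial \bF_2$. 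Once a stationary model with $\nu(\tilde A)>0$ is in hand, the SAT step is genuinely short. A secondary technical nuisance is keeping track of the uniformly bounded discrepancy between "$g$ lies in the return set of $\tilde A$" and "$g \in AA^{-1}$" introduced by the correspondence, and checking it can be covered by finitely many group elements $F$ independent of the finite set one is trying to translate into $FAA^{-1}$; this is where the freedom to enlarge $F$ by a fixed finite amount is used.
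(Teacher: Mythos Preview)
Your overall instinct---translate the largeness of $A$ into positivity for a $\mu$-harmonic object and then exploit some strong-transitivity phenomenon---is sound, but step two contains a genuine gap. You assert that because $(X,\nu)$ is a $(\bF_2,\mu)$-space there is a $\bF_2$-equivariant map $B \to X$ pushing $m$ to $\nu$. This is false in general: only $\mu$-\emph{boundaries} (equivalently, $(G,\mu)$-spaces on which the random walk converges almost surely to a point mass) arise as factors of the Poisson boundary. A correspondence-principle space such as the orbit closure of $\chi_A$ in $\{0,1\}^{\bF_2}$ equipped with a stationary measure will typically have nontrivial measure-preserving factors and is therefore \emph{not} a $\mu$-boundary. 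Without that map your SAT argument has nothing to act on, and the chain breaks. (Even granting the map, your sketch of how SAT yields right thickness of $FAA^{-1}$, rather than of something built from a single translate $h\hat A$, would need substantial tightening.)

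The paper avoids this issue by never constructing a $(G,\mu)$-space at all; it works entirely with left $\mu$-harmonic \emph{means} on $\ell^\infty(\bF_2)$. The sphere recursion $\sigma_n * \sigma_1 = \tfrac{3}{4}\sigma_{n+1} + \tfrac{1}{4}\sigma_{n-1}$ forces any weak-$*$ cluster point $\lambda$ of the Ces\`aro averages $\tfrac{1}{m}\sum_{n=1}^m \sigma_n$ to be $\mu$-harmonic, and the largeness hypothesis gives $\lambda'(A)>0$ directly---so your ``main obstacle'' (relating sphere densities to $\mu^{*n}(A)$) is handled in one line without ever comparing to convolution powers. A black-boxed lemma from the companion paper then produces a finite $F$ and a harmonic mean $\eta$ with $\eta'(FA) \geq 1 - \eps\,\eta'(A)$, after which $FAA^{-1}$ is seen to contain the positivity set $S_u = \{g : u(g) > 0\}$ of the bounded left-harmonic function $u(g) = \eta'(gA) - \eps\,\eta'(A)$. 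The substitute for SAT is a short ``zero--one'' lemma: for any bounded harmonic $u$ with $\|u\|_\infty = \sup_g u(g)$, a Chebyshev estimate combined with a cluster point of Ces\`aro-averaged right translates along a maximizing sequence yields a harmonic mean giving $S_u$ measure one, and any set of full harmonic-mean measure is automatically right thick. Thus the role you wanted SAT to play is split between the $\eta'(FA)\approx 1$ lemma and this zero--one lemma, both of which live on the group itself.
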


We stress that this is not the formulation of Corollary 1.2. in \cite{BF3}, so 
we first take a moment to rewrite Theorem \ref{BF} in a language which  
better align with the present paper (and with \cite{BF3}). Let $G = \bF_2$ and
define the probability measures $(\sigma_n)$ on $G$ (uniform sphere averages) by 
\[
\sigma_o = \delta_e \qand \sigma_n = \frac{1}{|S_n|} \sum_{s \in S_n} \delta_s, \quad \textrm{for $n \geq 1$}.
\]
It is not hard to verify the relations
\begin{equation}
\label{rec}
\sigma_n * \sigma_1 = \frac{3}{4} \cdot \sigma_{n+1} + 
\frac{1}{4} \cdot \sigma_{n-1}, \quad \forall \, n \geq 1,
\end{equation}
which in particular shows that every $\sigma_n$ can be written as
a convex combination of convolution powers of $\sigma_1$ and 
$\delta_e$.

Let $M(G)$ denote the convex set of all means on $G$, i.e. the set 
of all linear functionals on $\ell^\infty(G)$ which are positive and 
unital (i.e. $\lambda(1) = 1)$. We note that every mean $\lambda$ 
gives rise to a \emph{finitely additive} probability measure $\lambda'$
on $G$ via the formula
\[
\lambda'(C) = \lambda(\chi_C), \quad C \subset G,
\]
and by the Banach-Alaoglo's Theorem, the set $M(G)$ is compact with respect to the
weak*-topology. In particular, every sequence $(\lambda_i)$ of the form
\[
\lambda_i = \frac{1}{m_i} \sum_{n=1}^{m_i} \sigma_n, \quad i \geq 1,
\]
for some increasing sequence $(m_i)$, must have at least one cluster point $\lambda$, which by the relations in \eqref{rec} is necessarily left $\sigma_1$-harmonic (note that $\sigma_1$ is symmetric), that is to say
\[
\int_G g \cdot \lambda(\varphi) \, d\sigma_1(g) = \lambda(\varphi), \quad \forall \, \varphi \in \ell^\infty(G),
\]
where $G$ acts on $\ell^\infty(G)$ (and hence on its dual via the transpose map) by the
left regular representation. In particular, if we choose a sequence $(m_i)$ such that
\[
\lim_{i \ra \infty} \frac{1}{m_i} \sum_{n=1}^{m_i} \frac{|A \cap S_n|}{|S_n|} > 0,
\]
and a cluster point $\lambda$ of the corresponding sequence of means as above, then
$\lambda'(A) > 0$. The aim is now to show that this condition automatically forces the 
existence of a finite set $F \subset G$ such that $FAA^{-1}$ contains a right translate of
of every finite subset of $G$.

It will be convenient to adopt a slightly more general perspective on these matters. Let 
$(G,\mu)$ be a countable symmetric measured group. We say that an element $\lambda \in \cM(G)$ is \emph{left $\mu$-harmonic} if 
\[
\int_G \lambda(\varphi(g^{-1} \cdot)) \, d\mu(g) = \lambda(\varphi), \quad \forall \, \varphi \in \ell^\infty(G).
\]
We say that a set $T \subset G$ is \emph{right thick} if it contains a right translate of every
finite subset of $G$, that is to say, if for every finite subset $F \subset G$, there exists 
$g \in G$ such that $Fg \subset T$. It is not hard to see that a set $T \subset G$ is
right thick if and only if for every finite set $F \subset$, the intersection of all left
translates of the form $fT$, with $f \in F$, is non-empty. In particular, if $\lambda$ is 
a left $\mu$-harmonic mean on $G$ such that $\lambda'(T) = 1$, then 
\[
\int_G \lambda'(gT) \, d\mu^{*k}(g) = \lambda'(T) = 1, \quad \forall \, k \geq 1,
\] 
which shows that $\lambda'(gT) = 1$ for all $g \in G$. Since $\lambda'$ is a finitely 
additive measure, we conclude that for every finite set $F \subset G$, the intersection
of all left translates $fT$, with $f \in F$, still has full $\lambda'$-measure (so in particular
it is non-empty), which shows that $T$ must be right thick. \\

Theorem \ref{BF} will now follow from the following proposition.

\begin{proposition}
\label{mainBF}
Let $(G,\mu)$ be a measured group and suppose $A \subset G$ has 
positive measure with respect to some left $\mu$-harmonic mean on $G$.
Then there exists a finite set $F \subset A$ such that $FAA^{-1}$ has 
measure one with respect to some left $\mu$-harmonic
mean on $G$.
\end{proposition}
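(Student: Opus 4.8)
The plan is to exploit a pigeonhole/double-counting argument against the finitely additive measure $\lambda'$ coming from a left $\mu$-harmonic mean $\lambda$ with $\lambda'(A) > 0$. First I would fix such a $\lambda$ and set $\alpha = \lambda'(A) > 0$. The key observation is that for any single element $g \in G$, the translated set $gA$ also carries positive measure with respect to an appropriate harmonic mean: precisely, pushing $\lambda$ forward under left translation by $g$ and then re-averaging (using that $\mu$ is symmetric, so the set of left $\mu$-harmonic means is invariant under the convolution operators $\varphi \mapsto \int_G \varphi(s^{-1}\cdot)\,d\mu^{*k}(s)$ and compact, hence contains a fixed point of any given translation-averaging procedure) produces a left $\mu$-harmonic mean giving $gA$ measure at least $\alpha$. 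More robustly, I would argue directly with $\lambda$ itself: if $g \in \supp \mu^{*k}$ for some $k$, then from $\int_G \lambda'(g^{-1}C)\,d\mu^{*k}(g) = \lambda'(C)$ applied to $C$ with $\lambda'(C)$ near $1$ one controls how much mass translates can lose.

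The core combinatorial step is the standard "Steinhaus-type" trick: choose finitely many elements $f_1, \dots, f_r \in A$ that are \emph{spread out} enough — specifically, chosen greedily so that the sets $f_i^{-1} A$ (equivalently $f_i A^{-1}$ after inverting) have controlled pairwise overlaps — and show that $\bigcup_{i=1}^r f_i A A^{-1} = \bigcup_i f_i (f_i^{-1} \cdot \text{something})$ must be "almost everything" in the sense of having $\lambda''$-measure $1$ for some left $\mu$-harmonic mean $\lambda''$. The mechanism: if no finite $F \subseteq A$ gave $FAA^{-1}$ full measure with respect to \emph{every} (equivalently, some suitably chosen) left $\mu$-harmonic mean, one could iteratively pick $f_{r+1} \in A$ outside $\bigcup_{i \le r} f_i A A^{-1} \cdot(\text{stuff})$, forcing the translates $f_i A$ to be "disjoint enough" that $\sum_i \lambda'(f_i A) \le 1$; but each $\lambda'(f_i A)$ is bounded below by a fixed positive constant (obtained from $\alpha$ via the harmonicity-translation estimate above), which caps $r$ and hence terminates the construction — at which point the terminal $F = \{f_1,\dots,f_r\}$ witnesses the conclusion.

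Concretely, the steps in order: (1) fix $\lambda$ with $\lambda'(A) = \alpha > 0$; (2) show that for every $g$, $\lambda'(gA) \ge \beta$ for a fixed $\beta = \beta(\alpha,\mu) > 0$, using left $\mu$-harmonicity of $\lambda$ together with the fact that $\supp\mu^{*k}$ exhausts $G$ (so every $g$ lies in some $\supp\mu^{*k}$, giving $\mu^{*k}(g)\cdot \lambda'(gA) \le \lambda'(A)$ in one direction and a matching lower bound in the other after symmetrizing); (3) run the greedy selection of $f_1, f_2, \dots \in A$, maintaining the invariant that $f_{j}^{-1}f_i \notin AA^{-1}$ for $i < j$ — this is possible precisely as long as $\bigcup_{i} f_i A \ne G$, and it forces the sets $f_i A$ to be pairwise disjoint (if $f_i a = f_j a'$ with $a,a' \in A$ then $f_j^{-1} f_i = a' a^{-1} \in A A^{-1}$, contradiction); (4) since pairwise disjoint sets each of $\lambda'$-measure $\ge \beta$ can number at most $\lfloor 1/\beta\rfloor$, the process stops, yielding $F = \{f_1,\dots,f_r\}$ with $\bigcup_{f \in F} fA = G$, hence $FA \supseteq$ a right translate structure; (5) finally massage $\bigcup_{f\in F} fA = G$ into the stated conclusion about $FAA^{-1}$ having measure one with respect to \emph{some} left $\mu$-harmonic mean — here note $G = FA$ trivially has measure one, and $FAA^{-1} \supseteq FA \cap (\text{translates})$; more carefully, from $FA = G$ and $\lambda'(A) > 0$ one deduces that some left translate $fA$ of a single element meets $A$ in positive $\lambda'$-measure along enough cosets, so $F A A^{-1}$ absorbs a full-measure set for a harmonic mean obtained by one more compactness/averaging step.

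\textbf{Main obstacle.} The delicate point is step (2) combined with step (5): ensuring that the positive lower bound $\beta$ for $\lambda'(gA)$ is \emph{uniform} in $g$ — the naive bound $\mu^{*k}(g)\lambda'(gA) \le \lambda'(A)$ degrades as $k \to \infty$ since $\mu^{*k}(g) \to 0$ — so one must instead use harmonicity in the \emph{favorable} direction, writing $\lambda'(A) = \int \lambda'(s^{-1}A)\,d\mu(s)$ and iterating to see that the "bad" set of $g$ with $\lambda'(gA)$ small cannot itself carry large mass, then passing to a new harmonic mean concentrated off that bad set via weak* compactness of $\cM(G)$ and the Markov–Kakutani argument for the commuting family of averaging operators attached to $\supp\mu$. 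Getting the bookkeeping of this final compactness argument clean — so that the mean witnessing $\lambda''(FAA^{-1}) = 1$ is genuinely left $\mu$-harmonic — is where the real work lies; everything else is the classical Steinhaus pigeonhole.
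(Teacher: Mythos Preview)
Your step (2) is where the argument breaks, and although you correctly flag it as the main obstacle, the proposed workaround does not go through. The function $g \mapsto \lambda'(gA)$ is a bounded left $\mu$-harmonic function on $G$; on any non-Liouville group (the only interesting case --- for Liouville groups every $\mu$-harmonic mean is $G$-invariant and the classical Steinhaus pigeonhole works verbatim) such a function will typically have infimum $0$, so no uniform $\beta > 0$ exists. Your suggested fix --- ``pass to a new harmonic mean concentrated off the bad set'' via Markov--Kakutani --- is not a mechanism that produces such a mean without substantial additional input; weak*-limits of harmonic means are harmonic, but nothing in the averaging forces them to avoid the sublevel set of a harmonic function. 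There is also a slip in step (4): termination of the greedy selection with $f_i \in A$ only yields $A \subset FAA^{-1}$, not $\bigcup_{f\in F} fA = G$; if you instead let $f_i$ range over all of $G$, maximality of the disjoint family gives $G = FAA^{-1}$ outright, but then $F \not\subset A$ and, more importantly, you still need step (2) to bound $|F|$.

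The paper's route is quite different and sidesteps the uniform-lower-bound issue entirely. It first invokes a lemma (black-boxed, with proof deferred to \cite{BF3}) producing a finite $F$ and a \emph{new} harmonic mean $\eta$ with $\eta'(FA) \geq 1 - \eps\,\eta'(A)$. This immediately gives the inclusion
\[
FAA^{-1} \supset \{\, g \in G : \eta'(gA) > \eps\,\eta'(A)\,\},
\]
since $\eta'(gA) + \eta'(FA) > 1$ forces $gA \cap FA \neq \emptyset$. The function $u(g) = \eta'(gA) - \eps\,\eta'(A)$ is bounded, left $\mu$-harmonic, and has $u(e) > 0$. The paper then proves a short ``zero-one'' lemma: for any such $u$, choose $g_m$ with $u(g_m) \to \|u\|_\infty$ and form the means $\lambda_m(\phi) = m^{-1}\sum_{n=1}^m \int_G \phi(xg_m)\,d\cmu^{*n}(x)$; any weak*-cluster point is left $\mu$-harmonic, satisfies $\lambda(u) = \|u\|_\infty$, and hence (by Chebyshev) gives measure one to $\{u > 0\} \subset FAA^{-1}$. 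So rather than looking for a uniform lower bound on $\eta'(gA)$, the paper manufactures a harmonic mean that lives where $\eta'(gA)$ is near its \emph{supremum} --- this is the idea your ``concentrated off the bad set'' gesture is reaching for, but made precise via the maximizing sequence $(g_m)$ rather than via pigeonhole.
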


To prove this proposition, we will need the following result, which is not hard, and follows
from quite standard correspondence principles. However, the author is not aware of a (short) proof which avoids various technical manipulations with extreme points in the simplex of $\mu$-harmonic measures on compact $G$-spaces. We shall 
therefore omit the proof, and refer the interested reader to Proposition 1.2 in \cite{BF3},
where a much stronger result is proven.

\begin{lemma}
Fix $\eps > 0$ and suppose $A \subset G$ has positive measure with respect to some 
left $\mu$-harmonic mean. Then there exists a finite set $F \subset G$ and a (possibly
different) left $\mu$-harmonic mean $\eta$ on $G$ such that $\eta(FA) \geq 1 - \eps$. 
\end{lemma}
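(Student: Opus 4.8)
The plan is to transport the statement out of the purely finitely-additive world of means on $\ell^\infty(G)$ and into the countably-additive world of measures on a compact $G$-space, where ``continuity from below'' is available. Concretely, I would identify $\ell^\infty(G)$ with $C(\beta G)$, with $\beta G$ the Stone--\v{C}ech compactification of $G$ carrying the continuous $G$-action that extends left translation. Under the Gelfand--Riesz correspondence a mean $\lambda$ on $G$ becomes a Borel probability measure $\widehat\lambda$ on $\beta G$; a left $\mu$-harmonic mean becomes precisely a $\mu$-harmonic (i.e.\ $\mu$-stationary, in the sense of the first section) probability measure on $\beta G$; and for every $B \subset G$ one has $\lambda'(B) = \widehat\lambda(\overline B)$, where $\overline B \subset \beta G$ is the (clopen) closure of $B$, which satisfies $g\overline B = \overline{gB}$ for all $g \in G$.

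First I would record the elementary remark that $\mu$-stationarity is preserved under restriction to a $G$-invariant Borel set followed by renormalisation: if $\widehat\lambda$ is $\mu$-stationary and $\Omega \subset \beta G$ is $G$-invariant and Borel with $\widehat\lambda(\Omega) > 0$, then $\widehat\eta := \widehat\lambda(\Omega)^{-1}\,\widehat\lambda|_\Omega$ is again $\mu$-stationary. This is immediate once one notes that the defining identity $\int\phi\,d\widehat\lambda = \int_G\int\phi(g^{-1}x)\,d\widehat\lambda(x)\,d\mu(g)$, known a priori for $\phi \in C(\beta G)$, extends to all bounded Borel $\phi$ (it is an equality of countably additive measures), and one then applies it to $\phi = \chi_\Omega\psi$, using that $\chi_\Omega(x) = \chi_\Omega(g^{-1}x)$ by $G$-invariance.

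Then, starting from a left $\mu$-harmonic mean $\lambda$ with $\lambda'(A) = c > 0$, i.e.\ $\widehat\lambda(\overline A) = c$, I would set $\Omega = \bigcup_{g\in G} g\overline A = \bigcup_{g\in G}\overline{gA}$, an open $G$-invariant subset of $\beta G$ with $\widehat\lambda(\Omega) \geq c > 0$, and pass to the $\mu$-stationary measure $\widehat\eta := \widehat\lambda(\Omega)^{-1}\widehat\lambda|_\Omega$, which has $\widehat\eta(\Omega) = 1$; let $\eta$ be the corresponding left $\mu$-harmonic mean on $G$. Since $G$ is countable, enumerate $G = \{g_1, g_2, \dots\}$ and set $F_n = \{g_1, \dots, g_n\}$, so that the clopen sets $\overline{F_n A} = \bigcup_{i\le n}\overline{g_i A}$ increase to $\Omega$; continuity from below of the countably additive measure $\widehat\eta$ then gives $\eta'(F_n A) = \widehat\eta(\overline{F_n A}) \to \widehat\eta(\Omega) = 1$, and choosing $n$ with $\widehat\eta(\overline{F_n A}) \ge 1-\eps$ and taking $F := F_n$ completes the proof.

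The one load-bearing point --- and, I suspect, the reason a proof staying entirely inside $\ell^\infty(G)$ is clumsy --- is that means are only finitely additive, so the final limit can genuinely fail for $\lambda$ itself (a Banach-type mean may give each $F_n A$ measure zero while giving their union measure one); it is the passage to $\beta G$, where the stationary functional is realised by a countably additive measure, that makes the limiting argument legitimate. Everything else --- the restriction remark and the clopen-set bookkeeping $\overline{FA} = \bigcup_{f\in F}\overline{fA}$ --- is routine. (One could instead ergodically decompose $\widehat\lambda$ and run the same count on a component charging $\overline A$, on which $\widehat\eta(\Omega)=1$ is forced by ergodicity; but restricting to the invariant part $\Omega$ avoids any appeal to extreme points, or to a metrizable model of $\beta G$.)
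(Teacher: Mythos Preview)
Your argument is correct. The identifications $\ell^\infty(G)\cong C(\beta G)$, mean $\leftrightarrow$ Borel probability measure, left $\mu$-harmonic $\leftrightarrow$ $\mu$-stationary, and $\lambda'(B)=\widehat\lambda(\overline B)$ with $\overline B$ clopen are all standard, and your two genuine steps --- that stationarity passes to the restriction to a $G$-invariant Borel set (because the identity $\mu*\widehat\lambda=\widehat\lambda$ of Radon measures extends from $C(\beta G)$ to bounded Borel functions), and that continuity from below of the countably additive $\widehat\eta$ forces $\widehat\eta(\overline{F_nA})\to\widehat\eta(\Omega)=1$ --- are valid and cleanly isolated.

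As for comparison with the paper: the paper does \emph{not} prove this lemma. It explicitly omits the proof, remarking that it ``follows from quite standard correspondence principles'' but that the author ``is not aware of a (short) proof which avoids various technical manipulations with extreme points in the simplex of $\mu$-harmonic measures on compact $G$-spaces'', and refers instead to Proposition~1.2 of \cite{BF3}. Your proof is therefore not merely different from the paper's --- it fills a gap the author flags, and does so by a route the author seems not to have found. The extreme-point approach the paper alludes to would presumably ergodically decompose $\widehat\lambda$, pick an ergodic component charging $\overline A$, and use ergodicity to force $\widehat\eta(\Omega)=1$; your observation that one can simply \emph{restrict} to the invariant hull $\Omega=\bigcup_g g\overline A$ bypasses this entirely, needs no Choquet theory or metrizability of the model, and is strictly more elementary. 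Your parenthetical at the end already identifies this trade-off correctly.
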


If one is willing to take this lemma for granted, then we argue as follows. Suppose $A \subset G$ has positive $\lambda'$-measure for some left $\mu$-harmonic mean $\lambda$ on $G$. Fix $\eps > 0$ and find, by the previous lemma, a finite set 
$F \subset G$ and a left $\mu$-harmonic mean $\eta$ on $G$ such that 
\[
\eta(FA) \geq 1 - \eps \cdot \eta(A).
\]
We note that
\[
FAA^{-1} \supset \big\{ g \in G \, : \, \eta(FA \cap gA) > 0 \big\} \supset 
\big\{ g \in G \, : \, \eta(gA) > \eps \cdot \eta(A) \Big\},
\]
and the function
\[
u(g) = \eta(gA) - \eps \cdot \eta(A), \quad g \in G,
\]
is a real-valued bounded left $\mu$-harmonic function on $G$. Furthermore, if 
$0 < \eps < 1$, then $u(e) > 0$ and $u$ is \emph{positively correlated} in the
sense that
\[
\|u\|_\infty = \sup\big\{ u(g) \, : \, g \in G \big\},
\]
so Proposition \ref{mainBF} will follow from the "zero-one law" stated below. 

\begin{lemma}
\label{measone}
Let $(G,\mu)$ be a measured group and suppose $u$ is a bounded 
real-valued left $\mu$-harmonic function on $G$. Define the set 
\[
S_u = \Big\{ g \in G \, : \, u(g) > 0 \Big\} \subset G.
\]
If $u$ is positively correlated, then there exists a left $\mu$-harmonic mean 
which gives measure one to the set $S_u$.
\end{lemma}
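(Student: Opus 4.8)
The plan is to first produce a \emph{single} left $\mu$-harmonic mean $\Lambda$ on $G$ with $\Lambda(u)=\|u\|_\infty$, and then to read off the conclusion from a one-line positivity estimate. Write $M=\|u\|_\infty$; one may assume $u\not\equiv 0$ (otherwise $S_u=\emptyset$), so $M>0$, and the positive-correlation hypothesis gives both $M=\sup_{g\in G}u(g)$ and $-M\le u\le M$ on $G$.

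To build $\Lambda$, I would work inside the convex weak-$*$ compact set $\cM(G)$ of all means on $\ell^\infty(G)$, together with the affine map $T\colon\cM(G)\to\cM(G)$ defined by $(T\lambda)(\varphi)=\int_G\lambda\big(\varphi(g^{-1}\,\cdot\,)\big)\,d\mu(g)$. Since $G$ is countable, $T$ is weak-$*$ continuous, and by definition its fixed points are exactly the left $\mu$-harmonic means. The key observation is that $T$ preserves the functional $\lambda\mapsto\lambda(u)$: left $\mu$-harmonicity of $u$ says $\int_G u(g^{-1}x)\,d\mu(g)=u(x)$ for every $x\in G$, i.e.\ $\int_G \varphi(g^{-1}\,\cdot\,)\,d\mu(g)=u$ in $\ell^\infty(G)$ when $\varphi=u$, and applying the (norm-continuous) functional $\lambda$ gives $(T\lambda)(u)=\lambda(u)$ for every $\lambda$. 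Next I would note that $\lambda\mapsto\lambda(u)$ is weak-$*$ continuous, hence attains its maximum over the compact set $\cM(G)$; evaluating at point masses shows the maximum is at least $\sup_g u(g)=M$, while $u\le M\cdot 1$ forces it to be at most $M$, so it equals $M$. Therefore
\[
C:=\{\lambda\in\cM(G)\;:\;\lambda(u)=M\}
\]
is nonempty, convex, weak-$*$ compact, and $T$-invariant, and the Markov--Kakutani fixed point theorem applied to $T|_C$ yields $\Lambda\in C$ with $T\Lambda=\Lambda$ --- a left $\mu$-harmonic mean with $\Lambda(u)=M$.

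To finish, I would use that $M-u\ge 0$ on $G$ while $\Lambda(M-u)=M-\Lambda(u)=0$. On the complement $S_u^{c}=\{g\in G:u(g)\le 0\}$ one has $M-u\ge M$, so $M\cdot\chi_{S_u^{c}}\le M-u$ pointwise on $G$; applying the positive functional $\Lambda$ gives $M\cdot\Lambda(\chi_{S_u^{c}})\le\Lambda(M-u)=0$, and since $M>0$ this forces $\Lambda(\chi_{S_u^{c}})=0$. By linearity and unitality, $\Lambda'(S_u)=\Lambda(\chi_{S_u})=1-\Lambda(\chi_{S_u^{c}})=1$, as required.

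The one step that genuinely exploits a hypothesis (rather than soft functional analysis) is the nonemptiness of $C$: the positive-correlation assumption is exactly what makes the maximum of $\lambda\mapsto\lambda(u)$ over \emph{all} means coincide with the full sup-norm $\|u\|_\infty$, so that the defect $M-u\ge 0$ can be annihilated by a harmonic mean. (In fact $\sup_g u(g)>0$ already suffices.) Since everything else --- weak-$*$ continuity of $T$, compactness of $\cM(G)$, Markov--Kakutani, and the final positivity estimate --- is standard, I do not expect a serious obstacle.
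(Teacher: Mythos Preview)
Your proof is correct. Both you and the paper split the argument into (i) producing a left $\mu$-harmonic mean $\Lambda$ with $\Lambda(u)=\|u\|_\infty$, and (ii) deducing $\Lambda'(S_u)=1$. For (ii) the paper runs a Chebyshev estimate with an auxiliary parameter $\eps$; your one-line bound $M\,\chi_{S_u^c}\le M-u$ is tidier and reaches the same conclusion directly.

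The genuine difference is in (i). The paper builds $\Lambda$ by hand: it fixes a sequence $(g_m)$ with $u(g_m)\to\sup_g u(g)$, forms the means
\[
\lambda_m(\phi)=\frac{1}{m}\sum_{n=1}^m\int_G\phi(xg_m)\,d\cmu^{*n}(x),
\]
notes that left $\mu$-harmonicity of $u$ forces $\lambda_m(u)=u(g_m)$, and passes to a weak-$*$ cluster point, which the Ces\`aro structure makes left $\mu$-harmonic. You instead observe that the convolution operator $T$ preserves the face $C=\{\lambda\in\cM(G):\lambda(u)=M\}$ and invoke Markov--Kakutani on $C$. Your route is more abstract and slightly shorter; the paper's is constructive and avoids quoting a fixed-point theorem (the Ces\`aro averaging is, of course, essentially the proof of Markov--Kakutani for a single affine map). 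Both rely on weak-$*$ compactness of $\cM(G)$ in the same essential way, and your justification of $(T\lambda)(u)=\lambda(u)$ via norm-convergence of the partial sums in $\ell^\infty(G)$ is the right way to handle countably supported $\mu$.
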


To prove this lemma, we first note that for any mean $\lambda$ on $G$, for any
$\eps > 0$ and for every bounded function $u$ on $G$, we have
\begin{eqnarray*}
\lambda'\big(\big\{ g \in G \, : \, u(g) > 0 \big\} 
&\geq&
\lambda'\big(\big\{ g \in G \, : \, u(g) \geq (1-\eps) \cdot \|u\|_\infty \big\}  \\
&=& 
1 - \lambda'\big(\big\{ g \in G \, : \, u(g) < (1-\eps) \cdot \|u\|_\infty \big\} \\
&=&
1 - \lambda'\big(\big\{ g \in G \, : \, \|u\|_\infty - u(g) > \eps \cdot \|u\|_\infty \big\} \\
&\geq&
1 - \frac{1}{\eps \cdot \|u\|_\infty} \cdot (\|u\|_\infty - \lambda(u)),
\end{eqnarray*}
by Chebyshev's inequality (which works equally well for finitely additive probability 
measures). 
Hence it suffices to show that whenever $u$ is a positively correlated left $\mu$-harmonic function, there exists a left $\mu$-harmonic mean $\lambda$ such that $\lambda(u) = \|u_\infty\|$. To prove this, we fix  a sequence $(g_n)$ such that
\[
\lim_n u(g_n) = \sup\big\{ u(g) \, : \, g \in G \big\},
\]
and define the sequence $(\lambda_m)$ of means on $G$ by
\[
\lambda_m(\phi) = \frac{1}{m} \sum_{n=1}^m \int_G \phi(xg_m) \, d\cmu^{*n}(x), \quad \phi \in \ell^\infty(G).
\]
Since $u$ is left $\mu$-harmonic, we have $\lambda_m(u) = u(g_m)$ for all $m$, and
one readily checks that any cluster point $\lambda$ of the sequence $(\lambda_m)$ in $M(G)$ is left $\mu$-harmonic and satisfies $\lambda(u) = \lim_m u(g_m)$. 

\section{Appendix I: Harmonic functions and affine isometric actions on Hilbert spaces}

As part of Theorem \ref{KL}, we proved that if $(G,\mu)$ is a measured Liouville group and $u$ is a left Lipschitz and left (quasi-)$\mu$-harmonic function on $G$, then $u$ must be a homomorphism. The aim of this 
appendix is to show that the combination "LEFT Lipschitz" and 
"LEFT quasi-$\mu$-harmonic" is crucial, and if one (but not both) is replaced by a "RIGHT", then the situation is quite different. Indeed,
we shall prove the following theorem, whose origin is hard to track down, but which is well-known to experts.

\begin{theorem}[Folklore]
Every infinite, finitely generated and symmetric measured group $(G,\mu)$,
where $\mu$ is assumed to be finitely supported,  
admits a non-trivial left Lipschitz and \emph{right} $\mu$-harmonic function.
\end{theorem}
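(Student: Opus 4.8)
The plan is to produce the desired function as the "displacement along an orbit" of a suitable affine isometric action of $G$ on a Hilbert space, exploiting the fact that $(G,\mu)$ being finitely supported and symmetric forces the existence of a nontrivial one when the group is infinite. Concretely, I would take $\cH = \ell^2(G)$ with the left regular representation $\pi$, and look for a $1$-cocycle $c : G \to \cH$, i.e. $c(g_1g_2) = c(g_1) + \pi(g_1)c(g_2)$, with $c$ not a coboundary. Given such a $c$, define $u(g) = \|c(g)\|^2$. The point is that $u$ is automatically \emph{left Lipschitz}: since $|\|c(xg)\| - \|c(x)\|| = |\|c(x) + \pi(x)c(g)\| - \|c(x)\|| \le \|c(g)\|$, and $\|c(xg)\|, \|c(x)\| \le \|c(x)\| + \|c(g)\|$ uniformly in some crude way once one works with $\|c\|$ rather than $\|c\|^2$; more precisely one should verify left-Lipschitzness directly for $u = \|c\|^2$ using that $c$ takes only finitely many "increments" $\pi(x)c(s)$ for $s$ in the finite support of $\mu$. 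For \emph{right $\mu$-harmonicity}, expand
\[
\int_G \|c(gs)\|^2 \, d\mu(s) = \int_G \big( \|c(g)\|^2 + 2\,\re\langle c(g), \pi(g)c(s)\rangle + \|c(s)\|^2 \big) \, d\mu(s);
\]
the first term is $u(g)$, the last is the constant $\int_G \|c(s)\|^2 d\mu(s)$, and the cross term is $2\,\re\big\langle c(g), \pi(g)\int_G c(s)\,d\mu(s)\big\rangle$. So $u$ is right $\mu$-harmonic precisely when $\int_G c(s)\, d\mu(s) = 0$, equivalently (by symmetry of $\mu$ and the cocycle identity) when $c$ is "harmonic" in the sense that $\pi(\mu)c = c$. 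Such a harmonic cocycle always exists and is a coboundary only if it is zero: this is the standard fact (one produces it by solving the Poisson-type equation, or by a limiting argument $c = \lim \frac{1}{n}\sum_{k<n} \pi(\mu)^k c_0$ applied to any cocycle $c_0$), and it is nonzero exactly when $G$ does not have property (HFD)/is not "Liouville for affine actions" — which for an \emph{infinite} finitely generated group with finitely supported symmetric $\mu$ is guaranteed because $G$ admits a proper length function, hence the "distance cocycle" construction of the previous sections yields a nonconstant left-Lipschitz function whose associated cocycle cannot be a coboundary.

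The cleaner route, which I would actually follow, is to bypass Hilbert spaces and recycle the machinery already developed in the paper. Apply the construction from Subsection \ref{subsecLD}: with $d$ a word metric, pass to a subsequence to get the left-Lipschitz, left $\mu$-quasiharmonic function $\phi$ with $\int_G \phi(sg)\,d\cmu(s) = \phi(g) + \ell_d(\mu)$. Now the key move is to symmetrize on the \emph{correct} side. Define
\[
u(g) = \int_B \widetilde{\phi}(g^{-1}b)\, dm(b)
\]
where $(B,m)$ is the Poisson boundary and $\widetilde{\phi}$ is the bounded left $\mu$-harmonic function attached to the "right $\mu$-obstruction" data — but this only gives bounded functions, not what we want. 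Instead, the honest symmetrization is: set $\check\phi(g) = \phi(g^{-1})$, which is \emph{right} Lipschitz and \emph{right} $\cmu$-quasiharmonic; since $\mu = \cmu$, it is right $\mu$-quasiharmonic. The function $\check\phi$ is right Lipschitz but a priori not left Lipschitz, so it is not yet what we need. The fix is to average $\check\phi - \mu*\check\phi$ type expressions, or rather to observe that the function
\[
u(g) = \check\phi(g) - \int_G \check\phi(sg)\,d\mu(s) \quad\text{won't work either}
\]
— so in fact I think the right object really is $\|c(g)\|^2$ for the harmonic cocycle $c$ obtained by the vector-valued analogue of the $\phi$-construction, namely $c(g) = \lim_j \frac{1}{n_j}\sum_{k<n_j} \pi(\mu)^k \partial(g)$ where $\partial(g) = \delta_e - \delta_{g^{-1}} \in \ell^2$; one checks $\partial$ is a coboundary for $\pi$, its harmonic average $c$ is a genuine cocycle, $c$ is left-Lipschitz-bounded because $\|c(xg) - c(x)\| = \|\pi(x)c(g)\| = \|c(g)\|$ gives control, and $\pi(\mu)c = c$ gives right $\mu$-harmonicity of $\|c\|^2$ exactly as in the cross-term computation above. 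Finally $c \ne 0$ because otherwise $\partial$ would be a coboundary-difference that vanishes in $L^2$-limit, forcing $\ell^2$-harmonic vanishing, which contradicts $G$ being infinite (an infinite group has no nonzero harmonic $\ell^2$ "gradient field" only when... — here one uses that $\ell^2(G)$ reduced cohomology in degree one is nonzero for infinite groups with finitely supported symmetric $\mu$, e.g. because $\langle c(g), \cdot\rangle$ recovers a nonconstant function).

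The main obstacle, and the step I expect to require the most care, is establishing that the harmonic cocycle $c$ is \emph{nonzero} — i.e., that the infinite group $G$ genuinely supports a nontrivial reduced $1$-cocycle into $\ell^2(G)$ that is $\mu$-harmonic. For the left regular representation this is a classical fact (it is essentially equivalent to the statement that an infinite, finitely generated group with a finitely supported symmetric $\mu$ admits a nonconstant function of finite Dirichlet energy, proven via the transience/recurrence dichotomy together with a separate argument in the recurrent case, or uniformly via the observation that the simple random walk's Green-function differences are never constant). I would either cite this directly or reprove it in a line by noting that if $c \equiv 0$ then the limiting procedure collapses the sequence $\frac1{n_j}\sum_{k<n_j}\pi(\mu)^k(\delta_e - \delta_{g^{-1}})$ to $0$ for every $g$, which upon pairing with $\delta_e$ would force $\mu^{*k}(e) \to$ a limit incompatible with $G$ being infinite. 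All remaining verifications — left-Lipschitzness of $\|c(\cdot)\|^2$, its right $\mu$-harmonicity, and non-triviality following from $c \ne 0$ — are the routine cross-term expansions sketched above.
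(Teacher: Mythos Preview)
Your proposal has a genuine and fatal gap: the choice of $\ell^2(G)$ with the left regular representation does not work, precisely in the case the theorem is most interesting, namely infinite amenable groups. A nonzero $\mu$-harmonic cocycle $c:G\to\ell^2(G)$ exists if and only if the first reduced cohomology $\overline{H}^1(G,\ell^2 G)$ is nonzero, equivalently the first $\ell^2$-Betti number of $G$ is positive; by Cheeger--Gromov this vanishes for every infinite amenable group. Concretely, for $G=\bZ$ with $\mu=\tfrac12(\delta_1+\delta_{-1})$, the harmonic condition forces $c(1)\in\ell^2(\bZ)$ to be shift-invariant, hence zero. Your proposed one-line nontriviality argument is also wrong as stated: pairing with $\delta_e$ gives $\tfrac{1}{n_j}\sum_{k<n_j}(\mu^{*k}(e)-\mu^{*k}(g))$, and for the simple random walk on $\bZ$ this tends to $0$ for every $g$, so no contradiction arises.

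There is a second, independent problem with the function $u(g)=\|c(g)\|^2$. It is only right $\mu$-\emph{quasi}harmonic (your own expansion leaves the nonzero constant $\int_G\|c(s)\|^2\,d\mu(s)$), and it is \emph{not} left Lipschitz: $|\|c(xg)\|^2-\|c(x)\|^2|\le\|c(g)\|\cdot(\|c(xg)\|+\|c(x)\|)$, and the second factor is unbounded in $x$ whenever $c$ is nonzero and not a coboundary. Both issues disappear if you take instead the \emph{linear} function $f(g)=\langle y,c(g)\rangle$ for a suitable $y$: then $|f(xg)-f(x)|=|\langle y,\pi(x)c(g)\rangle|\le\|y\|\,\|c(g)\|$ gives left Lipschitz, and harmonicity of $c$ gives $f*\mu=f$ exactly.

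The paper's proof runs along the lines you first sketched but repairs both defects. It works with an affine isometric action on an \emph{unspecified} real Hilbert space $\cH$ with unbounded orbits, whose existence for every infinite group without Property (T) (hence every infinite amenable group) is the standard characterization of (T); for groups with (T), which are non-amenable, bounded right $\mu$-harmonic functions already exist. The harmonic cocycle is produced by minimizing the energy $E(x)=\int_G\|\alpha(s)x-x\|^2\,d\mu(s)$, and the desired function is the linear $f(g)=\langle y,\alpha(g)x_o\rangle$, shown to be unbounded via the uniform boundedness principle.
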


Since every \emph{non-amenable} measured group $(G,\mu)$ admits a
wealth of non-trivial \emph{bounded} right $\mu$-harmonic functions,
the theorem is perhaps most interesting for amenable groups. However,
the construction which we will describe below works for a larger class of
groups, namely those which admit affine isometric actions on (real) Hilbert 
spaces with \emph{unbounded} orbits. It is well-known (see e.g. Theorem 
13.10 in \cite{Gla}) that this is equivalent to assuming that the group $G$ does 
\emph{not} have Kazhdan's Property (T). In particular, our construction will 
work for every countable (infinite) \emph{amenable} group. 

Recall that if $\cH$ is a real Hilbert space, then a map $T : \cH \ra \cH$ is 
an \emph{affine isometry} $T$ if it can be written on the form
\[
Tx = Ux + b, \quad x \in \cH,
\]
for some \emph{linear} isometry $U$ of $\cH$ and $b \in \cH$. Clearly, the
set of affine isometries of $\cH$ forms a group $\Aff(\cH)$ under composition,
and a homomorphism $\alpha : G \ra \Aff(\cH)$ is called an 
\emph{affine isometric action} of $G$ on $\cH$. Explicitly, we have
\[
\alpha(g)x = \pi(g)x + b(g)
\]
for some linear isometric representation $\pi$ of $G$ and a map $b : G \ra \cH$
which satisfies
\[
b(gh) = b(g) + \pi(g)b(h), \quad \forall \, g, h \in G.
\]
We shall refer to such maps as \emph{$\pi$-cocycles}, and we note that the 
action $\alpha$ has bounded orbits if and only if the corresponding $b$ is 
a norm-bounded function on $G$. 

\begin{proposition}
Let $(G,\mu)$ be a finitely generated and symmetric measured group, 
where $\mu$ is assumed to be finitely supported, and suppose $\alpha$ 
is an affine isometric action of $G$ on a \emph{real} Hilbert space $\cH$ 
without unbounded orbits. Then there exists $x_o$ and $y$ in $\cH$ such 
that 
\[
f(g) = \langle y, \alpha(g) \cdot x_o \rangle_\cH, \quad g \in G,
\]
is an \emph{unbounded}, left Lipschitz and \emph{right} $\mu$-harmonic function on $G$.
\end{proposition}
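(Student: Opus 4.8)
The plan is to produce the pair $(x_o,y)$ inside the given Hilbert space $\cH$, working throughout with the data of $\alpha$ itself: write $\alpha(g)x = \pi(g)x + b(g)$, where $\pi$ is the orthogonal part and $b$ the $\pi$-cocycle, and note that $\alpha$ has unbounded orbits exactly when $b$ is unbounded in norm (distinct orbits stay at bounded mutual distance because each $\pi(g)$ is an isometry). I would first observe that the left Lipschitz property costs nothing: from $\alpha(sg)x_o = \alpha(s)\alpha(g)x_o$ and the isometry of $\pi(s)$ one computes $f(sg)-f(s) = \langle \pi(s)^{-1}y,\ \alpha(g)x_o - x_o\rangle$, whence $\sup_s|f(sg)-f(s)| \le \|y\|\,\|\alpha(g)x_o - x_o\| < \infty$ for \emph{every} choice of $x_o$ and $y$.

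Next I would reduce right $\mu$-harmonicity to a single linear equation. Because $\alpha(g)$ is affine and $\mu$ is a probability measure, $\int_G \alpha(g)\alpha(s)x_o\,d\mu(s) = \alpha(g)(A_\mu x_o)$, where $A_\mu$ is the affine map with linear part $\pi(\mu)=\int_G \pi(s)\,d\mu(s)$ and translation part $b(\mu)=\int_G b(s)\,d\mu(s)$. Consequently $\int_G f(gs)\,d\mu(s) - f(g) = \langle \pi(g)^{-1}y,\ w\rangle$ with $w := (\pi(\mu)-I)x_o + b(\mu)$, so $f$ is right $\mu$-harmonic the moment $w$ is orthogonal to the closed $\pi(G)$-invariant subspace generated by $y$. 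The decisive move is to choose the basepoint $x_o$ so that $w=0$, i.e. to solve the cohomological equation $(I-\pi(\mu))x_o = b(\mu)$ in $\cH$; then the defect vanishes and $y$ becomes free.

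The symmetry of $\mu$ is what makes this equation tractable. Since $\cmu = \mu$ and $\pi$ is orthogonal, $\pi(\mu)$ is a self-adjoint contraction, and strict convexity (the same argument that gave $\pi(\mu)\xi=\xi\Rightarrow\xi$ invariant earlier) identifies $\ker(I-\pi(\mu))$ with the space $\cH^{\pi(G)}$ of $\pi(G)$-invariant vectors. Decomposing $b = b_0 \oplus b_1$ along $\cH = \cH^{\pi(G)} \oplus (\cH^{\pi(G)})^\perp$, the invariant part $b_0$ obeys $b_0(gh)=b_0(g)+b_0(h)$, so it is a homomorphism into $\cH^{\pi(G)}$; in particular, if $b_0\neq 0$ one may already take $x_o=0$ and $y\in\cH^{\pi(G)}$, for then $f=\langle y,b_0\rangle$ is a non-zero real homomorphism, hence unbounded and (being odd against the symmetric $\mu$) right $\mu$-harmonic. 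Symmetry also forces $b_0(\mu)=\int_G b_0\,d\mu = 0$, so $b(\mu)\in(\cH^{\pi(G)})^\perp = \overline{\operatorname{Range}(I-\pi(\mu))}$ and the cohomological equation is always solvable \emph{up to closure}, exactly so when $I-\pi(\mu)$ has closed range.

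With a genuine solution $x_o$ (so $w=0$) the proof then finishes by the uniform boundedness principle: the orbit $\{\alpha(g)x_o\}$ is unbounded, so the functionals $z\mapsto \langle z,\alpha(g)x_o\rangle$ are not uniformly bounded in norm, and Banach--Steinhaus supplies $y$ with $\sup_g|\langle y,\alpha(g)x_o\rangle| = +\infty$; this $f$ is unbounded, right $\mu$-harmonic, and left Lipschitz by the first step. The real obstacle is the solvability of $(I-\pi(\mu))x_o = b(\mu)$ inside $\cH$. When $1$ is an accumulation point of the spectrum of $\pi(\mu)$ --- precisely the amenable situation, for instance a cocycle into the regular representation --- the range of $I-\pi(\mu)$ is not closed, $b(\mu)$ may lie only in its closure, and no single basepoint kills $w$. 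In that regime one must realize harmonicity at the level of the reduced cohomology class of $b$: passing to its harmonic representative $\tilde b$, the energy minimizer characterized exactly by $\tilde b(\mu)=0$, restores the computation of the second paragraph with $x_o=0$, and one is left to verify that $\tilde b$ is still unbounded so that Banach--Steinhaus applies. Ensuring this harmonic representative is non-zero --- which is where the failure of property $(\mathrm{T})$ genuinely enters --- is the crux of the argument.
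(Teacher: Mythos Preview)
Your approach and the paper's converge on the same target: finding $x_o \in \cH$ with $\int_G(\alpha(s)x_o - x_o)\,d\mu(s) = 0$, i.e.\ $(I-\pi(\mu))x_o = b(\mu)$. The paper reaches this in one step via the energy functional
\[
E(x) = \int_G \|\alpha(s)x - x\|^2 \, d\mu(s):
\]
any critical point satisfies precisely that equation (differentiate $E(x_o+tv)$ at $t=0$ and use that $\mu$ is symmetric and $\cH$ is real), and the paper simply asserts, as part of its sketch, that a local minimum exists. Your ``harmonic representative'' $\tilde b$ is nothing other than $\tilde b(g) = \alpha(g)x_o - x_o$ for this minimizer, so the two routes coincide in substance; you arrive there only after first attempting, and correctly abandoning, a direct spectral inversion of $I-\pi(\mu)$. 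Your separate treatment of the case $b_0 \neq 0$ is correct but not needed once one argues via $E$: the energy argument absorbs it uniformly. The uniform boundedness step for producing $y$ is identical in both.

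You rightly isolate the crux --- whether the minimizer of $E$ actually lives in $\cH$ --- and it is worth noting that the paper's sketch does not justify this point either (``One readily checks that $E$ admits a local minimum''). One remark, though: once such an $x_o \in \cH$ exists, your residual worry about $\tilde b$ being bounded dissolves immediately, since $\|\tilde b(g)\| = \|\alpha(g)x_o - x_o\|$ and boundedness of one $\alpha$-orbit forces boundedness of all, contrary to hypothesis. So the only genuine lacuna, common to both arguments, is the existence of the energy minimizer; everything else in your outline is sound, just longer than the paper's direct variational route.
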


\begin{proof}[Sketch of the proof]
Recall that $\alpha$ can be written on the form
\[
\alpha(g)x = \pi(g)x + b(g),
\]
for some linear isometric representation $\pi$ of $G$ and a $\pi$-cocycle 
$b : G \ra \cH$. The assumption that the $\alpha$ has unbounded orbits 
simply means that
\[
\sup_{g \in G} \|\alpha(g)x\| = \infty, \quad \forall \, x \in \cH,
\]
and we shall prove that there exists $x_o \in \cH$ such that the orbit map
\[
F(g) = \alpha(g)x_o, \quad g \in G,
\]
satisfies $F * \mu = F$ in $\cH$, or equivalently (after some easy manipulations)
\begin{equation}
\label{eqharmonic}
\int_G \big( x_o - \alpha(s) x_o\big) \, d\mu(s) = 0.
\end{equation}
By the uniform boundedness principle, if 
\[
\sup_{g \in G} \big| \langle y, \alpha(g) x_o \rangle\big| < +\infty
\]
for all $y \in \cH$, then 
\[
\sup_{g \in G} \|\alpha(g) x_o\|< +\infty,
\]
which is a contradiction, and we conclude that there must exist 
$y \in \cH$ such that the function
\[
f(g) = \langle y, \alpha(g)x_o \rangle, \quad g \in G,
\]
is an \emph{unbounded} (and hence non-constant) right $\mu$-harmonic function 
on $G$. Also note that 
\[
\big|f(sg) - f(s)\big| = \big| \langle y,\pi(s)\big(\pi(g)x_o - x_o + b(g)\big)\rangle\big| \leq \|y\| \cdot \big(2 \|x_o\| + \|b(g)\|\big), 
\]
for all $g$ and $s$, which shows that $f$ is left Lipschitz. \\

To establish the existence of $x_o \in \cH$ such that \eqref{eqharmonic}
holds, we argue as follows. Consider the "energy functional"
\[
E(x) = \int_G \big\| \alpha(s) x - x\big\|^2 \, d\mu(s), \quad x \in \cH,
\]
which is well-defined since $\mu$ is finitely supported (but clearly this assumption can
be substantially weakened). One readily checks
that $E$ admits a local minimum $x_o$, and thus
\[
\frac{d}{dt}E(x_o + tv) \Big|_{t = 0} = 0, \quad \forall \, v \in \cH.
\]
The left hand side can be easily calculated. Indeed, after a series of calculations, using the assumptions that $\cH$ is a real Hilbert space and $\mu$ is a symmetric measure on $G$, we arrive at the identities, 
\[
\frac{d}{dt}E(x_o + tv) \Big|_{t = 0} 
= 
4 \cdot
\langle v, \int_G \big( x_o - \alpha(s) x_o\big) \, d\mu(s) \rangle = 0,
\]
for all $v \in \cH$, from which \eqref{eqharmonic} follows. 
\end{proof}

\section{Appendix II: Open problems and remarks}

We collect in this appendix some questions and remarks relating to the topics discussed
in this paper. 

\subsection{Drifts of random walks on homogeneous spaces}
Let $(G,\mu)$ be a countable measured group and denote by $(B,m)$ its Poisson boundary. Clearly, if $H < G$ is a subgroup which acts ergodically on $(B,m)$, then
there are no non-constant \emph{bounded} left $\mu$-harmonic functions on the quotient space $G/H$. However, it certainly also makes sense to ask whether 
\emph{unbounded} left $\mu$-harmonic functions can exist on the quotient space
$G/H$, at least when $H$ has infinite index in the group $G$. 

For instance, in the extreme case when $\mu$
is symmetric and finitely supported such that $(G,\mu)$ is a Liouville group (that is 
to say, $(B,m)$ is just a singleton space) and $H$ is the trivial subgroup, then the 
construction in Appendix I, shows that there are always unbounded left $\mu$-harmonic
functions. 

A less extreme case is suggested by Corollary \ref{CD}, which can be equivalently stated
as the assertion that there are no non-constant bounded left $\mu_o \otimes \cmu_o$-harmonic functions on the quotient $G_o \times G_o/\Delta_2 G_o$ for any measured group $(G_o,\mu_o)$. In this setting, the problem above can be equivalently 
formulated as follows. 

\begin{problem}
Does every measured group $(G,\mu)$ admit a non-trivial bi-$\mu$-harmonic (left and right $\mu$-harmonic) function?
\end{problem}

The problem for general quotient spaces seems intractable, and there could very well be obvious counter-examples. 

\begin{problem}
Construct a countable measured group $(G,\mu)$ and an infinite index subgroup $H < G$ such that the
quotient space $G/H$ does not admit \emph{any} non-constant left (quasi-)$\mu$-harmonic functions whatsoever. 
\end{problem}

It is clear that the notion of drift can be generalized to invariant metrics on more general
$G$-spaces (in particular coset spaces). An affirmative answer to the following question would generalize the Karlsson-Ledrappier Theorem (Theorem \ref{KL}) to this setting. 

\begin{problem}
Let $(G,\mu)$ be a measured group and $H < G$ a subgroup which acts ergodically on 
the Poisson boundary of $(G,\mu)$. If $d$ is a left $G$-invariant metric on the quotient
space $G/H$, is it then true that
\[
\lim_{n \ra \infty} \frac{1}{n} \sum_{k=1}^n \int_G d(gH,H) \, d\mu^{*k}(g) = 0?
\]
\end{problem}

One could start by analyzing the following special case which corresponds to the case when $G = G_o \times G_o$ and $H = \Delta_2 G_o$ and $\mu = \mu_o \otimes \mu_o$, for some countable group $G_o$ and some symmetric measure $\mu_o$ on $G_o$.

\begin{problem}
Let $(G,\mu)$ be a symmetric measured group and suppose there exists a \emph{bi-invariant} (conjugation-invariant) and $\mu$-integrable (semi-)metric $d$ on $G$. Is 
$\ell_d(\mu) = 0$?
\end{problem}

For instance, as a first test case, one could focus on the commutator subgroup $G$ of the free group on two generators and the stable commutator length on $G$.

\subsection{Harmonic Kronecker factors}

Let $G$ be a countable group and $(X,\nu)$ a non-singular ergodic $G$-space. Let 
$\cK$ denote the smallest $G$-invariant sub-$\sigma$-algebra of the Borel $\sigma$-algebra on $X$ with the property that $\cK \times \cK$ contains the $\sigma$-algebra
of all $G$-invariant subsets in $X \times X$. When $\nu$ is $G$-invariant, this $G$-invariant $\sigma$-algebra (or its corresponding factor) is usually called the 
\emph{Kronecker factor}, and it is a classical fact that the factor $G$-space is isomorphic
to an action by rotations of $G$ on a compact homogeneous space. Except for some 
remarks in \cite{AN87}, this factor does not seem to have attracted much attention, and
it seems hard to say anything significant about it in this generality. However, it 
could be that the situation for $(G,\mu)$-spaces is more amenable for a closer analysis.

\begin{problem}
Let $(G,\mu)$ be a symmetric measured group with Poisson boundary 
$(B,m)$ and suppose $(X,\nu)$ is an ergodic  $(G,\mu)$-space which admits 
$(B,m)$ as a factor. Assume that the product of $(X,\nu)$ with itself 
is \emph{not} ergodic. Does this mean that there exists a factor $(Y,\eta)$ 
of $(X,\nu)$ which is a non-trivial isometric extension of $(B,m)$? 
\end{problem}

Put differently, is $(Y,\eta)$ isomorphic (as a $G$-space) to a skew product 
of the form $(B \times K/K_o,m \otimes \eta)$, where $K$ is a compact group
and $K_o$ a closed subgroup and $\eta$ is the Haar probability measure on 
$K/K_o$, such that the $G$-action can be written as
\[
g(b,z) = (gb,c(g,b)z), \quad (b,z) \in B \times K/K_o,
\]
where $c : G \times B \ra K$ is a measurable cocycle?

\section{Acknowledgments}
The author would like to thank Vadim Kaimanovich for encouraging him 
to write up the present collection of random observations. His warm 
thanks also go to Uri Bader, Alex Furman, Yv\'es Guivarc'h, Yair Hartman,
Anders Karlsson, Gady Kozma and Amos Nevo 
for their never-ending willingness to discuss various problems relating to harmonic functions and $(G,\mu)$-spaces. Finally, several key insights in 
this paper are bi-products of on-going collaborations with Alexander Fish
and Tobias Hartnick, and the author wishes to express his gratitude to both of them.

\end{document}